\newtheorem{thm}{Theorem}[section]
\newtheorem{prop}[thm]{Proposition}
\newtheorem{prop-def}{thm}[section]
\theoremstyle{definition}
\newtheorem{defn}[thm]{Definition}
\newtheorem{remark}[thm]{Remark}
\newtheorem{exam}[thm]{Example}
\newcommand{\nc}{\newcommand}
 \nc{\mbibitem}[1]{\bibitem{#1}} 
 \nc{\mrm}[1]{{\rm #1}}
\nc{\pl}{\cdot}
 \nc{\la}{\longrightarrow}
\nc{\ot}{\otimes}\nc{\ad}{{\mathrm{ad}}}
 \nc{\rar}{\rightarrow}
 \nc{\red}{\color{red}}
\nc{\PLA}{{\mathrm{3\text{-}Lie}}}
\nc{\bfk}{{\bf k}}
\nc{\C}{{\mathrm{C}}}
\nc{\RBA}{\mathsf{RB3\text{-}Lie^\lambda}}
\nc{\RBO}{\mathsf{RBO^\lambda}}
\nc{\End}{\mrm{End}}
\nc{\Ext}{\mrm{Ext}}
\nc{\Fil}{\mrm{Fil}}
\nc{\Fr}{\mrm{Fr}}
\nc{\Frob}{\mrm{Frob}}
\nc{\Gal}{\mrm{Gal}}
\nc{\GL}{\mrm{GL}}
\nc{\Hom}{\mrm{Hom}}
\nc{\Hoch}{\mrm{Hoch}}
\nc{\hsr}{\mrm{H}}
\nc{\hpol}{\mrm{HP}}
\nc{\im}{\mrm{im}}
\nc{\id}{\mrm{Id}}
\nc{\Id}{\mrm{Id}}
\nc{\Irr}{\mrm{Irr}}
\nc{\incl}{\mrm{incl}}
\nc{\length}{\mrm{length}}
\nc{\NLSW}{\mrm{NLSW}}
\nc{\Lie}{\mrm{Lie}}
\nc{\mchar}{\rm char}
\nc{\mpart}{\mrm{part}}
\nc{\ql}{{\QQ_\ell}}
\nc{\qp}{{\QQ_p}}
\nc{\rank}{\mrm{rank}}
\nc{\rcot}{\mrm{cot}}
\nc{\rdef}{\mrm{def}}
\nc{\rdiv}{{\rm div}}
\nc{\rmH}{ {\mathrm{H}}}
\nc{\rtf}{{\rm tf}}
\nc{\rtor}{{\rm tor}}
\nc{\res}{\mrm{res}}
\nc{\Sh}{{\mathrm{Sh}}}
\nc{\SL}{\mrm{SL}}
\nc{\Spec}{\mrm{Spec}}
\nc{\sgn}{{\mathrm{sgn}}}
\nc{\tor}{\mrm{tor}}
\nc{\Tr}{\mrm{Tr}}
\nc{\tr}{\mrm{tr}}
\nc{\wt}{\mrm{wt}}
\nc{\op}{\mrm{op}}
\nc{\tLie}{{\mrm{3-Lie}}}
\nc{\BA}{{\mathbb A}}   \nc{\CC}{{\mathbb C}}
\nc{\DD}{{\mathbb D}}   \nc{\EE}{{\mathbb E}}
\nc{\FF}{{\mathbb F}}   \nc{\GG}{{\mathbb G}}
\nc{\HH}{ \mathrm{HH}}   \nc{\LL}{{\mathbb L}}
\nc{\NN}{{\mathbb N}}   \nc{\PP}{{\mathbb P}}
\nc{\QQ}{{\mathbb Q}}   \nc{\RR}{{\mathbb R}}
\nc{\TT}{{\mathbb T}}   \nc{\VV}{{\mathbb V}}
\nc{\ZZ}{{\mathbb Z}}   \nc{\TP}{\widetilde{P}}
\nc{\m}{{\mathbbm m}}
\nc{\cala}{{\mathcal A}}    \nc{\calc}{{\mathcal C}}
\nc{\cald}{\mathcal{D}}     \nc{\cale}{{\mathcal E}}
\nc{\calf}{{\mathcal F}}    \nc{\calg}{{\mathcal G}}
\nc{\calh}{{\mathcal H}}    \nc{\cali}{{\mathcal I}}
\nc{\call}{{\mathcal L}}    \nc{\calm}{{\mathcal M}}
\nc{\caln}{{\mathcal N}}    \nc{\calo}{{\mathcal O}}
\nc{\calp}{{\mathcal P}}    \nc{\calr}{{\mathcal R}}
\nc{\cals}{{\mathcal S}}    \nc{\calt}{{\Omega}}
\nc{\calv}{{\mathcal V}}    \nc{\calw}{{\mathcal W}}
\nc{\calx}{{\mathcal X}}
\nc{\fraka}{{\mathfrak a}}
\nc{\frakb}{\mathfrak{b}}
\nc{\frakg}{{\frak g}}
\nc{\frakl}{{\frak l}}
\nc{\fraks}{{\frak s}}
\nc{\frakm}{{\frak m}}
\nc{\frakM}{{\frak M}}
\nc{\frakp}{{\frak p}}
\nc{\frakW}{{\frak W}}
\nc{\frakX}{{\frak X}}
\nc{\frakS}{{\frak S}}
\nc{\frakA}{{\frak A}}
\nc{\frakC}{{\frak{C}}}
\nc{\frakx}{{\frakx}}
\begin{document}

\title[ Rota-Baxter $3$-Lie algebras of  arbitrary weight]{Deformations and   cohomology theory of  Rota-Baxter $3$-Lie algebras of  arbitrary weights}

\author{Shuangjian Guo, Yufei Qin, $\mbox{Kai\ Wang}^\dagger$  and Guodong Zhou}
\address{Shuangjian Guo, School of Mathematics and Statistics, Guizhou University of Finance and Economics, Guiyang 550025, China}
\email{shuangjianguo@126.com }

\address{Yufei Qin, Kai Wang and Guodong Zhou,
  School of Mathematical Sciences, Shanghai Key laboratory of PMMP,
  East China Normal University,
 Shanghai 200241,
   China}
   \email{290673049@qq.com}
   \email{wangkai@math.ecnu.edu.cn }
\email{gdzhou@math.ecnu.edu.cn}

\date{\today}

\begin{abstract}
 A cohomology theory of  Rota-Baxter $3$-Lie algebras of  arbitrary weights is introduced. Formal deformations, abelian extensions,   skeletal  Rota-Baxter $3$-Lie  2-algebras   and  crossed modules   of  Rota-Baxter 3-Lie algebras   are interpreted by using
 lower degree cohomology groups.  
\end{abstract}

\keywords{abelian extension, cohomology, crossed module,  formal deformation, $3$-Lie algebra,  Rota-Baxter operator, skeletal  Rota-Baxter $3$-Lie  2-algebra of  arbitrary weights}

\subjclass[2010]{
16E40   
16S80   
12H05   
16S70   
}

\maketitle

\let\thefootnote\relax\footnotetext{$\dagger$Corresponding author: Kai Wang, E-mail: wangkai@math.ecnu.edu.cn .}

\tableofcontents

\allowdisplaybreaks

\section*{Introduction}

Rota-Baxter operators on associative algebras were introduced in 1960 by Baxter \cite{Bax60} in his study of the fluctuation theory in probability.  They  have also
been studied in connection with many areas of mathematics and physics, including combinatorics,
number theory, operads and quantum field theory.     Bai, Guo and Ni \cite{Bai07, BGN11, BGN10} introduced  Rota-Baxter operators on   Lie algebras in  order to study the classical Yang-Baxter equations and contribute to the study of integrable systems.    

Algebraic deformation theory originated in the work of Gerstenhaber \cite{Ger63, Ger64} on deformations of associative algebras and this theory is  related to the cohomology theory of associative algebras: Hochschild cohomology \cite{Hoc46}.
Recently,  Tang, Bai, Guo and Sheng \cite{TBGS19} developed deformation theory  and cohomology theory of $\mathcal{O}$-operators (also called relative Rota-Baxter operators of weight zero) on Lie algebras. Later, Lazarev, Sheng and Tang \cite{LST21, LST20} studied deformation theory and cohomology theory of relative Rota-Baxter Lie algebras of weight zero and found applications to triangular Lie bialgebras. They further  determined the $L_{\infty}$-algebra
 that controls deformations of a relative Rota-Baxter Lie algebra and introduced the notion of a homotopy relative Rota-Baxter Lie algebra of weight zero.
  Das and Misha \cite{DM20}  developed the corresponding weight zero cohomology theory for relative Rota-Baxter associative  algebras  and Das also introduced cohomology theory for relative Rota-Baxter associative  algebras and relative Rota-Baxter Lie algebras of arbitrary weights \cite{Das21a, Das21b}.
  Wang and Zhou \cite{WZ21,WZ22} defined cohomology theory for  (absolute) Rota-Baxter associative  algebras of arbitrary weights, determined the  underlying  $L_{\infty}$-algebra and  showed that the dg operad of homotopy Rota-Baxter  associative   algebras is the minimal model of that of Rota-Baxter  associative   algebras.



The concept of $n$-Lie algebras was  introduced by Filippov in \cite{Fil85}, which is closely related to many fields in mathematics and mathematical
physics.  In particular,  3-Lie algebras play an important role to  study  the supersymmetry and gauge symmetry transformations of
the world-volume theory of multiple coincident M2-branes.  Bai, Guo, Li and Wu  \cite{BGLW13} introduced Rota-Baxter $3$-Lie algebras  of  arbitrary weights    and showed
that they can be derived from Rota-Baxter Lie algebras and pre-Lie algebras and from Rota-Baxter commutative associative algebras with derivations.  Bai, Guo and Sheng   \cite{BGS19} introduced  the notion of a relative Rota-Baxter operator (also called an $\mathcal{O}$-operator) on a $3$-Lie algebra with respect to a representation  in the  study of solutions of $3$-Lie classical Yang-Baxter equation. Recently,  Tang, Hou and Sheng \cite{THS21} constructed a Lie 3-algebra, whose Maurer-Cartan elements are exactly relative Rota-Baxter operators on a $3$-Lie algebra,  introduced the cohomology theory of relative Rota-Baxter operators on a $3$-Lie algebra  and  studied deformations of relative Rota-Baxter operators by using their cohomology theory.

It is, therefore,  time to develop a cohomology theory of  (absolute) Rota-Baxter $3$-Lie algebras, by which we study formal deformations and abelian extensions of  Rota-Baxter $3$-Lie algebras.  It needs to be emphasized that
there are results on $3$-Lie algebras in this paper which are not ``parallel" to the case of Lie algebras given in \cite{Das21b}. Because of the complexity  of $3$-Lie algebras, we need some technique to complete the paper.

 It might be inspiring to explain our method. As deformation cohomology  control deformations, we first consider  formal deformations of Rota-Baxter $3$-Lie algebras, whose deformation equations should provide cohomology classes, thus giving hints for the general construction of the cochain complex of deformation cohomology, although some more effort was needed to guess the general form of deformation cohomology.
Although we present the definitions of our   cohomology theory in Section 3, this theory was indeed inspired from   deformation theory displayed  in Section 4.

 In a forthcoming paper, we will determine the $L_\infty$-algebra controlling deformations of Rota-Baxter $3$-Lie algebras, introduce homotopy Rota-Baxter $3$-Lie algebras and  provide  the minimal model of the operad of Rota-Baxter $3$-Lie algebras, thus this paper can be considered the elementary part of our project on   deformation theory and homotopy theory of  Rota-Baxter $3$-Lie algebras. In particular, we will see that the $L_\infty$-algebra gives Maurer-Cartan characterisation of Rota-Baxter $3$-Lie algebras.

It should be emphasized that   relative Rota-Baxter operators and (absolute) Rota-Baxter operators on $3$-Lie algebras are not compatible, so their relations remain to be elaborated \cite{HSZ22}.

\medskip

This paper is organised as follows. In Section 1, we recall some preliminaries about $3$-Lie algebras and their cohomology theory. In Section 2,
we consider  Rota-Baxter $3$-Lie algebras of arbitrary weight and introduce
their representations. We also provide various examples and new constructions.  In Section 3,  we  define a cohomology theory for  Rota-Baxter 3-Lie algebras. In Section 4,  we justify this cohomology theory by interpreting lower degree cohomology groups as formal deformations. In Section 5, we study  abelian extensions of
 Rota-Baxter 3-Lie algebras.   Section 6 is devoted to cohomological study of skeletal  Rota-Baxter $3$-Lie  2-algebras  and  crossed modules.

Throughout this paper, let $\bfk$ be a field of characteristic $0$.  Except specially stated,  vector spaces are  $\bfk$-vector spaces and  all    tensor products are taken over $\bfk$.

\bigskip

\section{Preliminaries}

We start with the background of 3-Lie algebras and their cohomology that we refer the reader to~\cite{Fil85, Kas87, Tak95, Rot05, AI10} for more details.

\begin{defn}(\cite{Fil85})
A {\bf 3-Lie algebra} is a vector space $\mathfrak{g}$  together with a skew-symmetric linear map
$[\cdot, \cdot, \cdot]_\mathfrak{g}: \wedge^3\mathfrak{g}\rightarrow \mathfrak{g}$ such that, for
 $x_i\in \mathfrak{g}, 1\leq i\leq 5$, the following {\bf Fundamental Identity} holds:
$$
  [x_1,x_2, [x_3,x_4, x_5]_\mathfrak{g}]_\mathfrak{g} = [[x_1,x_2, x_3]_\mathfrak{g},x_4, x_5]_\mathfrak{g}+[x_3,[x_1, x_2, x_4]_\mathfrak{g}, x_5]_\mathfrak{g}+[x_3,x_4, [x_1,x_2, x_5]_\mathfrak{g}]_\mathfrak{g}.
$$
\end{defn}

\smallskip
For $x_1, x_2\in \mathfrak{g}$, define $\ad_{x_1, x_2}\in \mathfrak{gl}(\mathfrak{g})$ by
\begin{align*}
\ad_{x_1, x_2} (x):=[x_1, x_2, x]_\mathfrak{g}.
\end{align*}
Then $\ad_{x_1, x_2}$ is a derivation for the bracket $[-,-,-]_\frakg$.

\begin{defn}(\cite{Kas87})
A {\bf representation} of a 3-Lie algebra $(\mathfrak{g}, [\cdot, \cdot, \cdot]_\mathfrak{g})$  on a vector space $M$ is a linear map: $\rho :\wedge^2 \mathfrak{g}\rightarrow \mathfrak{gl}(M)$, such that for all $x_1, x_2, x_3, x_4\in \mathfrak{g}$, the following equalities hold:
\begin{align*}
\rho(x_1, x_2)\circ\rho(x_3, x_4)&=\rho([x_1, x_2, x_3]_\mathfrak{g}, x_4)+\rho(x_3, [x_1, x_2, x_4]_\mathfrak{g})+\rho(x_3, x_4)\circ \rho(x_1, x_2),\\
\rho(x_1, [x_2, x_3, x_4]_\mathfrak{g})&=\rho(x_3, x_4)\circ \rho(x_1, x_2)-\rho(x_2, x_4)\circ \rho(x_1, x_3)+\rho(x_2, x_3)\circ \rho(x_1, x_4).
\end{align*}
\end{defn}

Let $(\mathfrak{g}, [\cdot, \cdot, \cdot]_\mathfrak{g})$ be a $3$-Lie algebra. The linear map $\ad:\mathfrak{g}\wedge \mathfrak{g}\rightarrow \mathfrak{gl}(\mathfrak{g})$ defines a representation of $(\mathfrak{g}, [\cdot, \cdot, \cdot]_\mathfrak{g})$ on itself, which is called the {\bf adjoint representation}.

Following \cite{AI10}, we recall the cohomology theory of $3$-Lie algebras.
Let $(M, \rho)$  be a representation of a $3$-Lie algebra $(\mathfrak{g}, [\cdot, \cdot, \cdot]_\mathfrak{g})$.  The  cochain complex of  $\mathfrak{g}$ with coefficients in $M$ is
 the  complex $\{ C^*_{\tLie}(\mathfrak{g}, M), \delta^* \}$, where $C^0_{\tLie} (\mathfrak{g}, M) =M$ and
\begin{eqnarray*}
C^n_{\tLie} (\mathfrak{g}, M) =\mbox{Hom}( \underbrace{\wedge^2\mathfrak{g}\otimes \cdots \otimes \wedge^2\mathfrak{g}}_{n-1}\wedge \mathfrak{g}, M ),  \forall n \geq 1,
\end{eqnarray*}
 and the coboundary operator $\delta^n:C^n_{\tLie}(\mathfrak{g}, M ) \rightarrow C^{n+1}_{\tLie}(\mathfrak{g}, M)$ is  given by
\begin{eqnarray*}
&&(\delta^n f)(\mathfrak{X}_1, \dots,  \mathfrak{X}_n, x_{n+1})\\
&=&\sum_{1\leq j< k\leq n}(-1)^{j}f(\mathfrak{X}_1, \dots, \mathfrak{\hat{X}}_j, \dots, \mathfrak{X}_{k-1}, [x_j, y_j, x_k]_\mathfrak{g}\wedge y_k
 + x_k\wedge [x_j, y_j, y_k]_\mathfrak{g}, \mathfrak{X}_{k+1}, \dots, \mathfrak{X}_{n}, x_{n+1})\\
&& +\sum^{n}_{j=1}(-1)^{j}f(\mathfrak{X}_1, \dots, \mathfrak{\hat{X}}_j, \dots, \mathfrak{X}_{n}, [x_j, y_j, x_{n+1}]_\mathfrak{g})\\
&&+ \sum^{n}_{j=1}(-1)^{j-1}\rho(x_j, y_j)f(\mathfrak{X}_1, \dots, \mathfrak{\hat{X}}_j, \dots, \mathfrak{X}_{n}, x_{n+1})\\
&&+ (-1)^{n+1}(\rho(y_n, x_{n+1})f(\mathfrak{X}_1,  \dots, \mathfrak{X}_{n-1}, x_{n})+\rho(x_{n+1}, x_n)f(\mathfrak{X}_1,  \dots, \mathfrak{X}_{n-1}, y_{n})),
\end{eqnarray*}
for $\mathfrak{X}_i=x_i\wedge y_i\in \wedge^2\mathfrak{g}, i=1, \dots,  n$ and $x_{n+1}\in \mathfrak{g}$, where $\mathfrak{\hat{X}}_j$ means that this element is deleted. The corresponding cohomology groups are called the  {\bf cohomology} of $\mathfrak{g}$ with coefficients in $M$, denoted by $\rmH^\bullet_{\mathrm{3}-\mathrm{Lie}}(\mathfrak{g}, M).$

 \bigskip

\section{Representations of  Rota-Baxter $3$-Lie algebras}

In  this section,    we consider  Rota-Baxter $3$-Lie algebras of arbitrary weight  and introduce their representations.
We also provide various examples and new constructions.

\begin{defn} (\cite{BGLW13})\label{Def: Rota-Baxter 3-Lie algebra} Let $\lambda\in \bfk$.
Given  a $3$-Lie algebra  $(\frakg,  [\cdot, \cdot, \cdot]_\frakg)$, it is called  a  \textbf{Rota-Baxter $3$-Lie algebra of  weight $\lambda$},  if there is a linear
operator $T: \frakg \rightarrow \frakg $ subjecting to
\begin{eqnarray}\label{Eq: Rota-Baxter relation}
[T(x),  T(y), T(z)]_\frakg&=&T\big([T(x), T(y), z]_\frakg+[T(x), y, T(z)]_\frakg+[x, T(y), T(z)]_\frakg\nonumber\\
&&+\lambda [T(x), y, z]_\frakg+\lambda [x, T(y), z]_\frakg+\lambda [x, y, T(z)]_\frakg\\
&&+\lambda^2 [x, y, z]_\frakg\big)\nonumber\end{eqnarray}	
for arbitrary $x, y, z \in \frakg $.

 A  Rota-Baxter $3$-Lie algebra of   weight $\lambda$ is a triple $(\frakg, [\cdot, \cdot, \cdot]_\frakg,  T)$ consisting of a $3$-Lie algebra $(\frakg,  [\cdot, \cdot, \cdot]_\frakg)$ together with a
 Rota-Baxter operator of  weight $\lambda$ on it.
\end{defn}
\begin{exam}
    For arbitrary $3$-Lie algebra $(\frakg,  [\cdot, \cdot, \cdot]_\frakg)$, the triple $(\frakg, [\cdot, \cdot, \cdot]_\frakg,  \id_\frakg)$ is a  Rota-Baxter $3$-Lie algebra of  weight $-1$.
\end{exam}

\begin{exam}
 Let $(\frakg, [\cdot, \cdot, \cdot]_\frakg,  T)$ be a  Rota-Baxter $3$-Lie algebra of  weight $\lambda$.
\begin{itemize}
\item[(a)]   For arbitrary $\lambda'\in \bfk$, the triple $(\frakg, [\cdot, \cdot, \cdot]_\frakg,  \lambda' T)$ is a Rota-Baxter $3$-Lie algebra of  weight $\lambda\cdot\lambda'$.

\item[(b)]    The triple  $(\frakg, [\cdot, \cdot, \cdot]_\frakg,  -\lambda\  \id_\frakg-T)$ is a  Rota-Baxter $3$-Lie algebra of  weight $\lambda$.

\item[(c)]   Given an automorphism $\psi\in$ Aut$(\frakg)$ of the $3$-Lie
algebra $\frakg$, the triple $(\frakg, [\cdot, \cdot, \cdot]_\frakg, \psi^{-1} \circ T \circ \psi)$ is a  Rota-Baxter $3$-Lie algebra of  weight $\lambda$.
\end{itemize}
\end{exam}

\begin{defn}
Let $(\frakg, [\cdot, \cdot, \cdot]_\frakg,  T)$ and $(\frakg', [\cdot, \cdot, \cdot]_{\frakg'},  T')$ be two  Rota-Baxter $3$-Lie algebras of  weight $\lambda$. A morphism from
$(\frakg, [\cdot, \cdot, \cdot]_\frakg,  T)$ to $(\frakg', [\cdot, \cdot, \cdot]_{\frakg'},  T')$ is a $3$-Lie algebra homomorphism $\phi: \frakg\rightarrow \frakg'$  satisfying  $\phi \circ T=T'\circ \phi$.

\end{defn}

Denote by $\RBA$ the category of  Rota-Baxter $3$-Lie algebras of  weight $\lambda$ with morphisms defined above.


\begin{defn}\label{Def: Rota-Baxter representations} Let $(\frakg, [\cdot, \cdot, \cdot]_\frakg, T)$ be a   Rota-Baxter $3$-Lie algebra of  weight $\lambda$ and $(M, \rho)$ be a representation over the  $3$-Lie
algebra $(\frakg, [\cdot, \cdot, \cdot]_\frakg)$. We say that $(M, \rho, T_M)$ is a \textbf{representation} over the  Rota-Baxter $3$-Lie algebra  $(\frakg, [\cdot, \cdot, \cdot]_\frakg, T)$ of  weight $\lambda$ if $(M, \rho)$ is endowed with a linear operator $T_M : M \rightarrow M$ such that the following equation
\begin{eqnarray*}\label{Def: Rota-Baxter representations}
\rho\big(T(x), T(y))(T_M(m)\big)& =& T_M\Big(\rho(T(x), T(y)) (m) +\rho(T(x), y)(T_M(m)) +\rho(x, T(y))(T_M(m)) \\
&&+  \lambda \rho(T(x), y) (m)+\lambda \rho(x, T(y)) (m)+\lambda \rho(x, y) (T_M(m))\\
&&+\lambda^2\rho(x, y)(m)\Big)
\end{eqnarray*}
holds for any  $x, y \in \frakg$ and $m \in M$.

\end{defn}
\begin{exam}
It is obvious  that  $(\frakg ,[\cdot, \cdot, \cdot]_\frakg, T)$ itself is a representation over the  Rota-Baxter $3$-Lie algebra  $(\frakg ,[\cdot, \cdot, \cdot]_\frakg, T)$ of  weight $\lambda$, called the regular representation or the adjoint representation.
\end{exam}
\begin{exam}
Let $(\frakg ,[\cdot, \cdot, \cdot]_\frakg)$ be a $3$-Lie algebra and $(M, \rho)$  be a representation over it. Then the triple $(M, \rho, \id_M)$ is a
representation of the  Rota-Baxter $3$-Lie algebra  $(\frakg ,[\cdot, \cdot, \cdot]_\frakg, \id_\frakg)$ of  weight $-1$.
\end{exam}

\begin{exam}
Let $(\frakg ,[\cdot, \cdot, \cdot]_\frakg, T)$ be a  Rota-Baxter  $3$-Lie algebra of weight $\lambda$ and $(M, \rho,T_m)$  be a representation over it. Then for arbitrary scalar $\mu\in \bfk$, the triple $(M, \rho, \mu T_M)$ is a representation of the  Rota-Baxter $3$-Lie algebra  $(\frakg ,[\cdot, \cdot, \cdot]_\frakg, \mu T)$ of  weight $(\mu \lambda)$.
\end{exam}


\begin{prop}
Let $(\frakg ,[\cdot, \cdot, \cdot]_\frakg, T)$ be a   Rota-Baxter $3$-Lie algebra of  weight $\lambda$ and $\{(M_i, \rho_i, T_{M_i})\}_{i\in I}$  be a  family of representation of it. Then  the triple $(\oplus_{i\in I}M_i,  (\rho_i)_{i\in I}, \oplus_{i\in I}T_{M_i})$ is a representation of the  Rota-Baxter $3$-Lie algebra  $(\frakg ,[\cdot, \cdot, \cdot]_\frakg, T)$ of weight $\lambda$.
\end{prop}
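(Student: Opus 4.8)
The plan is to reduce every defining condition to its component-wise analogue, exploiting the fact that all structure maps on a direct sum are defined coordinate by coordinate. First I would fix notation: write $M := \oplus_{i\in I} M_i$, let $\rho := (\rho_i)_{i\in I}$ denote the diagonal action $\rho(x_1, x_2)\big((m_i)_{i\in I}\big) = \big(\rho_i(x_1, x_2)(m_i)\big)_{i\in I}$, and let $T_M := \oplus_{i\in I} T_{M_i}$ act by $T_M\big((m_i)_{i\in I}\big) = \big(T_{M_i}(m_i)\big)_{i\in I}$. The governing observation is that every operation appearing in the two representation axioms and in the Rota-Baxter compatibility equation, namely addition, composition of endomorphisms, scalar multiplication by $\lambda$ and $\lambda^2$, and the applications of $\rho$ and of $T_M$, is performed independently in each coordinate of $M$, and that an equality of elements of $M$ is equivalent to equality in every component.

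Next I would verify that $(M, \rho)$ is a representation of the $3$-Lie algebra $(\frakg, [\cdot, \cdot, \cdot]_\frakg)$. Evaluating each of the two identities in the definition of a $3$-Lie representation on an arbitrary element $(m_i)_{i\in I}$ and projecting onto the $i$-th coordinate yields precisely the corresponding identity for $(M_i, \rho_i)$, which holds by hypothesis. Since the projections are jointly injective, both axioms follow for $(M, \rho)$.

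Finally I would check the Rota-Baxter compatibility of $T_M$ with $\rho$. Applying both sides of the defining compatibility equation for a representation over a Rota-Baxter $3$-Lie algebra to an element $(m_i)_{i\in I}$ and reading off the $i$-th coordinate, every one of the seven terms collapses to the corresponding term for $(M_i, \rho_i, T_{M_i})$; that equation holds in each component by assumption, hence in $M$. This completes the verification.

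There is no genuine obstacle here: the whole argument is the routine remark that a direct sum of representations is computed coordinate-wise and that each operation involved is additive and coordinate-preserving. The only point requiring minor care is the bookkeeping when $I$ is infinite, where one takes the algebraic direct sum so that every element has only finitely many nonzero components; since the structure maps act independently in each coordinate, finiteness is preserved and the verification is unaffected.
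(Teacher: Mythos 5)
Your proof is correct and takes essentially the same route as the paper's: both verify the defining Rota-Baxter compatibility equation coordinate-by-coordinate on $\oplus_{i\in I}M_i$, using that $\rho$ and $T_M$ act diagonally and that equality in the direct sum is equality in each component. Your version is in fact slightly more complete, since you also spell out the verification of the two underlying $3$-Lie representation axioms, which the paper's proof leaves implicit.
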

\begin{proof}
For arbitrary  $x, y\in \frakg$ and $m=(m_i)_{i\in I}\in \oplus_{i\in I}M_i$, we have
\begin{eqnarray*}
&& (\rho_i)_{i\in I}(T(x), T(y))(\oplus_{i\in I}T_{M_i})(m_i)_{i\in I}\\
&=& \left( \rho_i(T(x), T(y))(T_{M_i})(m_i)\right) _{i\in I}\\
&=& \big(T_{M_i}(\rho_i(T(x), T(y)) (m) +\rho_i(T(x), y)(T_M(m)) +\rho_i(x, T(y))(T_M(m)) \nonumber\\
&&+  \lambda \ \rho_i(T(x), y) (m)+\lambda \ \rho_i(x, T(y))( m)+\lambda \ \rho_i(x, y) (T_M(m))+\lambda^2\rho_i(x, y)(m))\big)_{i\in I}\\
&=& (\oplus_{i\in I}T_{M_i})\big((\rho_i)_{i\in I}(T(x), T(y))( m) +(\rho_i)_{i\in I}(T(x), y)(T_M(m)) +(\rho_i)_{i\in I}(x, T(y))(T_M(m)) \nonumber\\
&&+  \lambda \ (\rho_i)_{i\in I}(T(x), y)( m)+\lambda \ (\rho_i)_{i\in I}(x, T(y)) (m)+\lambda \ (\rho_i)_{i\in I}(x, y) (T_M(m))\\
&&+\lambda^2(\rho_i)_{i\in I}(x, y)(m)\big).
\end{eqnarray*}
Hence, $(\oplus_{i\in I}M_i,  (\rho_i)_{i\in I}, \oplus_{i\in I}T_{M_i})$ is a representation of the  Rota-Baxter $3$-Lie algebra  $(\frakg ,[\cdot, \cdot, \cdot]_\frakg, T)$ of  weight $\lambda$.
\end{proof}

In the following, we construct the semidirect product in the context of  Rota-Baxter $3$-Lie
algebras of  weight $\lambda$.

\begin{prop}
Let $(\frakg ,[\cdot, \cdot, \cdot]_\frakg, T)$ be a   Rota-Baxter $3$-Lie algebra of  weight $\lambda$ and $(M, \rho, T_M)$  be a representation of it. Then $(\frakg\oplus M, T\oplus T_M )$ is a Rota-Baxter $3$-Lie algebra of  weight $\lambda$, where the Lie bracket on $\frakg\oplus M$ is given
by the semidirect product
\begin{eqnarray*}
 [x+u, y+v, z+w]_{\frakg\oplus M}&=&[x, y, z]_\frakg +\rho(x, y)w+\rho(y, z)u+\rho(z, x)v,
\end{eqnarray*}
for arbitrary   $x, y, z \in \frakg$ and $u, v, w \in M$.
\end{prop}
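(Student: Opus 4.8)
The plan is to check the two defining ingredients separately: that $(\frakg\oplus M, [\cdot,\cdot,\cdot]_{\frakg\oplus M})$ is a $3$-Lie algebra, and that $\tilde T:=T\oplus T_M$ is a Rota-Baxter operator of weight $\lambda$ on it, i.e. satisfies \eqref{Eq: Rota-Baxter relation}. For the $3$-Lie algebra structure, skew-symmetry of the bracket follows at once from the skew-symmetry of $[\cdot,\cdot,\cdot]_\frakg$ together with the skew-symmetry of $\rho\colon\wedge^2\frakg\to\mathfrak{gl}(M)$ in its two arguments; swapping any two of the three slots produces exactly one overall sign. The Fundamental Identity then decomposes along the direct sum: its $\frakg$-component is the Fundamental Identity of $\frakg$, while its $M$-component is controlled by the two representation axioms of a $3$-Lie algebra. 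This is the standard semidirect product construction, so I would either cite it or dispatch the (mechanical) $M$-component check to the representation axioms.

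The heart of the matter is the Rota-Baxter relation for $\tilde T$. Since both the bracket and $\tilde T$ respect the decomposition $\frakg\oplus M$, I would verify \eqref{Eq: Rota-Baxter relation} componentwise. The $\frakg$-component involves only $[\cdot,\cdot,\cdot]_\frakg$ and $T$, so it is literally \eqref{Eq: Rota-Baxter relation} for $(\frakg,T)$ and holds by hypothesis. For the $M$-component I would substitute $X=x+u$, $Y=y+v$, $Z=z+w$ into both sides, expand each of the seven brackets on the right according to the rule $[a,b,c]_{\frakg\oplus M}=[a,b,c]_\frakg+\rho(a,b)(\cdot)+\rho(b,c)(\cdot)+\rho(c,a)(\cdot)$, and then sort the resulting terms $\rho(\cdot,\cdot)(\cdot)$ according to which ``passenger'' $u$, $v$ or $w$ they act on. The claim is that the group of terms acting on $w$ is precisely the defining equation of Definition \ref{Def: Rota-Baxter representations} for the pair $(x,y)$, the group acting on $u$ is that equation for the pair $(y,z)$, and the group acting on $v$ is that equation for the pair $(z,x)$; since the representation equation holds for every pair, each group matches and the $M$-component follows.

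The main obstacle is purely organisational: in expanding the right-hand side of \eqref{Eq: Rota-Baxter relation} one must keep track, for each of its seven bracket terms, of which slot carries $T$ (respectively $T_M$) and which carries the identity, and then route every monomial into the correct passenger group without dropping a weight factor $\lambda$ or $\lambda^2$. To avoid tripling the work I would exploit the cyclic invariance of the bracket, $[X,Y,Z]_{\frakg\oplus M}=[Y,Z,X]_{\frakg\oplus M}$ (a $3$-cycle being even), which shows that the $u$- and $v$-groups are obtained from the $w$-group by the cyclic relabelling $(x,y,z,u,v,w)\mapsto(y,z,x,v,w,u)$; it therefore suffices to match the single $w$-group against Definition \ref{Def: Rota-Baxter representations}, the other two following formally.
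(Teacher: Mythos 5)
Your proposal is correct and takes essentially the same approach as the paper: expand the Rota-Baxter relation on $\frakg\oplus M$ componentwise, identify the $\frakg$-component with the Rota-Baxter relation for $T$, and match the three groups of $\rho$-terms acting on $w$, $u$, $v$ with the defining equation of a Rota-Baxter representation for the pairs $(x,y)$, $(y,z)$, $(z,x)$. The only differences are cosmetic: the paper writes out all three passenger groups explicitly and takes the semidirect-product $3$-Lie structure for granted, while you verify that structure and use cyclic invariance to reduce the bookkeeping to a single group.
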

\begin{proof}
For arbitrary   $x, y, z \in \frakg$ and $u, v, w \in M$, we have
\begin{eqnarray*}
&& [(T\oplus T_M)(x+u), (T\oplus T_M)(y+v), (T\oplus T_M)(z+w)]_{\frakg\oplus M}\\
&=& [T(x), T(y), T(z)]_\frakg +\rho(T(x), T(y))T_M(w)+\rho(T(y), T(z))T_M(u)+\rho(T(z), T(x))T_M(v)\\
&=&T\big([T(x), T(y), z]_\frakg+[T(x), y, T(z)]_\frakg+[x, T(y), T(z)]_\frakg+\lambda \ [T(x), y, z]_\frakg\\
&&+\lambda \ [x, T(y), z]_\frakg+\lambda \ [x, y, T(z)]_\frakg+\lambda^2 [x, y, z]_\frakg\big)\\
&&+T_M\big(\rho(T(x), T(y)) w+\rho(T(x), y)T_M(w) +\rho(x, T(y))(T_M(w)) +  \lambda \ \rho(T(x), y)( w)\\
&&+\lambda \ \rho(x, T(y))(w) +\lambda \ \rho(x, y) (T_M(w))+\lambda^2\rho(x, y)(w)\big)\\
&&+ T_M\big(\rho(T(y), T(z))(u) +\rho(T(y), z)(T_M(u)) +\rho(y, T(z))(T_M(u))+  \lambda \ \rho(T(y), z) (u)\\
&&+\lambda \ \rho(y, T(z))(u)+\lambda \ \rho(y, z) (T_M(u))+\lambda^2\rho(y, z)(u)\big)\\
&&+ T_M\big(\rho(T(z), T(x)) (v) +\rho(T(z), x)T_M(v) +\rho(z, T(x))(T_M(v))\\
&&+  \lambda \ \rho(T(z), x)(v)+\lambda \ \rho(z, T(x))( v)+\lambda \ \rho(z, x)( T_M(v))+\lambda^2\rho(z, x)(v)\big)\\
&=& (T\oplus T_M)\big([(T\oplus T_M)(x+u), (T\oplus T_M)(y+v), z+w]_{\frakg\oplus M}\\
&&+[(T\oplus T_M)(x+u), y+v, (T\oplus T_M)(z+w)]_{\frakg\oplus M}+[x+u, (T\oplus T_M)(y+v), (T\oplus T_M)(z+w)]_{\frakg\oplus M}\\
&&+\lambda \ [(T\oplus T_M)(x+u), y+v, z+w]_{\frakg\oplus M}+\lambda \ [x+u, (T\oplus T_M)(y+v), z+w]_{\frakg\oplus M}\\
&&+\lambda \ [x+u, y+v, (T\oplus T_M)(z+w)]_{\frakg\oplus M}+\lambda^2 [x+u, y+v, z+w]_{\frakg\oplus M}\big).
\end{eqnarray*}
This shows that  $T\oplus T_M$ is a  Rota-Baxter operator  of  weight $\lambda$ on the semidirect product $3$-Lie algebra. Hence
the result follows.
\end{proof}

\begin{prop} (\cite{BGLW13})\label{Prop: new RB 3-Lie algebra}
	Let $(\frakg, [\cdot, \cdot, \cdot]_\frakg, T)$ be a  Rota-Baxter $3$-Lie algebra of  weight $\lambda$. Define a new trinary operation as:
\begin{eqnarray*}
[x, y, z]_T:&=&[T(x), T(y), z]_\frakg+[T(x), y, T(z)]_\frakg+[x, T(y), T(z)]_\frakg\\
&&+\lambda \ [T(x), y, z]_\frakg+\lambda \ [x, T(y), z]_\frakg+\lambda \ [x, y, T(z)]_\frakg+\lambda^2 [x, y, z]_\frakg,
\end{eqnarray*}
for arbitrary  $x, y, z\in \frakg $. Then
\begin{itemize}

\item[(a)]      $(\frakg ,[\cdot, \cdot, \cdot]_T )$ is a new  $3$-Lie algebra, we denote this $3$-Lie algebra by $\frakg_T$;

    \item[(b)] the triple  $(\frakg_T ,[\cdot, \cdot, \cdot]_T, T)$ is a  Rota-Baxter $3$-Lie algebra of  weight $\lambda$;

\item[(c)] the map $T:(\frakg_T ,[\cdot, \cdot, \cdot]_T, T)\rightarrow (\frakg,[\cdot, \cdot, \cdot]_\frakg, T)$ is a  morphism of  Rota-Baxter $3$-Lie algebras of  weight $\lambda$.
    \end{itemize}
	\end{prop}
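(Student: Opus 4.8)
The plan is to verify the three claims in turn, with part (a) being the substantive one and parts (b), (c) following relatively quickly. For part (a), I need to show that the new bracket $[\cdot,\cdot,\cdot]_T$ is a $3$-Lie algebra structure. Skew-symmetry is immediate: each of the seven terms in the definition of $[x,y,z]_T$ is built from the skew-symmetric bracket $[\cdot,\cdot,\cdot]_\frakg$, and the way the arguments $x,y,z$ carry (or do not carry) an application of $T$ is symmetric under permutation up to sign. More precisely, applying a permutation $\sigma\in S_3$ to $(x,y,z)$ permutes the seven terms among themselves while introducing the sign $\sgn(\sigma)$ from the underlying bracket, so $[\cdot,\cdot,\cdot]_T$ is totally skew-symmetric. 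The real work is checking the Fundamental Identity for $[\cdot,\cdot,\cdot]_T$.

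The hard part will be the verification of the Fundamental Identity for the twisted bracket. The strategy I would adopt is to expand $[x_1,x_2,[x_3,x_4,x_5]_T]_T$ and each of the three terms on the right-hand side using the definition, producing a large number of terms of the form $[T^{a}(x_{i_1}), T^{b}(x_{i_2}), T^{c}(x_{i_3})]_\frakg$ weighted by appropriate powers of $\lambda$. One cannot just blindly expand, because the inner bracket $[x_3,x_4,x_5]_T$ itself has seven terms each carrying various $T$'s, and then the outer bracket applies $T$ to some slots; crucially, whenever $T$ lands on an argument that is already of the form $[T(\cdot),T(\cdot),\cdot]_\frakg$ etc., one must invoke the Rota-Baxter relation \eqref{Eq: Rota-Baxter relation} to rewrite $T$ of a bracket as a bracket of $T$'s plus lower-order $\lambda$ corrections. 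The cleanest way to organise this is to observe that the definition of $[\cdot,\cdot,\cdot]_T$ is exactly the expression $T$ is asked to reproduce in \eqref{Eq: Rota-Baxter relation}; that is, the Rota-Baxter axiom says precisely $[T(x),T(y),T(z)]_\frakg = T([x,y,z]_T)$. This single reformulation is the key lever: it lets me replace every occurrence of a twisted bracket on an argument that sits inside $T$, and it ties the computation directly to the Fundamental Identity for $[\cdot,\cdot,\cdot]_\frakg$.

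Concretely, I would proceed as follows. Using $T([x,y,z]_T)=[T(x),T(y),T(z)]_\frakg$, I first establish the auxiliary identity that for all $u,v,w$,
\[
[u,v,[T(x),T(y),T(z)]_\frakg]_T = [u,v,T([x,y,z]_T)]_T,
\]
and similarly for the other slots, which converts the nested twisted brackets into expressions where the innermost operation is the genuine $3$-Lie bracket on $\frakg$. After this reduction, both sides of the Fundamental Identity for $[\cdot,\cdot,\cdot]_T$ become $\bfk$-linear combinations, with coefficients that are monomials in $\lambda$, of iterated genuine brackets $[\,\cdot\,,[\,\cdot\,,\cdot\,]_\frakg]_\frakg$ in which the various $x_i$ appear with $0$, $1$, $2$, or $3$ applications of $T$. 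For each fixed placement pattern of the $T$'s and each power of $\lambda$, the corresponding terms are governed by the Fundamental Identity already known to hold for $[\cdot,\cdot,\cdot]_\frakg$ (applied to the arguments $T^{a}(x_i)$), so they cancel pattern-by-pattern. The main obstacle is purely bookkeeping: matching up the dozens of resulting terms and confirming that the $\lambda$-weights agree across corresponding placements. I expect this to reduce, after collecting by the number of $T$'s, to finitely many instances of the Fundamental Identity for $\frakg$, each multiplied by a common power of $\lambda$, and hence to vanish.

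Once part (a) is in hand, part (b) is immediate: by part (a), $(\frakg_T,[\cdot,\cdot,\cdot]_T)$ is a $3$-Lie algebra, and the statement that $T$ is a Rota-Baxter operator of weight $\lambda$ on it amounts to verifying \eqref{Eq: Rota-Baxter relation} with the bracket $[\cdot,\cdot,\cdot]_T$ in place of $[\cdot,\cdot,\cdot]_\frakg$; this again follows by expanding both sides via the reformulated axiom $[T(x),T(y),T(z)]_\frakg=T([x,y,z]_T)$ and comparing, the computation being of the same flavour as but simpler than that of part (a). For part (c), I must check that the linear map $T\colon\frakg_T\to\frakg$ is simultaneously a $3$-Lie algebra homomorphism and commutes with the Rota-Baxter operators. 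The homomorphism property $T([x,y,z]_T)=[T(x),T(y),T(z)]_\frakg$ is exactly the reformulated Rota-Baxter axiom \eqref{Eq: Rota-Baxter relation}, and the compatibility $T\circ T = T\circ T$ with the operators (which are both named $T$) holds trivially, so $T$ is a morphism in $\RBA$ and the proposition is proved.
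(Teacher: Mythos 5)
Your proposal is correct, and where the paper actually argues, you argue the same way: the single lever you identify, namely that the Rota--Baxter axiom \eqref{Eq: Rota-Baxter relation} is exactly the statement $T([x,y,z]_T)=[T(x),T(y),T(z)]_\frakg$, is precisely what the paper uses for parts (b) and (c) (in (b) each of the seven terms of $[T(x),T(y),T(z)]_T$, e.g.\ $[T^2(x),T^2(y),T(z)]_\frakg$, is recognized as $T$ of a twisted bracket; in (c) the reformulation is the homomorphism property itself). The one place you differ is part (a): the paper gives no argument at all (``one can show it directly by a tedious computation,'' deferring to \cite{BGLW13}), whereas you supply a genuine strategy, and it is sound --- expanding both sides of the Fundamental Identity for $[\cdot,\cdot,\cdot]_T$, replacing $T(\text{inner twisted bracket})$ by a genuine bracket of $T$'s via the reformulated axiom, and grouping by the placement pattern of $T$'s does reduce the identity to one instance of the Fundamental Identity of $\frakg$ per pattern (each admissible pattern occurs exactly once on each side, weighted by $\lambda^{4-\#T}$, and the all-$T$ pattern never occurs). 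Two small inaccuracies in your write-up, neither fatal: in part (a) each argument $x_i$ carries at most \emph{one} application of $T$ after the reduction, not ``$0$, $1$, $2$, or $3$'' (powers $T^2$ only show up in part (b)); and your ``auxiliary identity'' is vacuous as stated (it is substitution of equals) --- the substantive step is replacing $T(w)$ by $[T(x_3),T(x_4),T(x_5)]_\frakg$ wherever the operator lands on the inner bracket $w=[x_3,x_4,x_5]_T$, which is what you in fact do.
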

\begin{proof}
(a)  On can show it directly by a tedious computation;

(b) We observe that
\begin{eqnarray*}
&&[T(x), T(y), T(z)]_T\\
&=&[T^2(x), T^2(y), T(z)]_\frakg+[T^2(x), T(y), T^2(z)]_\frakg+[T(x), T^2(y), T^2(z)]_\frakg\\
&&+\lambda \ [T^2(x), T(y), T(z)]_\frakg+\lambda \ [T(x), T^2(y), T(z)]_\frakg+\lambda \ [T(x), T(y), T^2(z)]_\frakg+\lambda^2 [T(x), T(y), T(z)]_\frakg\\
&=& T\Big([T(x), T(y), z]_T+[T(x), y, T(z)]_T+[x, T(y), T(z)]_T\\
&&+\lambda \ [T(x), y, z]_T+\lambda \ [x, T(y), z]_T+\lambda \ [x, y, T(z)]_T+\lambda^2 [x, y, z]_T\Big),
\end{eqnarray*}
which shows that $T$ is a  Rota-Baxter operator of   weight $\lambda$ on the $3$-Lie algebra $\frakg_T$.

(c) Since $T$ is a  Rota-Baxter operator of   weight $\lambda$ on $\frakg$, it follows from Equation~\eqref{Eq: Rota-Baxter relation} that
\begin{eqnarray*}
T([x, y, z]_T)=[T(x), T(y), T(z)]_\frakg, ~~\mbox{for}~~ x, y, z\in \frakg.
\end{eqnarray*}
This implies that $T:(\frakg_T ,[\cdot, \cdot, \cdot]_T, T)\rightarrow (\frakg,[\cdot, \cdot, \cdot]_\frakg, T)$ is a  morphism of  Rota-Baxter $3$-Lie algebra of  weight $\lambda$.
\end{proof}

In the following, we will introduce new Rota-Baxter representations that will be useful in the next section to construct
the cohomology of  Rota-Baxter $3$-Lie algebras of  arbitrary weights.

\begin{thm}\label{Prop: new-representation}
Let $(\frakg ,[\cdot, \cdot, \cdot]_\frakg, T)$ be a  Rota-Baxter $3$-Lie algebra of weight $\lambda$ and $(M, \rho, T_M)$  be a representation of it. Define a map $\rho_T:\wedge^2 \frakg \rightarrow$ End$(M)$ by
\begin{eqnarray*}
\rho_T(x,y)m:=\rho\big(T(x), T(y)\big) m-T_M\big(\rho(T(x), y) m +\rho(x,T( y))m+\lambda\rho(x,y)m\big),
\end{eqnarray*}
for arbitrary  $x, y\in \frakg, m\in M$. Then $\rho_T$  defines a representation of the $3$-Lie algebra $(\frakg_T ,[\cdot, \cdot, \cdot]_T)$ on $M$. Moreover, $(M, \rho_T, T_M)$ is a representation
of the  Rota-Baxter $3$-Lie algebra  $(\frakg_T ,[\cdot, \cdot, \cdot]_T, T)$ of   weight $\lambda$.

\end{thm}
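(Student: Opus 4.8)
The plan is to check directly the three relations that make up the statement: the two axioms in the definition of a representation of a $3$-Lie algebra, applied to $\rho_T$ over the descendent bracket $[\cdot,\cdot,\cdot]_T$, and then the Rota-Baxter representation relation for the triple $(M,\rho_T,T_M)$ over $(\frakg_T,T)$. The only inputs are the two axioms satisfied by $\rho$, the Rota-Baxter relation \eqref{Eq: Rota-Baxter relation}, and the defining relation of a Rota-Baxter representation; the whole difficulty is organising the resulting terms so that the built-in cancellations become visible.

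For the first part I would first dispose of the leading term. By Proposition~\ref{Prop: new RB 3-Lie algebra}(c) the map $T\colon(\frakg_T,[\cdot,\cdot,\cdot]_T)\to(\frakg,[\cdot,\cdot,\cdot]_\frakg)$ is a homomorphism of $3$-Lie algebras, so pulling back $\rho$ along it shows that $\sigma(x,y):=\rho(T(x),T(y))$ is \emph{already} a representation of $\frakg_T$ on $M$. Setting $\bar\rho(x,y):=\rho(T(x),y)+\rho(x,T(y))+\lambda\rho(x,y)$, we have $\rho_T=\sigma-T_M\circ\bar\rho$, so the two axioms for $\rho_T$ reduce to controlling the cross terms between $\sigma$ and $T_M\circ\bar\rho$ and the quadratic term in $T_M\circ\bar\rho$ (the $\sigma\circ\sigma$ contribution being correct for free). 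The efficient way to use the Rota-Baxter representation relation here is in its condensed form
\[ \rho(T(x),T(y))(T_M(m))=T_M\big(\rho(T(x),T(y))m+\bar\rho(x,y)(T_M(m))+\lambda\bar\rho(x,y)m\big), \]
which is exactly what moves every $T_M$ to the outside and lets it meet the terms produced by expanding $[\cdot,\cdot,\cdot]_T$ inside $\sigma$ and inside $\bar\rho$.

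For the Rota-Baxter representation relation for $(M,\rho_T,T_M)$, I would follow the pattern of Proposition~\ref{Prop: new RB 3-Lie algebra}(b): expand $\rho_T(T(x),T(y))(T_M(m))$ from the definition of $\rho_T$, which produces $T^2$-type actions, then apply the condensed relation above repeatedly to pull every $T_M$ outward, and finally recognise the outcome as $T_M$ applied to the weight-$\lambda$ Rota-Baxter combination of the operators $\rho_T(\cdot,\cdot)$. This computation is the exact analogue of the bracket computation already carried out in Proposition~\ref{Prop: new RB 3-Lie algebra}(b), with $\rho$ replacing the bracket and $T_M$ replacing $T$.

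The main obstacle is the second representation axiom, the one whose left-hand side inserts the nested bracket $[x_2,x_3,x_4]_T$ into a single slot of $\rho_T$. After substituting $\rho_T=\sigma-T_M\circ\bar\rho$, arguments of the form $T([x_2,x_3,x_4]_T)$ appear, which by \eqref{Eq: Rota-Baxter relation} equal $[T(x_2),T(x_3),T(x_4)]_\frakg$; combining this with the expansion of the remaining $[\cdot,\cdot,\cdot]_T$'s and with the compositions $\rho_T\circ\rho_T$ on the right produces a large number of summands distinguished by how many copies of $T$ and of $T_M$ they carry. The cancellation among them is delicate, and the key point, which I would make explicit, is that the coefficients and the outside placement of $T_M$ in the definition of $\rho_T$ are calibrated precisely so that, once the condensed Rota-Baxter representation relation is applied, every surplus term pairs off. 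Sorting the terms by their combined $(T,T_M)$-degree and keeping the $\sigma$-contribution separate from the $T_M\circ\bar\rho$-contribution is what I expect to make the bookkeeping manageable.
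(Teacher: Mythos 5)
Your proposal is correct, and for the second assertion (the Rota-Baxter compatibility of $(M,\rho_T,T_M)$ over $(\frakg_T,[\cdot,\cdot,\cdot]_T,T)$) it coincides with the paper's proof: the paper expands $\rho_T(T(x),T(y))T_M(m)$ by definition, applies the representation relation --- your ``condensed form'' --- to the terms $\rho(T^2(x),T^2(y))T_M(m)$, $\rho(T^2(x),T(y))T_M(m)$, etc., and regroups everything into $\rho_T$-terms. Where you genuinely depart from the paper is the first assertion: the paper's entire proof of it is the sentence ``one can show it directly by a tedious computation'', whereas you factor the problem through two structural observations --- that $\sigma(x,y):=\rho(T(x),T(y))$ is automatically a representation of $\frakg_T$, being the pullback of $\rho$ along the morphism $T:\frakg_T\to\frakg$ of Proposition~\ref{Prop: new RB 3-Lie algebra}(c), and that $\rho_T=\sigma-T_M\circ\bar\rho$ with $\bar\rho(x,y)=\rho(T(x),y)+\rho(x,T(y))+\lambda\rho(x,y)$, so that the operator identity $\sigma(x,y)\circ T_M=T_M\circ\big(\sigma(x,y)+\bar\rho(x,y)\circ T_M+\lambda\bar\rho(x,y)\big)$ pushes every $T_M$ outward. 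This buys a concrete simplification that your write-up stops just short of stating: substituting $\rho_T=\sigma-T_M\circ\bar\rho$ into either representation axiom and applying that identity once per cross term makes all quadratic terms of the form $T_M\bar\rho\,T_M\bar\rho$ cancel, so each axiom reduces to a $T_M$-free identity; for the first axiom this is
\begin{align*}
&\sigma(x_1,x_2)\bar\rho(x_3,x_4)+\bar\rho(x_1,x_2)\sigma(x_3,x_4)+\lambda\,\bar\rho(x_1,x_2)\bar\rho(x_3,x_4)\\
={}&\bar\rho([x_1,x_2,x_3]_T,x_4)+\bar\rho(x_3,[x_1,x_2,x_4]_T)\\
&+\sigma(x_3,x_4)\bar\rho(x_1,x_2)+\lambda\,\bar\rho(x_3,x_4)\bar\rho(x_1,x_2)+\bar\rho(x_3,x_4)\sigma(x_1,x_2),
\end{align*}
which one checks by applying the first axiom of $\rho$ to each of the fifteen compositions on the left and reassembling the resulting insertion terms via $T([x_1,x_2,x_3]_T)=[T(x_1),T(x_2),T(x_3)]_\frakg$ and the definition of $[\cdot,\cdot,\cdot]_T$; the second axiom reduces in exactly the same manner. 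So your claim that ``every surplus term pairs off'' is not merely an expectation --- it is provable, and by a route that is more structured and more informative than the paper's unwritten brute-force expansion; the only thing missing from your proposal is to carry out this final $T_M$-free verification explicitly rather than asserting it, which is precisely the content the paper omits as well.
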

\begin{proof} One can show that  $\rho_T$  defines a representation of the $3$-Lie algebra $(\frakg_T ,[\cdot, \cdot, \cdot]_T)$ on $M$ directly by a tedious computation.   Moreover, we have
\begin{small}
\begin{eqnarray*}
&& \rho_T(T(x),T(y))T_{M}(m)\\
&=& \rho(T^2(x), T^2(y))T_{M}(m)-T_M(\rho(T^2(x),T(y)) T_{M}(m) +\rho(T(x),T^2(y))T_{M}(m)+\lambda\rho(T(x),T(y))T_{M}(m))\\
&=& T_M\big(\rho(T^2(x), T^2(y)) m +\rho(T^2(x), T(y))T_M(m) +\rho(T(x), T^2(y))T_M(m) \\
&&+  \lambda \ \rho(T^2(x), T(y)) m+\lambda \ \rho(T(x), T^2(y)) m+\lambda \ \rho(T(x), T(y)) T_M(m)+\lambda^2\rho(T(x), T(y))m\big)\\
&&-T^2_M\big(\rho(T^2(x), T(y)) m +\rho(T^2(x), y)T_M(m) +\rho(T(x), T(y))T_M(m)+  \lambda \ \rho(T^2(x), y) m\\
&&+\lambda \ \rho(T(x), T(y)) m+\lambda \ \rho(T(x), y) T_M(m)+\lambda^2\rho(T(x), y)m+\rho(T(x), T^2(y)) m +\rho(T(x), T(y))T_M(m) \\
&&+\rho(x, T^2(y))T_M(m)+  \lambda \ \rho(T(x), T(y)) m+\lambda \ \rho(x, T^2(y)) m+\lambda \ \rho(x, T(y)) T_M(m)+\lambda^2\rho(x, T(y))m\\
&& +\lambda\rho(T(x), T(y)) m +\lambda\rho(T(x), y)T_M(m) +\lambda\rho(x, T(y))T_M(m) +  \lambda^2 \rho(T(x), y) m+\lambda^2 \rho(x, T(y)) m\\
&&+\lambda^2 \rho(x, y) T_M(m)+\lambda^3\rho(x, y)m\big)\\
&=& T_M\big(\rho_T(T(x), T(y)) m +\rho_T(T(x), y)T_M(m) +\rho_T(x, T(y))T_M(m)\\
&&+  \lambda \ \rho_T(T(x), y) m+\lambda \ \rho_T(x, T(y)) m+\lambda \ \rho_T(x, y) T_M(m)+\lambda^2\rho_T(x, y)m\big),
\end{eqnarray*}
\end{small}
which shows $(M, \rho_T, T_M)$ is a representation
of the  Rota-Baxter $3$-Lie algebra  $(\frakg_T, [\cdot, \cdot, \cdot]_T, T)$ of  weight $\lambda$.
\end{proof}

\bigskip

\section{Cohomology theory of  Rota-Baxter 3-Lie algebras} \label{Sect: Cohomology theory of Rota-Baxter 3-Lie algebras}
In this  section, we will define a cohomology theory for   Rota-Baxter $3$-Lie algebras of  arbitrary weights.

\subsection{Cohomology of  Rota-Baxter operators}\
\label{Subsect: cohomology RB operator}

Firstly, let's introduce the cohomology of  Rota-Baxter
operators of  arbitrary weights.

 Let $(\frakg , [\cdot, \cdot, \cdot]_\frakg , T)$ be a  Rota-Baxter $3$-Lie algebra of  weight $\lambda$ and $(M, \rho, T_M)$ be a representation over it. Recall that
Proposition~\ref{Prop: new RB 3-Lie algebra}  and Proposition~\ref{Prop: new-representation}  give a new
$3$-Lie algebra   $\frakg _T $ and
  a new  representation  $M$ over $\frakg _T $.
 Consider the  cochain complex of $\frakg _T $ with
 coefficients in $M$:
 $$\C^\bullet_{\PLA}(\frakg _T, M)=\bigoplus\limits_{n=0}^\infty \C^n_{\PLA}(\frakg_T , M).$$
  More precisely,  for $n\geqslant 0$,  $ \C^n_{\PLA}(\frakg _T , M)=\Hom  (\underbrace{\wedge^2\mathfrak{g}\otimes \cdots \otimes \wedge^2\mathfrak{g}}_{n-1}\wedge \mathfrak{g}, M)$ and its differential $$\partial^n:
 \C^n_{\tLie}(\frakg _T,\  M)\rightarrow  \C^{n+1}_{\tLie}(\frakg _T , M) $$ is defined as:
\begin{small}
\begin{eqnarray*}
&&(\partial^n f)(\mathfrak{X}_1, ... \mathfrak{X}_n, x_{n+1})\\
&=&\sum_{1\leq j< k\leq n}(-1)^{j}f\big(\mathfrak{X}_1, \cdots, \mathfrak{\hat{X}}_j, \cdots, \mathfrak{X}_{k-1}, [x_j, y_j, x_k]_T\wedge y_k+ x_k\wedge [x_j, y_j, y_k]_T, \mathfrak{X}_{k+1}, \cdots, \mathfrak{X}_{n}, x_{n+1}\big)\\
&& +\sum^{n}_{j=1}(-1)^{j}f\big(\mathfrak{X}_1, \cdots, \mathfrak{\hat{X}}_j, \cdots, \mathfrak{X}_{n}, [x_j, y_j, x_{n+1}]_T\big)+ \sum^{n}_{j=1}(-1)^{j+1}\rho_T(x_j, y_j)f\big(\mathfrak{X}_1, \cdots, \mathfrak{\hat{X}}_j, \cdots, \mathfrak{X}_{n}, x_{n+1}\big)\\
&&+ (-1)^{n+1}\Big(\rho_T(y_n, x_{n+1})f\big(\mathfrak{X}_1,  \cdots, \mathfrak{X}_{n-1}, x_{n}\big)+\rho_T(x_{n+1}, x_n)f\big(\mathfrak{X}_1,  \cdots, \mathfrak{X}_{n-1}, y_{n}\big)\Big),\\
&=&\sum_{1\leq j< k\leq n}(-1)^{j}f(\mathfrak{X}_1, \cdots, \mathfrak{\hat{X}}_j, \cdots, \mathfrak{X}_{k-1}, [x_j, y_j, x_k]_T\wedge y_k+ x_k\wedge [x_j, y_j, y_k]_T, \mathfrak{X}_{k+1}, \cdots, \mathfrak{X}_{n}, x_{n+1})\\
&& +\sum^{n}_{j=1}(-1)^{j}f(\mathfrak{X}_1, \cdots, \mathfrak{\hat{X}}_j, \cdots, \mathfrak{X}_{n}, [x_j, y_j, x_{n+1}]_T)+ \sum^{n}_{j=1}(-1)^{j+1}\rho (T(x_j), T(y_j))f(\mathfrak{X}_1, \cdots, \mathfrak{\hat{X}}_j, \cdots, \mathfrak{X}_{n}, x_{n+1})\\
&&-\sum^{n}_{j=1}(-1)^{j+1} T_M\left( \rho(T(x_j),  y_j )f(\mathfrak{X}_1, \cdots, \mathfrak{\hat{X}}_j, \cdots, \mathfrak{X}_{n}, x_{n+1})\right) \\
&&-\sum^{n}_{j=1}(-1)^{j+1} T_M\left( \rho( x_j ,  T(y_j) ) f(\mathfrak{X}_1, \cdots, \mathfrak{\hat{X}}_j, \cdots, \mathfrak{X}_{n}, x_{n+1})\right) \\
&&-\sum^{n}_{j=1}(-1)^{j+1}\lambda \ T_M\left( \rho( x_j ,  y_j ) f(\mathfrak{X}_1, \cdots, \mathfrak{\hat{X}}_j, \cdots, \mathfrak{X}_{n}, x_{n+1})\right) \\
&&+ (-1)^{n+1}\left(  \rho(T(y_n), T(x_{n+1}))f(\mathfrak{X}_1,  \cdots, \mathfrak{X}_{n-1}, x_{n})+\rho (T(x_{n+1}), T(x_n))f(\mathfrak{X}_1,  \cdots, \mathfrak{X}_{n-1}, y_{n}) \right)  \\
&&- (-1)^{n+1} T_M\left( \rho(T(y_n),  x_{n+1} )f(\mathfrak{X}_1,  \cdots, \mathfrak{X}_{n-1}, x_{n})+\rho (T(x_{n+1}), x_n)f(\mathfrak{X}_1,  \cdots, \mathfrak{X}_{n-1}, y_{n})\right) \\
&&- (-1)^{n+1}T_M\left(  \rho( y_n , T(x_{n+1}))f(\mathfrak{X}_1,  \cdots, \mathfrak{X}_{n-1}, x_{n})+\rho (x_{n+1}, T(x_n))f(\mathfrak{X}_1,  \cdots, \mathfrak{X}_{n-1}, y_{n})\right) \\
&&- (-1)^{n+1}\lambda \ T_M\left(  \rho( y_n ,  x_{n+1} )f(\mathfrak{X}_1,  \cdots, \mathfrak{X}_{n-1}, x_{n})+\rho (x_{n+1}, x_n)f(\mathfrak{X}_1,  \cdots, \mathfrak{X}_{n-1}, y_{n})\right)
\end{eqnarray*}
\end{small}
for $\mathfrak{X}_i=x_i\wedge y_i\in \wedge^2\mathfrak{g}, i=1, 2\cdots n$ and $x_{n+1}\in \mathfrak{g}$.
 \smallskip

 \begin{defn}
 	Let $(\frakg , [\cdot, \cdot, \cdot]_\frakg , T)$ be a  Rota-Baxter $3$-Lie algebra of   weight $\lambda$ and $(M, \rho, T_M)$ be a representation over it.  Then the cochain complex $(\C^\bullet_\PLA(\frakg _T,  M),\partial)$ is called the \textbf{cochain complex of  Rota-Baxter operator}  $T$  of  weight $\lambda$ with coefficients in $(M, \rho_T, T_M)$,  denoted by $C_{\RBO}^\bullet(\frakg , M)$. The cohomology of $C_{\RBO}^\bullet(\frakg ,M)$, denoted by $\mathrm{H}_{\RBO}^\bullet(\frakg ,M)$, are called the \textbf{cohomology of    Rota-Baxter operator} $T$ of  weight $\lambda$  with coefficients in $(M, \rho_T, T_M)$.
 	
 	 When $(M,\rho, T_M)$ is the regular representation  $ (\frakg , [\cdot, \cdot, \cdot]_\frakg,   T)$, we denote $\C^\bullet_{\RBO}(\frakg ,\frakg )$ by $\C^\bullet_{\RBO}(\frakg )$ and call it the cochain complex of Rota-Baxter operator of  weight $\lambda$, and denote $\rmH^\bullet_{\RBO}(\frakg ,\frakg )$ by $\rmH^\bullet_{\RBO}(\frakg )$ and call it the cohomology of  Rota-Baxter operator  $T$ of weight $\lambda$.
 \end{defn}

\subsection{Cohomology of  Rota-Baxter $3$-Lie algebras}\
\label{Subsec:chomology RB}

In this subsection, we will combine the  cohomology of   $3$-Lie algebras and the cohomology of  Rota-Baxter operators  of  arbitrary weights to define a cohomology theory for   Rota-Baxter $3$-Lie algebras  of  arbitrary weights.

Let $M=(M,\rho, T_M)$ be a representation over a   Rota-Baxter $3$-Lie algebra     $\frakg =(\frakg ,\mu=[\cdot, \cdot, \cdot]_\frakg,T)$ of weight $\lambda$. Now, let's construct a chain map   $$\Phi^\bullet:\C^\bullet_{\PLA}(\frakg ,M) \rightarrow C_{\RBO}^\bullet(\frakg ,M),$$ i.e., the following commutative diagram:
\[\xymatrix{
		\C^0_{\PLA}(\frakg ,M)\ar[r]^-{\delta^0}\ar[d]^-{\Phi^0}& \C^1_{\PLA}(\frakg,M)\ar@{.}[r]\ar[d]^-{\Phi^1}&\C^n_{\PLA}(\frakg,M)\ar[r]^-{\delta^n}\ar[d]^-{\Phi^n}&\C^{n+1}_{\PLA}(\frakg,M)\ar[d]^{\Phi^{n+1}}\ar@{.}[r]&\\
		\C^0_{\RBO}(\frakg,M)\ar[r]^-{\partial^0}&\C^1_{\RBO}(\frakg,M)\ar@{.}[r]& \C^n_{\RBO}(\frakg,M)\ar[r]^-{\partial^n}&\C^{n+1}_{\RBO}(\frakg,M)\ar@{.}[r]&
.}\]

Define $\Phi^0=\Id_{\Hom(k,M)}=\Id_M$, $\Phi^1=f\circ T-T_M\circ f$ and for  $n\geqslant 2$ and $ f\in \C^n_{\PLA}(\frakg,M)$,  define $\Phi^n(f)\in \C^n_{\RBO}(\frakg,M)$ as:
\begin{align*}
  \Phi^n(f) =&f\circ\left( T,\cdots,T,T\right)\circ \left( (\Id \wedge \Id)^{\otimes n-2 }  \otimes \Id^{\wedge 3 } \right) \\
 &-\sum_{k=0}^{2n-2}\lambda^{2n-k-2}\sum_{1\leqslant i_1<i_2<\cdots< i_{k-1}\leqslant 2n-1}  \\
 & T_M\circ f\circ (\Id^{(i_1-1)} ,  T , \Id^{ (i_2-i_1-1)} , T, \cdots, T,\Id^{  (2n-1-i_{k})}  )\circ \left( (\Id \wedge \Id)^{\otimes n-2 }  \otimes \Id^{\wedge 3 } \right)  ,
\end{align*}
where the operators  $ \Id^{(i_1-1)} ,  T , \Id^{ (i_2-i_1-1)} , T, \cdots, T,\Id^{  (2n-1-i_{k})}  $ successively operate on the elements $x_1,y_1,\dots,x_n,y_n,x_{n+1}$.

\smallskip

\begin{prop}\label{Prop: Chain map Phi}
	The map $\Phi^\bullet: \C^\bullet_\PLA(\frakg,M)\rightarrow \C^\bullet_{\RBO}(\frakg,M)$ is a chain map.
\end{prop}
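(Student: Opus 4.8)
The plan is to verify directly that $\Phi^\bullet$ commutes with the differentials, i.e. that $\partial^n \circ \Phi^n = \Phi^{n+1}\circ \delta^n$ for all $n\geqslant 0$. Since both $\delta^n$ (the $3$-Lie differential on $\C^\bullet_\PLA(\frakg,M)$) and $\partial^n$ (the differential on $\C^\bullet_\RBO(\frakg,M)$, which is really the $3$-Lie differential for $\frakg_T$ with coefficients in $(M,\rho_T,T_M)$) are explicit sums of terms indexed by the same combinatorial data, the whole statement reduces to a term-by-term bookkeeping identity. First I would treat the low-degree cases $n=0$ and $n=1$ separately and by hand, because the formulas for $\Phi^0=\Id_M$ and $\Phi^1=f\circ T-T_M\circ f$ are not special cases of the general $n\geqslant 2$ formula; checking $\partial^0\Phi^0=\Phi^1\delta^0$ and $\partial^1\Phi^1=\Phi^2\delta^1$ pins down the base of the induction and isolates the role of the Rota-Baxter relation~\eqref{Eq: Rota-Baxter relation} and the representation condition of Definition~\ref{Def: Rota-Baxter representations}.

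For the general case I would organize the computation around the defining feature of $\Phi^n$: it is the alternating sum over all ways of inserting the operator $T$ into the $2n-1$ slots occupied by $x_1,y_1,\dots,x_n,y_n,x_{n+1}$, with the term having $k$ copies of $T$ missing weighted by $\lambda^{2n-k-2}$ and prefixed by $T_M$ (the leading term $f\circ(T,\dots,T)$ being the $k=2n-1$ contribution). The key observation is that applying $\partial^n$ replaces each $3$-Lie bracket $[\,\cdot\,,\,\cdot\,,\,\cdot\,]_T$ and each action $\rho_T$ by their expansions from Proposition~\ref{Prop: new RB 3-Lie algebra} and Theorem~\ref{Prop: new-representation}, each of which is itself a seven-term sum built from $T$, $T_M$ and $\lambda$. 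One then expands both $\partial^n\Phi^n(f)$ and $\Phi^{n+1}\delta^n(f)$ fully and matches monomials: each appearance of $T$ acting on an argument and each outer $T_M$ must occur on both sides with the same $\lambda$-power and the same sign. The matching of $T$-insertions inside the bracket arguments uses the crucial compatibility $T([x,y,z]_T)=[Tx,Ty,Tz]_\frakg$ established in Proposition~\ref{Prop: new RB 3-Lie algebra}(c), which is precisely what lets a ``missing'' $T$ in one slot be absorbed by the outer $T_M$.

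I expect the main obstacle to be the combinatorial control of the $\lambda$-weighted sum over subsets of insertion positions: on the $\partial^n\Phi^n$ side the bracket/action expansions increase the number of distinguished arguments from $2n-1$ to $2n+1$, whereas on the $\Phi^{n+1}\delta^n$ side the sum runs over subsets of $\{1,\dots,2n+1\}$ directly, so one must set up a bijection between insertion patterns that respects both the binomial-type count (subsets of a given size $k$) and the weight $\lambda^{2(n+1)-k-2}$. The cleanest way to carry this out is to fix the number $k$ of $T$'s that survive and show that the coefficient of each resulting monomial agrees; the Rota-Baxter identity and the representation identity are exactly the relations needed to reconcile the terms where a bracket of three $T$'s collapses onto a single outer $T$ (respectively $T_M$) versus the terms where it is distributed. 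Once the $n\geqslant 2$ identity is reduced to this per-monomial comparison, the remaining work is routine but lengthy; I would present the reduction in full and relegate the final sign-and-coefficient check to a ``tedious but straightforward'' verification, flagging the two defining identities as the only nontrivial inputs.
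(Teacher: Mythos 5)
Your proposal follows essentially the same route as the paper's own proof (Appendix A): a direct verification that $\partial^n\Phi^n=\Phi^{n+1}\delta^n$ by expanding both sides via the definitions of $[\cdot,\cdot,\cdot]_T$ and $\rho_T$ and matching the $\lambda$-weighted $T$-insertion terms, with the compatibility $T([x,y,z]_T)=[T(x),T(y),T(z)]_\frakg$ from Proposition~\ref{Prop: new RB 3-Lie algebra}(c) reconciling the leading terms, exactly as you indicate. The only cosmetic difference is your framing of the low-degree checks as the base of an induction: the paper's (and, in substance, your) general-$n$ argument is a direct term-by-term computation carried out independently for each $n$, not an induction, though treating $n=0,1$ separately is indeed necessary since $\Phi^0$ and $\Phi^1$ are defined by their own formulas.
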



 We leave the long proof of this result to Appendix A.

\smallskip

\begin{defn}
 Let $M=(M,\rho, T_M)$ be a  representation over a   Rota-Baxter $3$-Lie algebra   $\frakg =(\frakg ,\mu=[\cdot, \cdot, \cdot]_\frakg,T)$ of   weight $\lambda$.  We define the  cochain complex $(\C^\bullet_{\RBA}(\frakg,M), d^\bullet)$  of  Rota-Baxter $3$-Lie algebra  $(\frakg,\mu,T)$ of  weight $\lambda$ with coefficients in $(M,\rho, T_M)$ to the negative shift of the mapping cone of $\Phi^\bullet$, that is,   let
\[\C^0_{\RBA}(\frakg,M)=\C^0_\PLA(\frakg,M)  \quad  \mathrm{and}\quad   \C^n_{\RBA}(\frakg,M)=\C^n_\PLA(\frakg,M)\oplus \C^{n-1}_{\RBO}(\frakg,M), \forall n\geqslant 1,\]
 and the differential $d^n: \C^n_{\RBA}(\frakg,M)\rightarrow \C^{n+1}_{\RBA}(\frakg,M)$ is given by \[d^n(f,g)= (\delta^n(f), -\partial^{n-1}(g)  -\Phi^n(f))\]
 for arbitrary  $f\in \C^n_\PLA(\frakg,M)$ and $g\in \C^{n-1}_{\RBO}(\frakg,M)$.
The  cohomology of $(\C^\bullet_{\RBA}(\frakg,M), d^\bullet)$, denoted by $\rmH_{\RBA}^\bullet(\frakg,M)$,  is called the \textbf{cohomology of the  Rota-Baxter $3$-Lie algebra}  $(\frakg,\mu,T)$ of  weight $\lambda$ with coefficients in $(M,\rho, T_M)$.
When $(M,\rho, T_M)=(\frakg,\mu, T)$, we just denote $\C^\bullet_{\RBA}(\frakg,\frakg), $\\$\rmH^\bullet_{\RBA}(\frakg,\frakg)$   by $\C^\bullet_{\RBA}(\frakg),~ \rmH_{\RBA}^\bullet(\frakg)$ respectively, and call  them the \textbf{cochain complex, the cohomology of   Rota-Baxter $3$-Lie algebra}  $(\frakg,\mu,T)$ of  weight $\lambda$ respectively.
\end{defn}
There is an obvious short exact sequence of complexes:
$$ 0\to \C^\bullet_{\RBO}(\frakg,M)[-1]\to \C^\bullet_{\RBA}(\frakg,M)\to \C^\bullet_{\PLA}(\frakg,M)\to 0,$$
which induces a long exact sequence of cohomology groups
$$0\to \rmH^{0}_{\RBA}(\frakg, M)\to \rmH^0_{\mathrm{3}-\mathrm{Lie}}(\frakg, M)\to \rmH^0_{\RBO}(\frakg, M) \to \rmH^{1}_{\RBA}(\frakg, M)\to {\rmH}^1_{\mathrm{3}-\mathrm{Lie}}(\frakg, M)\to\cdots  $$
$$\cdots\to {\rmH}^p_{\mathrm{3}-\mathrm{Lie}}(\frakg, M)\to \rmH^p_{\RBO}(\frakg, M)\to \rmH^{p+1}_{\RBA}(\frakg, M)\to {\rmH}^{p+1}_{\mathrm{3}-\mathrm{Lie}}(\frakg, M)\to \cdots$$


\bigskip


\section{Formal deformations of  Rota-Baxter $3$-Lie algebras}

In this section, we will study formal deformations of  Rota-Baxter $3$-Lie algebras of  arbitrary weights and interpret  them  via    lower degree   cohomology groups  of  Rota-Baxter $3$-Lie algebras   defined in last section.

\subsection{Formal deformations of Rota-Baxter operator  with $3$-Lie bracket fixed}\

Let $(\frakg,\mu, T)$ be a Rota-Baxter $3$-Lie algebra of weight $\lambda$. Let us consider the case where  we only deform the Rota-Baxter operator with the $3$-Lie bracket fixed. In this case,     $\frakg[[t]]=\{\sum_{i=0}^\infty a_it^i\ | \ a_i\in \frakg, \forall i\geqslant 0\}$ is endowed with the $3$-Lie bracket induced from that of $\frakg$, say,
$$\mu(\sum_{i=0}^\infty a_it^i,\sum_{j=0}^\infty b_jt^j,\sum_{i=0}^\infty c_it_i)=\sum_{n=0}^\infty \big(\sum_{i+j+k=n\atop i,j,k\geqslant 0} \mu (a_i,b_j,c_k)\big)t^n.$$
Then $\frakg[[t]]$ becomes a $3$-Lie algebra over $\bfk[[t]]$, whose $3$-Lie bracket is still denoted by $\mu$.

 Consider a 1-parameterized family:
\[  T_t=\sum_{i=0}^\infty T_it^i,  \ T_i\in \C^1_{\RBO}(\frakg).\]
\begin{defn}
	A  \textbf{$1$-parameter formal deformation of the   Rota-Baxter operator} $T$ on a $3$-Lie algebra  $(\frakg, \mu)$ is a  family $T_t$ which is a $\bfk[[t]]$-linear Rota-Baxter operator on the $3$-Lie algebra   $\frakg[[t]]$  such that $T_0=T$. The operator  $T_1$ is called the \textbf{infinitesimal} of the 1-parameter formal deformation $(\frakg[[t]],T_t)$ of   Rota-Baxter operators on the $3$-Lie algebra $(\frakg,\mu)$.
\end{defn}

Power series   $ T_t$ determine a  $1$-parameter formal deformation of  the   Rota-Baxter operator $T$ on a $3$-Lie algebra  $(\frakg, \mu)$ if and only if  the following equation holds:
 $$\begin{array}{cl}
 & \mu(T_t(x_1),  T_t(x_2), T_t(x_3))\\
 =&T_t(\mu(T_t(x_1), T_t(x_2), x_3)+\mu(T_t(x_1), x_2, T_t(x_3))+\mu(x_1, T_t(x_2), T_t(x_3))\\
&+\lambda \  \mu(T_t(x_1), x_2, x_3)+\lambda \  \mu(x_1, T_t(x_2), x_3)+\lambda \  \mu(x_1, x_2, T_t(x_3))+\lambda^2  \mu(x_1, x_2, x_3)\big),
 \end{array}$$
  for arbitrary  $x_1, x_2, x_3, x_4, x_5\in \frakg$.
 Expanding  this equation  and comparing the coefficient of $t^n$, we obtain  that  $\{T_i\}_{i\geqslant0}$ have to  satisfy: for arbitrary $n\geqslant 0$,
 \begin{small}
 \begin{equation}
 	\begin{aligned}	\label{Eq: deform eq for RBO}
 		&\sum_{i+j+k=n\atop i, j,k\geqslant 0} 	\mu (T_i(x_1),  T_{j}(x_2),  T_{k}(x_3))\\
 =&\sum_{i+j+k=n\atop i, j,k\geqslant 0} \left( T_{ i}  (\mu  (x_1,  T_{j}(x_2),  T_{k}(x_3))) +T_{i}(\mu  (T_{j}(x_1),  x_2,  T_{k}(x_3)))+T_{k} (\mu  ( T_{i}(x_1),  T_{j}(x_2),  x_3))\right) \\
 		&+\lambda \sum_{i+j=n\atop i, j\geqslant 0}\left( T_i( \mu (T_{j}(x_1),  x_2,  x_3))+ T_i( \mu ( x_1,  T_{j}(x_2),  x_3))+ T_i( \mu ( x_1,  x_2,  T_{j}(x_3)))\right) \\
 		&+\lambda^2T_n( \mu(x_1,  x_2,  x_3)).
 	\end{aligned}
 \end{equation}
 \end{small}

Obviously, when $n=0$,  Equation~(\ref{Eq: deform eq for RBO}) becomes  exactly Equation~(\ref{Eq: Rota-Baxter relation}) defining Rota-Baxter operator $T=T_0$.

When $n=1$,    Equation~(\ref{Eq: deform eq for RBO}) has the form
   \begin{small}
    \begin{equation}
 	\begin{aligned}	\label{Eq: deform eq1 for RBO}
 		&\mu \left( T_1(x_1),  T(x_2),  T(x_3)\right)+\mu \left(T(x_1),  T_1(x_2),  T(x_3)\right)+\mu \left(T(x_1),  T(x_2),  T_1(x_3)\right) \\
 		=& \quad  T\left(\mu(x_1,  T_1(x_2),  T(x_3))+\mu(x_1,  T(x_2),  T_1(x_3))\right)\\
      & +T\left(\mu(T_{1}(x_1),  x_2,  T(x_3))
 		  +\mu(T(x_1),  x_2,  T_{1}(x_3))\right)\\
 &+T\left(\mu(T_{1}(x_1),  T(x_2),  x_3)+\mu(T(x_1),  T_{1}(x_2),  x_3)\right)   \\
 	&+T_1 \left(  \mu  (x_1,  T(x_2),  T(x_3))+ \mu  (T(x_1),  x_2,  T(x_3))+ \mu  (T(x_1),  T(x_2),  x_3)\right) \\
 &+\lambda T \left(  \mu  (T_1(x_1),  x_2,  x_3)+\mu(x_1, T( x_2) ,  x_3)+  \mu( x_1 ,  x_2 ,  T_1(x_3) )\right)  \\
 	&+\lambda T_1  \left(  \mu  (x_1,  T(x_2) ,  x_3)+\mu  (T(x_1) ,  x_2,  x_3)  +\mu  (x_1 ,  x_2 ,  T(x_3)  )\right) \\
 &+ \lambda^2 T_1( \mu( x_1,  x_2 ,  x_3))
 \end{aligned}
 \end{equation}
\end{small}
 which     says exactly that $\partial^1(T_1)=0\in \C^\bullet_{\RBO}(\frakg)$.
This proves the following result:
\begin{prop}
	Let $ T_t $ be a  1-parameter formal deformation of
	Rota-Baxter operator  $ T $ of weight $\lambda$. Then
	$ T_1 $ is a 1-cocycle in the cochain complex
	$\C_{\RBO}^\bullet(\frakg)$.
\end{prop}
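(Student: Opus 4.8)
The plan is to verify that the coefficient of $t^1$ in the deformation equation \eqref{Eq: deform eq for RBO}, which is spelled out as Equation~\eqref{Eq: deform eq1 for RBO}, is literally the cocycle equation $\partial^1(T_1)=0$ in the complex $\C^\bullet_{\RBO}(\frakg)$; the proposition is then immediate.

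First I would make both sides explicit. Since $T_1\in\C^1_{\RBO}(\frakg)=\Hom(\frakg,\frakg)$, I specialise the differential $\partial^\bullet$ of Subsection~\ref{Subsect: cohomology RB operator} to $n=1$ (so that the double sum $\sum_{1\le j<k\le n}$ is empty) and to the regular representation $(\frakg,\ad,T)$, obtaining
\[(\partial^1 T_1)(x\wedge y,z)=-T_1([x,y,z]_T)+\rho_T(x,y)T_1(z)+\rho_T(y,z)T_1(x)+\rho_T(z,x)T_1(y),\]
where, by Theorem~\ref{Prop: new-representation} applied to the adjoint representation, $\rho_T(a,b)(c)=[T(a),T(b),c]_\frakg-T\big([T(a),b,c]_\frakg+[a,T(b),c]_\frakg+\lambda[a,b,c]_\frakg\big)$.

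Next I would expand every summand into elementary brackets $[\,\cdot,\cdot,\cdot\,]_\frakg$, using the definition of $[\,\cdot,\cdot,\cdot\,]_T$ from Proposition~\ref{Prop: new RB 3-Lie algebra} together with the formula for $\rho_T$, and compare with Equation~\eqref{Eq: deform eq1 for RBO} under the identification in which $(x,y,z)$ plays the role of $(x_1,x_2,x_3)$. To keep the comparison organised I sort the resulting terms by their power of $\lambda$ and by the outer operator ($\id$, $T$, or $T_1$). The $\lambda^0$ pure-bracket terms $[T(x),T(y),T_1(z)]+[T(y),T(z),T_1(x)]+[T(z),T(x),T_1(y)]$ reproduce, after a cyclic reordering of arguments, exactly the left-hand side of \eqref{Eq: deform eq1 for RBO}; the summands carrying an outer $T_1$ reproduce the $T_1(\cdots)$ lines with an overall minus sign; the summands carrying an outer $T$, split into their $\lambda^0$ and $\lambda^1$ parts, reproduce the $T(\cdots)$ and $\lambda T(\cdots)$ lines; and the lone $\lambda^2$ contribution $-\lambda^2 T_1([x,y,z]_\frakg)$ matches $\lambda^2 T_1(\mu(x_1,x_2,x_3))$. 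The upshot is that $(\partial^1 T_1)(x\wedge y,z)$ equals the left-hand side minus the right-hand side of \eqref{Eq: deform eq1 for RBO}, so the vanishing of $\partial^1(T_1)$ is equivalent to that equation.

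The only genuine work is bookkeeping: each $\rho_T$ unfolds into three elementary brackets, and every bracket must be brought to a common argument order using the skew-symmetry of $[\,\cdot,\cdot,\cdot\,]_\frakg$, so the main obstacle is to track the signs and the placement of $T$, $T_1$ and $\id$ across the dozen-odd terms without slip. No conceptual difficulty should arise beyond this, since the differential $\partial^\bullet$ was constructed precisely so that its degree-one cocycle condition is the linearisation of the Rota-Baxter relation~\eqref{Eq: Rota-Baxter relation}.
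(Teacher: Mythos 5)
Your proposal is correct and follows exactly the paper's own route: the paper likewise obtains Equation~\eqref{Eq: deform eq1 for RBO} as the $t^1$-coefficient of the deformed Rota-Baxter relation and then observes that it is precisely the cocycle condition $\partial^1(T_1)=0$ in $\C^\bullet_{\RBO}(\frakg)$ for the regular representation. The only difference is that you spell out the bookkeeping (unfolding $\rho_T$ and $[\cdot,\cdot,\cdot]_T$, reordering by skew-symmetry, sorting by powers of $\lambda$ and outer operators) that the paper leaves implicit, and your identification, including the sign convention $\partial^1 T_1 = \mathrm{LHS}-\mathrm{RHS}$, checks out.
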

\begin{remark} The above result shows  that the cochain complex $\C^\bullet_\RBO(\frakg)$ controls formal deformations of Rota-Baxter operators,   which justifies the name ``cochain complex of Rota-Baxter operators".
In fact, we were inspired by  Equation~\eqref{Eq: deform eq1 for RBO} while defining $\C^\bullet_\RBO(\frakg)$.\end{remark}

\subsection{Formal deformations of Rota-Baxter $3$-Lie algebras}\

Let $(\frakg,\mu, T)$ be a  Rota-Baxter $3$-Lie algebra of  weight $\lambda$.   Consider a 1-parameterized family:
\[\mu_t=\sum_{i=0}^\infty \mu_it^i, \ \mu_i\in \C^2_\PLA(\frakg),\quad  T_t=\sum_{i=0}^\infty T_it^i,  \ T_i\in \C^1_{\RBO}(\frakg).\]

\begin{defn}
	A  \textbf{1-parameter formal deformation of   Rota-Baxter $3$-Lie algebra}  $(\frakg, \mu,T)$ of  weight $\lambda$ is a pair $(\mu_t,T_t)$ which endows the flat $\bfk[[t]]$-module $\frakg[[t]]$ with a Rota-Baxter $3$-Lie algebra structure of weight $\lambda$ over $\bfk[[t]]$ such that $(\mu_0,T_0)=(\mu,T)$. The pair $(\mu_1,T_1)$ is called the \textbf{infinitesimal} of the 1-parameter formal deformation $(\frakg[[t]],\mu_t,T_t)$ of   Rota-Baxter $3$-Lie algebra $(\frakg,\mu,T)$ of  weight $\lambda$.
\end{defn}

 Power series $\mu_t$ and $ T_t$ determine a  1-parameter formal deformation of  Rota-Baxter $3$-Lie algebra  $(\frakg,\mu,T)$ of   weight $\lambda$ if and only if for arbitrary  $x_1, x_2, x_3, x_4, x_5\in \frakg$, the following equations hold :
\begin{align*}
&\mu_t(x_1,x_2, \mu_t(x_3,x_4, x_5))\\
=\quad & \mu_t(\mu_t(x_1,x_2, x_3),x_4, x_5)+\mu_t(x_3,\mu_t(x_1, x_2, x_4), x_5)+\mu_t(x_3,x_4, \mu_t(x_1,x_2, x_5))\\
& \mu_t(T_t(x_1),  T_t(x_2), T_t(x_3))\\
 =\quad &T_t(\mu_t(T_t(x_1), T_t(x_2), x_3)+\mu_t(T_t(x_1), x_2, T_t(x_3))+\mu_t(x_1, T_t(x_2), T_t(x_3))\\
&+\lambda \  \mu_t(T_t(x_1), x_2, x_3)+\lambda \  \mu_t(x_1, T_t(x_2), x_3)+\lambda \  \mu_t(x_1, x_2, T_t(x_3))\\
&+\lambda^2  \mu_t(x_1, x_2, x_3)\big).
 \end{align*}
By expanding these equations and comparing the coefficient of $t^n$, we obtain  that $\{\mu_i\}_{i\geqslant0}$ and $\{T_i\}_{i\geqslant0}$ have to  satisfy: for arbitrary  $n\geqslant 0$,
\begin{align}\label{Eq: deform eq for  products in RBA}
&\sum_{i+j=n}\mu_i(x_1,x_2, \mu_j(x_3,x_4, x_5))\nonumber\\
= &\sum_{i+j=n}\mu_i(\mu_j(x_1,x_2, x_3),x_4, x_5)+\mu_i(x_3,\mu_j(x_1, x_2, x_4), x_5)+\mu_i(x_3,x_4, \mu_j(x_1,x_2, x_5)),\end{align}
 \begin{eqnarray}\label{Eq: Deform RB operator in RBA}
 	 	&&\sum_{i+j+k+l=n}\mu_i(T_j(x_1),  T_k(x_2), T_l(x_3))\nonumber\\
&=&\sum_{i+j+k+l=n}T_i\big(\mu_j(T_k(x_1), T_l(x_2), x_3)+\mu_j(T_k(x_1), x_2, T_l(x_3))+\mu_j(x_1, T_k(x_2), T_l(x_3))\big)\\
&&+\lambda \ \sum_{i+j+k=n} T_i\big(\mu_j(T_k(x_1), x_2, x_3)+  \mu_j(x_1, T_k(x_2), x_3)+  \mu_j(x_1, x_2, T_k(x_3))\big)\nonumber\\
&&+\lambda^2 \sum_{i+j=n} T_i\big( \mu_j(x_1, x_2, x_3)\big).\nonumber
\end{eqnarray}
Obviously, when $n=0$, the above conditions are exactly the 3-Lie bracket  $\mu=\mu_0$ and Equation~(\ref{Eq: Rota-Baxter relation}) which is the defining relation of  Rota-Baxter operator $T=T_0$ of  weight $\lambda$.

\smallskip

\begin{prop}\label{Prop: Infinitesimal is 2-cocyle}
	Let $(\frakg[[t]],\mu_t,T_t)$ be a  1-parameter formal deformation of
	Rota-Baxter $3$-Lie algebra  $(\frakg,\mu,T)$ of  weight $\lambda$. Then
	$(\mu_1,T_1)$ is a 2-cocycle in the cochain complex
	$C_{\RBA}^\bullet(\frakg)$.
\end{prop}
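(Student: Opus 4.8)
The plan is to read off the two components of the cocycle identity $d^2(\mu_1,T_1)=0$ directly from the linearised deformation equations. Recall that $\C^2_{\RBA}(\frakg)=\C^2_\PLA(\frakg)\oplus\C^1_{\RBO}(\frakg)$ and that $d^2(f,g)=(\delta^2(f),\,-\partial^1(g)-\Phi^2(f))$, so proving $(\mu_1,T_1)$ is a $2$-cocycle amounts to establishing the two equalities $\delta^2(\mu_1)=0$ and $\partial^1(T_1)+\Phi^2(\mu_1)=0$. I would obtain each of these by isolating the coefficient of $t^1$ in the corresponding deformation equation and matching it term-by-term against the relevant coboundary.

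For the first component, I would extract the $t^1$-coefficient of Equation~\eqref{Eq: deform eq for products in RBA}, the deformation of the Fundamental Identity. Since $\mu_0=\mu$, this coefficient is the familiar linearised Fundamental Identity relating $\mu$ and $\mu_1$, which is precisely the statement that $\mu_1\in\C^2_\PLA(\frakg)$ is a $2$-cocycle for the adjoint representation, i.e.\ $\delta^2(\mu_1)=0$. This is the standard ``infinitesimal is a cocycle'' argument for $3$-Lie algebra deformations and requires only bookkeeping with the signs in $\delta^2$ recalled in Section~1.

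For the second component, I would extract the $t^1$-coefficient of Equation~\eqref{Eq: Deform RB operator in RBA}. Because we are reading the degree-one part, each surviving summand is linear in exactly one of $\mu_1$ or $T_1$, so the equation splits cleanly into two groups. The terms linear in $T_1$ (with $\mu$ frozen at $\mu_0=\mu$) are exactly the terms of Equation~\eqref{Eq: deform eq1 for RBO} from the previous subsection, and hence collect into $\partial^1(T_1)$. The terms linear in $\mu_1$ (with $T$ frozen at $T_0=T$) are $\mu_1(T(x_1),T(x_2),T(x_3))$ on one side and $T\big(\mu_1(T(x_1),T(x_2),x_3)+\cdots+\lambda^2\mu_1(x_1,x_2,x_3)\big)$ on the other, which is by definition $\Phi^2(\mu_1)$ evaluated on $(x_1,x_2,x_3)$. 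Assembling the two groups turns the linearised equation into $\partial^1(T_1)+\Phi^2(\mu_1)=0$, the vanishing of the second component of $d^2(\mu_1,T_1)$.

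The main obstacle is the combinatorial and sign matching in this second component: one must check that the $\mu_1$-linear terms of~\eqref{Eq: Deform RB operator in RBA} reproduce exactly the $\lambda$-weighted pattern of $T$-insertions defining $\Phi^2$ (the $\binom{3}{k}$ choices of where to place $T$, weighted by $\lambda^{2-k}$), and that the overall signs agree with the mapping-cone convention $d^2(f,g)=(\delta^2 f,\,-\partial^1 g-\Phi^2 f)$. Since $\Phi^2$ was defined for $n=2$ so as to capture precisely this expansion of the Rota-Baxter relation, the identification is essentially forced once the indices are lined up, and the remaining work is routine verification rather than a genuine difficulty.
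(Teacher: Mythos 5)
Your proposal is correct and follows essentially the same route as the paper's own proof: the paper likewise sets $n=1$ in Equations~\eqref{Eq: deform eq for  products in RBA} and \eqref{Eq: Deform RB operator in RBA}, identifies the first resulting identity with $\delta^2(\mu_1)=0$, and recognizes the second (split into its $T_1$-linear and $\mu_1$-linear parts) as $\Phi^2(\mu_1)=-\partial^1(T_1)$, which together say $d^2(\mu_1,T_1)=0$. Your reduction of the cocycle condition to the two equalities $\delta^2(\mu_1)=0$ and $\partial^1(T_1)+\Phi^2(\mu_1)=0$, via the mapping-cone formula $d^2(f,g)=(\delta^2 f,\,-\partial^1 g-\Phi^2 f)$, matches the paper's argument exactly.
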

\begin{proof} When $n=1$,   Equations~(\ref{Eq: deform eq for  products in RBA}) and (\ref{Eq: Deform RB operator in RBA})  become
	 \begin{align*}
&[x_1,x_2, \mu_1(x_3,x_4, x_5)]_\frakg+\mu_1(x_1,x_2, [x_3,x_4, x_5]_\frakg)\\
=\quad &[\mu_1(x_1,x_2, x_3),x_4, x_5]_\frakg+[x_3,\mu_1(x_1, x_2, x_4), x_5]_\frakg+[x_3,x_4, \mu_1(x_1,x_2, x_5)]_\frakg\\
&+\mu_1([x_1,x_2, x_3]_\frakg,x_4, x_5)+\mu_1(x_3,[x_1, x_2, x_4]_\frakg, x_5)+\mu_1(x_3,x_4, [x_1,x_2, x_5]_\frakg)
\end{align*}
and
\begin{eqnarray}\label{Eq: Deform RB operator 1 in RBA}
&&\\
&&\mu_1(T(x_1), T(x_2), T(x_3))-T\big(\mu_1(T(x_1), T(x_2), x_3)-\mu_1(T(x_1), x_2, T(x_3))-\mu_1(x_1, T(x_2), T(x_3))\big)\nonumber\\
&&-\lambda \  T\big(\mu_1(T(x_1), x_2, x_3)+  \mu_1(x_1, T(x_2), x_3)+  \mu_1(x_1, x_2, T(x_3))\big)-\lambda^2  T\big(\mu_1(x_1, x_2, x_3)\big)\nonumber\\
&=&T_1\big([T(x_1), T(x_2), x_3]_\frakg+[T(x_1), x_2, T(x_3)]_\frakg+[x_1, T(x_2), T(x_3)]_\frakg\big)\nonumber\\
&&+\lambda \  T_1\big([T(x_1), x_2, x_3]_\frakg+  [x_1, T(x_2), x_3]_\frakg+  [x_1, x_2, T(x_3)]_\frakg\big)+\lambda^2  T_1\big([x_1, x_2, x_3]_\frakg\big)\nonumber\\
&&-[T_1(x_1),  T(x_2), T(x_3)]_\frakg-[T(x_1),  T_1(x_2), T(x_3)]_\frakg-[T(x_1),  T(x_2), T_1(x_3)]_\frakg\nonumber\\
&&+T\big([T_1(x_1), T(x_2), x_3]_\frakg+[T_1(x_1), x_2, T(x_3)]_\frakg+[x_1, T_1(x_2), T(x_3)]_\frakg\big)\nonumber\\
&&+T\big([T(x_1), T_1(x_2), x_3]_\frakg+[T(x_1), x_2, T_1(x_3)]_\frakg+[x_1, T(x_2), T_1(x_3)]_\frakg\big)\nonumber\\
&&+\lambda \  T\big([T_1(x_1), x_2, x_3]_\frakg+  [x_1, T_1(x_2), x_3]_\frakg+  [x_1, x_2, T_1(x_3)]_\frakg\big).\nonumber
\end{eqnarray}
Note that  the first equation is exactly $\delta^2(\mu_1)=0\in \C^\bullet_{\PLA}(\frakg)$ and that  second equation is exactly to  \[\Phi^2(\mu_1)=-\partial^1(T_1) \in \C^\bullet_{\RBO}(\frakg).\]
	So $(\mu_1,T_1)$ is a 2-cocycle in $\C^\bullet_{\RBA}(\frakg)$.
	\end{proof}

\begin{remark} Equation~\eqref{Eq: Deform RB operator 1 in RBA} inspired us to introduce the chain map $\Phi^\bullet:\C^\bullet_{\PLA}(\frakg ,M) \rightarrow C_{\RBO}^\bullet(\frakg ,M)$ as well the cochain complex $\C^\bullet_{\RBA}(\frakg, M) $.

\end{remark}
\smallskip
\begin{defn}
Let $(\frakg[[t]],\mu_t,T_t)$ and $(\frakg[[t]],\mu_t',T_t')$ be two 1-parameter formal deformations of  Rota-Baxter $3$-Lie algebra  $(\frakg,\mu,T)$ of  weight $\lambda$. A formal isomorphism from $(\frakg[[t]],\mu_t',T_t')$ to $(\frakg[[t]], \mu_t, T_t)$ is a power series $\psi_t=\sum_{i=0}\psi_it^i: \frakg[[t]]\rightarrow \frakg[[t]]$, where $\psi_i: \frakg\rightarrow \frakg$ are linear maps with $\psi_0=\Id_\frakg$, such that:
\begin{eqnarray}\label{Eq: equivalent deformations}\psi_t\circ \mu_t' &=& \mu_t\circ (\psi_t\ot \psi_t\ot \psi_t),\\
\psi_t\circ T_t'&=&T_t\circ\psi_t. \label{Eq: equivalent deformations2}
	\end{eqnarray}
	In this case, we say that the two 1-parameter formal deformations $(\frakg[[t]], \mu_t,T_t)$ and
	$(\frakg[[t]],\mu_t',T_t')$ are  equivalent.
\end{defn}

\smallskip

Given a  Rota-Baxter $3$-Lie algebra  $(\frakg,\mu,T)$ of  weight $\lambda$, the power series $\mu_t,T_t$
with $\mu_i=\delta_{i,0}\mu, T_i=\delta_{i,0}T$ makes
$(\frakg[[t]],\mu_t,T_t)$ into a $1$-parameter formal deformation of
$(\frakg,\mu,T)$. Formal deformations is said
to be trivial if it is  equivalent to $(\frakg,\mu,T)$.
\smallskip

\begin{thm}
The infinitesimals of two equivalent 1-parameter formal deformations of $(\frakg,\mu,T)$ are in the same cohomology class in $\rmH^\bullet_{\RBA}(\frakg)$.
\end{thm}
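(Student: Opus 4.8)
The plan is to show that if $(\frakg[[t]],\mu_t,T_t)$ and $(\frakg[[t]],\mu_t',T_t')$ are equivalent via a formal isomorphism $\psi_t=\sum_{i\geqslant 0}\psi_i t^i$ with $\psi_0=\Id_\frakg$, then the infinitesimals $(\mu_1,T_1)$ and $(\mu_1',T_1')$ differ by the coboundary $d^1(\psi_1)\in \C^2_{\RBA}(\frakg)$. Recall from the previous proposition that both infinitesimals are $2$-cocycles, so it suffices to produce an explicit $1$-cochain whose differential realises their difference. The natural candidate is $\psi_1\in \C^1_\PLA(\frakg)=\Hom(\frakg,\frakg)$, viewed as living in $\C^1_{\RBA}(\frakg)=\C^1_\PLA(\frakg)$ (the $\C^0_{\RBO}$-summand is $\Hom(\bfk,\frakg)$, but in degree $1$ only the $3$-Lie component contributes to the cochain we need).

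First I would expand Equation~\eqref{Eq: equivalent deformations} and collect the coefficient of $t^1$. Using $\psi_0=\Id$, $\mu_0=\mu_0'=\mu$, the degree-one part reads
\[
\psi_1\circ\mu + \mu_1' = \mu_1 + \mu\circ(\psi_1\ot\Id\ot\Id)+\mu\circ(\Id\ot\psi_1\ot\Id)+\mu\circ(\Id\ot\Id\ot\psi_1),
\]
which rearranges to $\mu_1-\mu_1'=\delta^1(\psi_1)$ once one recognises the right-hand combination as precisely the $3$-Lie coboundary $\delta^1$ applied to $\psi_1$ (here $\delta^1:\C^1_\PLA\to\C^2_\PLA$ is the differential recalled in Section~1). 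This handles the $3$-Lie-bracket component of the difference. Next I would expand Equation~\eqref{Eq: equivalent deformations2}, extract the $t^1$-coefficient, and simplify using $\psi_0=\Id$, $T_0=T_0'=T$ to obtain
\[
\psi_1\circ T + T_1' = T_1 + T\circ\psi_1,
\]
so that $T_1-T_1' = \psi_1\circ T - T\circ\psi_1 = \Phi^1(\psi_1)$, by the very definition $\Phi^1(f)=f\circ T - T_M\circ f$ with $T_M=T$ in the regular representation.

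Finally I would assemble these two identities into the statement that $(\mu_1,T_1)-(\mu_1',T_1')=d^1(\psi_1)$. Recalling the differential $d^1(f)=(\delta^1(f),-\Phi^1(f))$ on $\C^1_{\RBA}(\frakg)=\C^1_\PLA(\frakg)$ (since $\C^0_{\RBO}$ contributes a summand whose relevant part is $\psi_1$ itself), the two computations above give exactly the first and second components of $d^1(\psi_1)$, up to the sign bookkeeping built into the mapping-cone differential. I expect the main obstacle to be precisely this sign and indexing reconciliation: one must check that the sign conventions in $d^n(f,g)=(\delta^n(f),-\partial^{n-1}(g)-\Phi^n(f))$ match the signs produced by expanding the two equivariance equations, and that $\psi_1$ is placed in the correct summand of $\C^1_{\RBA}(\frakg)$ so that $d^1$ outputs a pair in $\C^2_{\RBA}(\frakg)=\C^2_\PLA(\frakg)\oplus\C^1_{\RBO}(\frakg)$. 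Once the cochain $\psi_1$ is correctly identified and the signs verified, the conclusion that $[(\mu_1,T_1)]=[(\mu_1',T_1')]$ in $\rmH^2_{\RBA}(\frakg)$ is immediate.
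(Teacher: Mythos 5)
Your strategy is exactly the paper's: expand Equations~(\ref{Eq: equivalent deformations}) and (\ref{Eq: equivalent deformations2}) in powers of $t$ and exhibit the difference of the two infinitesimals as the coboundary of the $1$-cochain $(\psi_1,0)\in\C^1_{\PLA}(\frakg)\oplus\C^{0}_{\RBO}(\frakg)=\C^1_{\RBA}(\frakg)$. The two $t^1$-expansions you display are correct, but your rearrangement of the first one has the sign backwards, and this is precisely the ``sign reconciliation'' you defer rather than resolve. Since
\[
\delta^1(\psi_1)=-\psi_1\circ\mu+\mu\circ(\psi_1\ot\Id\ot\Id)+\mu\circ(\Id\ot\psi_1\ot\Id)+\mu\circ(\Id\ot\Id\ot\psi_1),
\]
your first display yields $\mu_1'-\mu_1=\delta^1(\psi_1)$, not $\mu_1-\mu_1'=\delta^1(\psi_1)$. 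Your second rearrangement, $T_1-T_1'=\Phi^1(\psi_1)$, is correct, i.e.\ $T_1'-T_1=-\Phi^1(\psi_1)$. As stated, your two identities cannot both hold: they would give $(\mu_1,T_1)-(\mu_1',T_1')=(\delta^1(\psi_1),+\Phi^1(\psi_1))$, which is not of the form $d^1(\pm\psi_1,0)=(\pm\delta^1(\psi_1),\mp\Phi^1(\psi_1))$, so no amount of bookkeeping would assemble them into a coboundary. Once the first sign is corrected, both components are oriented the same way and you obtain
\[
(\mu_1',T_1')-(\mu_1,T_1)=(\delta^1(\psi_1),-\Phi^1(\psi_1))=d^1(\psi_1,0),
\]
which is exactly the paper's conclusion; the mapping-cone signs then match on the nose, with no further reconciliation needed.
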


\begin{proof} Let $\psi_t:(\frakg[[t]],\mu_t',T_t')\rightarrow (\frakg[[t]],\mu_t,T_t)$ be a formal isomorphism.
	Expanding the identities and collecting coefficients of $t$, we get from Equations~(\ref{Eq: equivalent deformations}) and (\ref{Eq: equivalent deformations2}):
	\begin{eqnarray*}
		\mu_1'&=&\mu_1+\mu\circ(\Id\ot\Id\ot \psi_1)-\psi_1\circ\mu+\mu\circ(\psi_1\ot \Id\ot\Id)+\mu\circ( \Id\ot\psi_1\ot\Id),\\
		T_1'&=&T_1+T\circ\psi_1-\psi_1\circ T,
		\end{eqnarray*}
	that is, we have\[(\mu_1',T_1')-(\mu_1,T_1)=(\delta^1(\psi_1), -\Phi^1(\psi_1))=d^1(\psi_1,0)\in  \C^\bullet_{\RBA}(\frakg).\]
\end{proof}

\smallskip

\begin{defn}
	A  Rota-Baxter $3$-Lie algebra  $(\frakg,\mu,T)$ of  weight $\lambda$ is said to be rigid if every 1-parameter formal deformation is trivial.
\end{defn}

\begin{thm}
	Let $(\frakg,\mu,T)$ be a  Rota-Baxter $3$-Lie algebra of  weight $\lambda$. If $\rmH^2_{\RBA}(\frakg)=0$, then $(\frakg,\mu,T)$ is rigid.
\end{thm}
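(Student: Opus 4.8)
The plan is to show that rigidity follows from the vanishing of the second cohomology group by proving that any 1-parameter formal deformation $(\frakg[[t]],\mu_t,T_t)$ of $(\frakg,\mu,T)$ is equivalent to the trivial deformation, using an inductive reduction of the leading nontrivial term. First I would argue the base step: by Proposition~\ref{Prop: Infinitesimal is 2-cocyle}, the infinitesimal $(\mu_1,T_1)$ is a $2$-cocycle in $\C^\bullet_{\RBA}(\frakg)$. Since $\rmH^2_{\RBA}(\frakg)=0$, there exists a $1$-cochain, which in degree $1$ has the form $(\psi_1,0)$ with $\psi_1\in\C^1_\PLA(\frakg)=\Hom(\frakg,\frakg)$ (as $\C^0_{\RBO}(\frakg)$ contributes the second coordinate), such that $(\mu_1,T_1)=d^1(\psi_1,0)=(\delta^1(\psi_1),-\Phi^1(\psi_1))$. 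Applying the formal isomorphism $\psi_t=\Id_\frakg+\psi_1 t$ then produces an equivalent deformation $(\frakg[[t]],\mu_t',T_t')$ whose infinitesimal $(\mu_1',T_1')=(\mu_1,T_1)-d^1(\psi_1,0)$ vanishes, by the computation in the proof of the preceding theorem on equivalent deformations.

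Next I would set up the induction: suppose we have already arranged, after successive equivalences, a deformation $(\frakg[[t]],\mu_t,T_t)$ in which $\mu_i=0$ and $T_i=0$ for all $1\leqslant i\leqslant n-1$, so that the deformation agrees with the trivial one up to order $n-1$. The key step is to show that the first nonvanishing pair $(\mu_n,T_n)$ is again a $2$-cocycle in $\C^\bullet_{\RBA}(\frakg)$. This requires extracting the coefficient of $t^n$ from the two defining deformation equations~\eqref{Eq: deform eq for  products in RBA} and \eqref{Eq: Deform RB operator in RBA}; because all lower-order deformation terms vanish, the many cross-terms collapse and one recovers exactly the cocycle conditions $\delta^2(\mu_n)=0$ and $\Phi^2(\mu_n)=-\partial^1(T_n)$, i.e.\ $d^n$\,-closedness in degree $2$. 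Again invoking $\rmH^2_{\RBA}(\frakg)=0$, there is $(\psi_n,0)$ with $\psi_n\in\Hom(\frakg,\frakg)$ and $(\mu_n,T_n)=d^1(\psi_n,0)$, and applying the formal isomorphism $\psi_t=\Id_\frakg+\psi_n t^n$ yields an equivalent deformation whose leading nontrivial term has been pushed to order at least $n+1$.

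Finally I would assemble the infinite sequence of such equivalences into a single formal isomorphism. The composite $\cdots\circ(\Id+\psi_3 t^3)\circ(\Id+\psi_2 t^2)\circ(\Id+\psi_1 t)$ is well defined as a power series in $t$ because, at each order $t^N$, only finitely many factors contribute, so the coefficients stabilize; this limiting isomorphism carries the original deformation to the trivial one, proving that $(\frakg,\mu,T)$ is rigid. The main obstacle I anticipate is the middle step: carefully verifying that the order-$t^n$ coefficient of the Rota-Baxter deformation equation~\eqref{Eq: Deform RB operator in RBA}, under the inductive hypothesis that $\mu_i=T_i=0$ for $0<i<n$, reduces precisely to $\Phi^2(\mu_n)+\partial^1(T_n)=0$ rather than to some perturbed relation. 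This is the analogue of Equation~\eqref{Eq: Deform RB operator 1 in RBA} shifted to degree $n$, and the bookkeeping of which summands survive the vanishing of lower terms is where the genuine work lies; the $3$-Lie bracket equation~\eqref{Eq: deform eq for  products in RBA} reducing to $\delta^2(\mu_n)=0$ is the comparatively routine classical computation.
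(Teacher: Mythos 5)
Your overall strategy coincides with the paper's: use the fact that the infinitesimal $(\mu_1,T_1)$ is a $2$-cocycle, kill it by a formal isomorphism using $\rmH^2_{\RBA}(\frakg)=0$, and induct on the order of the first nonvanishing term (your remark that the infinite composite of isomorphisms stabilizes order by order is fine, and your identification of the inductive cocycle condition as the degree-$n$ analogue of Equation~\eqref{Eq: Deform RB operator 1 in RBA} is exactly right). However, there is one genuine gap in your base step, and it recurs at every stage of the induction: you assert that the cobounding $1$-cochain ``has the form $(\psi_1,0)$.'' The hypothesis $\rmH^2_{\RBA}(\frakg)=0$ only produces a general $1$-cochain $(\psi_1',x)\in \C^1_{\RBA}(\frakg)=\C^1_{\PLA}(\frakg)\oplus\C^0_{\RBO}(\frakg)$, where $\C^0_{\RBO}(\frakg)\cong\frakg$, with $(\mu_1,T_1)=d^1(\psi_1',x)=\big(\delta^1(\psi_1'),\,-\partial^0(x)-\Phi^1(\psi_1')\big)$, and there is no a priori reason for the second component $x$ to vanish. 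This matters because a formal isomorphism $\psi_t=\Id_\frakg+\psi_1t+\cdots$ changes the infinitesimal only by coboundaries of the special shape $d^1(\psi_1,0)$ --- the component $\C^0_{\RBO}(\frakg)$ of a $1$-cochain has no counterpart in the notion of equivalence of deformations --- so, as written, your isomorphism cannot absorb the term $\partial^0(x)$.

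The gap is fixable, and the paper closes it explicitly: one verifies the identity $\Phi^1(\delta^0(x))=\partial^0(x)$ and replaces $\psi_1'$ by $\psi_1:=\psi_1'+\delta^0(x)$. Since $\delta^1\delta^0=0$, this modified $\psi_1$ satisfies $\mu_1=\delta^1(\psi_1)$ and $T_1=-\partial^0(x)-\Phi^1(\psi_1')=-\Phi^1(\psi_1)$, i.e.\ $(\mu_1,T_1)=d^1(\psi_1,0)$, which is precisely the form your formal isomorphism requires; the same normalization must be performed at each inductive stage. With this lemma inserted, your argument goes through and agrees with the paper's proof, up to the harmless difference of convention that you push the deformation forward along $\Id_\frakg+\psi_1t$ while the paper pulls it back along $\Id_\frakg-\psi_1t$.
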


\begin{proof}Let $(\frakg[[t]], \mu_t, T_t)$ be a $1$-parameter formal deformation of $(\frakg, \mu, T)$. By Proposition~\ref{Prop: Infinitesimal is 2-cocyle},
$(\mu_1, T_1)$ is a $2$-cocycle. By $\rmH^2_{\RBA}(\frakg)=0$, there exists a $1$-cochain $$(\psi_1', x) \in \C^1_\RBA(\frakg)= C^1_{\PLA}(\frakg)\oplus \Hom(k, \frakg)$$ such that
$(\mu_1, T_1) =  d^1(\psi_1', x), $
that is, $\mu_1=\delta^1(\psi_1')$ and $T_1=-\partial^0(x)-\Phi^1(\psi_1')$. Let $\psi_1=\psi_1'+\delta^0(x)$. Then
 $\mu_1= \delta^1(\psi_1)$ and $T_1=-\Phi^1(\psi_1)$, as it can be readily seen that $\Phi^1(\delta^0(x))=\partial^0(x)$.

Setting $\psi_t = \Id_\frakg -\psi_1t$, we have a deformation $(\frakg[[t]], \overline{\mu}_t, \overline{T}_t)$, where
$$\overline{\mu}_t=\psi_t^{-1}\circ \mu_t\circ (\psi_t\times \psi_t\times \psi_t)$$
and $$\overline{T}_t=\psi_t^{-1}\circ T_t\circ \psi_t.$$
  It is easy to  verify  that $\overline{\mu}_1=0, \overline{T}_1=0$. Then
    $$\begin{array}{rcl} \overline{\mu}_t&=& \mu+\overline{\mu}_2t^2+\cdots,\\
 T_t&=& T+\overline{T}_2t^2+\cdots.\end{array}$$
   By Equations~(\ref{Eq: deform eq for  products in RBA}) and (\ref{Eq: Deform RB operator in RBA}), we see that $(\overline{\mu}_2,  \overline{T}_2)$ is still a $2$-cocycle, so by induction, we can show that
  $ (\frakg[[t]], \mu_t , T_t) $ is equivalent to  $(\frakg[[t]], \mu, T).$
Thus, $(\frakg,\mu,T)$ is rigid.

\end{proof}

\bigskip

\section{Abelian extensions of  Rota-Baxter $3$-Lie algebras}

 In this section, we study abelian extensions of  Rota-Baxter $3$-Lie algebras of  arbitrary weights  and show that they are classified by the second cohomology, as one would expect of a good cohomology theory.

 Notice that a vector space $M$ together with a linear transformation $T_M:M\to M$ is naturally a  Rota-Baxter $3$-Lie algebra of weight $\lambda$ where the bracket on $M$ is defined to be $[m, n, p]_M=0$ for all $m, n, p\in M.$

 \begin{defn}
 	An   \textbf{abelian extension  of  Rota-Baxter $3$-Lie algebras} of weight $\lambda$ is a short exact sequence of  morphisms of  Rota-Baxter $3$-Lie algebras of  weight $\lambda$
 \begin{eqnarray}\label{Eq: abelian extension} 0\to (M,[\cdot, \cdot, \cdot]_M, T_M)\stackrel{i}{\to} (\hat{\frakg},[\cdot, \cdot, \cdot]_{\hat{\frakg}},  \hat{T})\stackrel{p}{\to} (\frakg, [\cdot, \cdot, \cdot]_{\frakg}, T)\to 0,
 \end{eqnarray}
 that is, there exists a commutative diagram:
 	\[\begin{CD}
 		0@>>> {M} @>i >> \hat{\frakg} @>p >> \frakg @>>>0\\
 		@. @V {T_M} VV @V {\hat{T}} VV @V T VV @.\\
 		0@>>> {M} @>i >> \hat{\frakg} @>p >> \frakg @>>>0,
 	\end{CD}\]
 where the  Rota-Baxter $3$-Lie algebra  $(\hat{\frakg}, [\cdot, \cdot, \cdot]_{\hat{\frakg}}, T_{\hat{\frakg}})$ of  weight $\lambda$	satisfies  $[\cdot ,n, p]_{\hat{\frakg}}=0$ for all $n, p \in M$.

 We will call $(\hat{\frakg},[\cdot, \cdot, \cdot]_{\hat{\frakg}}, \hat{T})$ an abelian extension of $(\frakg,[\cdot, \cdot, \cdot]_{\frakg}, T)$ by $(M,[\cdot, \cdot, \cdot]_M, T_M)$.
 \end{defn}

 \begin{defn}
 	Let $(\hat{\frakg}_1,[\cdot, \cdot, \cdot]_{\hat{\frakg}_1}, \hat{T}_1)$ and $(\hat{\frakg}_2,[\cdot, \cdot, \cdot]_{\hat{\frakg}_2}, \hat{T}_2)$ be two abelian extensions of $(\frakg,[\cdot, \cdot, \cdot]_{\frakg}, T)$ by $(M,[\cdot, \cdot, \cdot]_M, T_M)$. They are said to be  isomorphic  if there exists an isomorphism of  Rota-Baxter $3$-Lie algebras $\zeta:(\hat{\frakg}_1,\hat{T}_1)\rar (\hat{\frakg}_2,\hat{T}_2)$ of  weight $\lambda$ such that the following commutative diagram holds:
 	\begin{eqnarray}\label{Eq: isom of abelian extension}\begin{CD}
 		0@>>> {(M,[\cdot, \cdot, \cdot]_M, T_M)} @>i_1 >> (\hat{\frakg}_1,[\cdot, \cdot, \cdot]_{\hat{\frakg}_1},{\hat{T}_1}) @>p_1 >> (\frakg,[\cdot, \cdot, \cdot]_{\frakg}, T) @>>>0\\
 		@. @| @V \zeta VV @| @.\\
 		0@>>> {(M,[\cdot, \cdot, \cdot]_M, T_M)} @>i_2 >> (\hat{\frakg}_2,[\cdot, \cdot, \cdot]_{\hat{\frakg}_2},{\hat{T}_2}) @>p_2 >> (\frakg,[\cdot, \cdot, \cdot]_{\frakg}, T) @>>>0.
 	\end{CD}\end{eqnarray}
 \end{defn}

 A   section of an abelian extension $(\hat{\frakg},[\cdot, \cdot, \cdot]_{\hat{\frakg}}, {\hat{T}})$ of $(\frakg,[\cdot, \cdot, \cdot]_{\frakg}, T)$ by $(M,[\cdot, \cdot, \cdot]_M, T_M)$ is a linear map $s:\frakg\rar \hat{\frakg}$ such that $p\circ s=\Id_\frakg$.

 We will show that isomorphism classes of  abelian extensions of $(\frakg,[\cdot, \cdot, \cdot]_{\frakg}, T)$ by $(M,[\cdot, \cdot, \cdot]_M, T_M)$ are in bijection with the second cohomology group   ${\rmH}_{\RBA}^2(\frakg,M)$.

 \bigskip

Let    $(\hat{\frakg},[\cdot, \cdot, \cdot]_{\hat{\frakg}}, \hat{T})$ be  an abelian extension of $(\frakg,[\cdot, \cdot, \cdot]_{\frakg}, T)$ by $(M,[\cdot, \cdot, \cdot]_M, T_M)$. Choose a section $s:\frakg\rar \hat{\frakg}$. Define $\rho: \wedge^2\frakg\rightarrow \mathfrak{gl}(M)$
 \begin{eqnarray*}
\rho(x)m=\rho(x_1, x_2)m:=[s(x_1), s(x_2), i(m)]_{\hat{\frakg}}.
 \end{eqnarray*}
 for all $x = (x_1, x_2) \in \wedge^2\frakg, m\in M$.
 \begin{prop}\label{Prop: new RB bimodules from abelian extensions}
 	With the above notations, $(M, \rho, T_M)$ is a representation over $(\frakg,[\cdot, \cdot, \cdot]_\frakg, T)$.
 \end{prop}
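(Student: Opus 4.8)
The plan is to verify directly that the pair $\rho$ and $T_M$ satisfies the two defining axioms of a representation of the $3$-Lie algebra $(\frakg,[\cdot,\cdot,\cdot]_\frakg)$ together with the Rota-Baxter compatibility condition of Definition~\ref{Def: Rota-Baxter representations}. Throughout I would identify $M$ with $i(M)\subseteq\hat{\frakg}$ and record three facts that make every computation collapse: (i) since $p\circ s=\Id_\frakg$ and $p\circ i=0$, one has $p\big([s(x_1),s(x_2),i(m)]_{\hat{\frakg}}\big)=0$, so $\rho(x_1,x_2)m\in i(M)$ --- this is what allows $\rho$ to be iterated and to land in $\mathfrak{gl}(M)$; (ii) the abelian hypothesis $[\,\cdot\,,n,p]_{\hat{\frakg}}=0$ for $n,p\in M$ kills every bracket having two arguments in $i(M)$; and (iii) the commutativity of the extension diagram gives $\hat{T}\circ i=i\circ T_M$ and $p\circ\hat{T}=T\circ p$, the latter yielding $\hat{T}(s(x))=s(T(x))+i(\xi(x))$ for a linear map $\xi\colon\frakg\to M$.

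First I would establish that $(M,\rho)$ is a representation of $(\frakg,[\cdot,\cdot,\cdot]_\frakg)$, both axioms following from the Fundamental Identity in $\hat{\frakg}$ applied to $s(x_1),\dots,s(x_4)$ and $i(m)$. For the first axiom, the Fundamental Identity with inner bracket $[s(x_3),s(x_4),i(m)]_{\hat{\frakg}}$ expands into exactly the four required terms once one uses that $s([x_i,x_j,x_k]_\frakg)$ differs from $[s(x_i),s(x_j),s(x_k)]_{\hat{\frakg}}$ by an element of $i(M)$ (because $p$ is a $3$-Lie morphism) and that the resulting correction bracket $[i(n),s(x_4),i(m)]_{\hat{\frakg}}$ vanishes by (ii). For the second axiom I would first move the inner bracket into the third slot by skew-symmetry, apply the Fundamental Identity to $[s(x_1),i(m),[s(x_2),s(x_3),s(x_4)]]_{\hat{\frakg}}$, and then simplify each of the three resulting terms using $[s(x_a),i(m),s(x_b)]_{\hat{\frakg}}=-\rho(x_a,x_b)m\in i(M)$ together with (ii); the signs produced by the skew-symmetric rearrangements reproduce the alternating signs of the second axiom.

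Next I would verify the Rota-Baxter compatibility. The left-hand side $\rho(T(x),T(y))(T_M(m))=[s(T(x)),s(T(y)),i(T_M(m))]_{\hat{\frakg}}$ I would rewrite using $i(T_M(m))=\hat{T}(i(m))$ and $s(T(x))=\hat{T}(s(x))-i(\xi(x))$; by (ii) every term carrying a factor $i(\xi(\cdot))$ drops out, leaving $[\hat{T}s(x),\hat{T}s(y),\hat{T}i(m)]_{\hat{\frakg}}$. To this I apply the Rota-Baxter relation~\eqref{Eq: Rota-Baxter relation} for $\hat{T}$ with $a=s(x),b=s(y),c=i(m)$. In each of the seven brackets on the right I substitute $\hat{T}s(x)=s(T(x))+i(\xi(x))$ and $\hat{T}i(m)=i(T_M(m))$; every summand containing $i(\xi(\cdot))$ again vanishes by (ii), and the surviving summands are precisely $\rho(T(x),T(y))m$, $\rho(T(x),y)(T_M(m))$, $\rho(x,T(y))(T_M(m))$, and their $\lambda$- and $\lambda^2$-weighted companions. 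Since each of these lies in $i(M)$, the outermost $\hat{T}$ acts through $\hat{T}\circ i=i\circ T_M$, and injectivity of $i$ delivers exactly the identity in Definition~\ref{Def: Rota-Baxter representations}.

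I expect the main obstacle to be the second representation axiom: unlike the first, it is not a verbatim instance of the Fundamental Identity, so it requires the correct sequence of skew-symmetric slot-swaps before the Fundamental Identity can be applied, and careful sign tracking afterwards. By contrast, the Rota-Baxter compatibility, though notationally heavy, is essentially mechanical once facts (i)--(iii) are in place, since the abelian vanishing removes every cross term and reduces the computation to transporting the weight-$\lambda$ Rota-Baxter relation of $\hat{\frakg}$ through $i$.
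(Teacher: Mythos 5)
Your proposal is correct and takes essentially the same route as the paper's proof: exchange $s(T(x))$ with $\hat{T}(s(x))$ using that their difference lies in $i(M)$ together with the abelian condition, apply the weight-$\lambda$ Rota-Baxter identity of $\hat{T}$ to $s(x), s(y), i(m)$, note that each resulting bracket lies in $i(M)$ so the outer $\hat{T}$ acts as $i\circ T_M$, and swap back. The only difference is presentational: you spell out the verification of the two $3$-Lie representation axioms via the Fundamental Identity (and track the correction terms $i(\xi(\cdot))$ explicitly), whereas the paper dismisses that part as straightforward and suppresses the corrections notationally.
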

 \begin{proof}
 	It is straightforward to check that $(M, \rho)$ is a representation over $(\frakg,[\cdot, \cdot, \cdot]_\frakg)$. Moreover, ${\hat{T}}(s(x))-s(T(x))\in M$ means that  $\rho({\hat{T}}(s(x)) m=\rho(s(T(x))) m$. Thus we have
 	\begin{align*}
 		&\rho(T(x_1),T(x_2))T_M(m)\\
= & \rho(sT(x_1),sT(x_2))T_M(m)\\
 		 = &\rho(\hat{T}(s(x_1)), \hat{T}(s(x_2)))T_M(m)\\
 		 = &T_M\big(\rho(\hat{T}(s(x_1)), \hat{T}(s(x_2))) m +\rho(\hat{T}(s(x_1)), s(x_2))T_M(m) +\rho(s(x_1), \hat{T}(s(x_2)))T_M(m) \\
&+  \lambda \ \rho(\hat{T}(s(x_1)), s(x_2)) m+\lambda \ \rho(s(x_1), \hat{T}(s(x_2))) m+\lambda \ \rho(s(x_1), s(x_2)) T_M(m)\\
&+\lambda^2\rho(s(x_1), s(x_2))m\big)\\
 		 = & T_M\big(\rho(T(x_1), T(x_2)) m +\rho(T(x_1), x_2)T_M(m) +\rho(x_1, T(x_2))T_M(m)\\
&+  \lambda \ \rho(T(x_1), x_2) m+\lambda \ \rho(x, T(x_2)) m+\lambda \ \rho(x_1, x_2) T_M(m)
 +\lambda^2\rho(x_1, x_2)m\big).
 	\end{align*}
 	Hence, $(M, \rho, T_M)$ is a  representation over $(\frakg, [\cdot, \cdot, \cdot]_\frakg,  T)$.
 \end{proof}

 We  further  define linear maps $\psi:\wedge^3\frakg\rar M$ and $\chi:\frakg\rar M$ respectively by
 \begin{align*}
 	\psi(x, y, z)&=[s(x), s(y), s(z)]_{\hat{\frakg}}-s([x, y, z]_\frakg),\quad\forall x, y, z\in \frakg,\\
 	\chi(x)&={\hat{T}}(s(x))-s(T(x)),\quad\forall x\in \frakg.
 \end{align*}

 \begin{prop}\label{prop:2-cocycle}
 	 The pair
 	$(\psi,\chi)$ is a 2-cocycle  of    Rota-Baxter $3$-Lie algebra $(\frakg,[\cdot, \cdot, \cdot]_\frakg, T)$ of  weight $\lambda$ with  coefficients  in the representation $(M,\rho, T_M)$ introduced in Proposition~\ref{Prop: new RB bimodules from abelian extensions}.
 \end{prop}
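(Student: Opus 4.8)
The plan is to exploit the vector-space splitting $\hat{\frakg}=s(\frakg)\oplus i(M)$ furnished by the section $s$, under which every element of $\hat{\frakg}$ is written uniquely as $s(x)+i(m)$, and to read off the cocycle condition by projecting the defining identities of $\hat{\frakg}$ onto the $M$-component. By the formula for the mapping-cone differential, $d^2(\psi,\chi)=(\delta^2\psi,\,-\partial^1\chi-\Phi^2\psi)$, so it suffices to establish the two equations $\delta^2\psi=0$ in $\C^3_{\PLA}(\frakg,M)$ and $\partial^1\chi+\Phi^2\psi=0$ in $\C^2_{\RBO}(\frakg,M)$ separately.

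For the first equation I would evaluate the Fundamental Identity of $\hat{\frakg}$ on the elements $s(x_1),\dots,s(x_5)$ and project onto $M$. The key simplifications are that $[\cdot,n,p]_{\hat{\frakg}}=0$ for $n,p\in M$, that by cyclic skew-symmetry $[s(x),s(y),i(m)]_{\hat{\frakg}}=\rho(x,y)m$ in whichever slot the $M$-entry sits, and that $[s(x),s(y),s(z)]_{\hat{\frakg}}=s([x,y,z]_\frakg)+i(\psi(x,y,z))$. The $s(\frakg)$-component of the projected identity reduces to the Fundamental Identity in $\frakg$, while the $M$-component is precisely the $3$-Lie cocycle identity $\delta^2\psi=0$; this is the standard abelian-extension computation, which I would carry out directly.

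For the second equation I would apply the Rota-Baxter identity \eqref{Eq: Rota-Baxter relation} of $\hat{\frakg}$ to $\hat{x}=s(x),\hat{y}=s(y),\hat{z}=s(z)$, substitute $\hat{T}(s(x))=s(Tx)+i(\chi(x))$ throughout, and use both $\hat{T}\circ i=i\circ T_M$ (from the commutative diagram of the extension) and the relation $T([x,y,z]_T)=[Tx,Ty,Tz]_\frakg$ of Proposition~\ref{Prop: new RB 3-Lie algebra}. Expanding by multilinearity and discarding all terms with two or more $M$-entries, the $s(\frakg)$-components of the two sides agree and merely recover the Rota-Baxter relation in $\frakg$. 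Projecting to $M$, the left-hand side collects into $\psi(Tx,Ty,Tz)+\rho(Ty,Tz)\chi(x)+\rho(Tz,Tx)\chi(y)+\rho(Tx,Ty)\chi(z)$, while the right-hand side becomes $\chi([x,y,z]_T)+T_M(\beta)$, where $\beta\in M$ gathers the $\psi$- and $\rho\chi$-contributions arising from the argument of $\hat{T}$.

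Rearranging, the pure $\psi$-terms assemble exactly into
$$\Phi^2\psi(x,y,z)=\psi(Tx,Ty,Tz)-T_M\Big(\psi(Tx,Ty,z)+\psi(Tx,y,Tz)+\psi(x,Ty,Tz)+\lambda\big(\psi(Tx,y,z)+\psi(x,Ty,z)+\psi(x,y,Tz)\big)+\lambda^2\psi(x,y,z)\Big),$$
and the remaining $\chi$-terms must be shown to reproduce $-\partial^1\chi$. The decisive point is that, after cancelling the common $\chi([x,y,z]_T)$, the total coefficient acting on $\chi(x)$ is $T_M\big(\rho(Ty,z)+\rho(y,Tz)+\lambda\rho(y,z)\big)-\rho(Ty,Tz)$, which is exactly $-\rho_T(y,z)$ by the definition of $\rho_T$ in Theorem~\ref{Prop: new-representation}; the analogous identities hold for the coefficients of $\chi(y)$ and $\chi(z)$. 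Matching these against the explicit form of $\partial^1\chi$ (whose $\rho_T$-coefficients unwind into precisely these combinations) is the main obstacle: it requires expanding $\rho_T$, the bracket $[\cdot,\cdot,\cdot]_T$ and the nested occurrences of $T_M$ with care, and it is where the weight terms $\lambda$ and $\lambda^2$ must be tracked scrupulously. Once these term-by-term identifications are in place, the $M$-projection of the Rota-Baxter identity reads exactly $\Phi^2\psi=-\partial^1\chi$, which together with $\delta^2\psi=0$ shows that $(\psi,\chi)$ is a $2$-cocycle.
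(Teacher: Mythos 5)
Your proposal is correct and takes essentially the same route as the paper: the paper leaves this proof to the reader, but the identical direct computation appears in its converse proposition later in Section 5, where expanding the Fundamental Identity on the extension gives $\delta^2(\psi)=0$ and expanding the Rota--Baxter identity gives $\partial^1(\chi)+\Phi^2(\psi)=0$. Your term-by-term identifications (the $\psi$-terms assembling into $\Phi^2(\psi)$, and the coefficient $T_M\big(\rho(T(y),z)+\rho(y,T(z))+\lambda\rho(y,z)\big)-\rho(T(y),T(z))=-\rho_T(y,z)$ on $\chi(x)$, with its analogues) are exactly the computations the paper intends.
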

The proof is by direct computations, so it is left to the reader.

 The choice of the section $s$ in fact determines a splitting
 $$\xymatrix{0\ar[r]&  M\ar@<1ex>[r]^{i} &\hat{\frakg}\ar@<1ex>[r]^{p} \ar@<1ex>[l]^{t}& \frakg \ar@<1ex>[l]^{s} \ar[r] & 0}$$
 subject to $t\circ i=\Id_M, t\circ s=0$ and $ it+sp=\Id_{\hat{\frakg}}$.
 Then there is an induced isomorphism of vector spaces
 $$\left(\begin{array}{cc} p& t\end{array}\right): \hat{\frakg}\cong   \frakg\oplus M: \left(\begin{array}{c} s\\ i\end{array}\right).$$
We can  transfer the  Rota-Baxter $3$-Lie algebra structure of   weight $\lambda$ on $\hat{\frakg}$ to $\frakg\oplus M$ via this isomorphism.
  It is direct to verify that this  endows $\frakg\oplus M$ with a $3$-Lie bracket $[\cdot, \cdot, \cdot]_\psi$ and a Rota-Baxter operator $T_\chi$ defined by
 \begin{align}
 	\label{eq:mul}[x+m, y+n, z+p]_\psi&=[x, y, z]_\frakg+\rho(x, y)p+\rho(z, x)n+\rho(y, z)m+\psi(x, y, z),\\
 	\label{eq:dif}T_\chi(x+m)&=T(x)+\chi(x)+T_M(m),
 \end{align}
 for arbitrary   $x, y, z\in \frakg,\,m,n, p\in M$.
 Moreover, we get an abelian extension
 $$0\to (M, [\cdot, \cdot, \cdot]_M, T_M)\stackrel{\left(\begin{array}{cc} 0& \Id\end{array}\right) }{\longrightarrow} (\frakg\oplus M,[\cdot, \cdot, \cdot]_\psi, T_\chi)\stackrel{\left(\begin{array}{c} \Id\\ 0\end{array}\right)}{\longrightarrow} (\frakg, [\cdot, \cdot, \cdot]_\frakg, T)\longrightarrow 0$$
 which is easily seen to be  isomorphic to the original one \eqref{Eq: abelian extension}.

 \medskip

 Now we investigate the influence of different choices of   sections.

 \begin{prop}\label{prop: different sections give}
 \begin{itemize}
 \item[(a)] Different choices of the section $s$ give the same  representation on $(M, [\cdot, \cdot, \cdot]_M,$\\$ T_M)$;

 \item[(b)]   the cohomology class of $(\psi,\chi)$ does not depend on the choice of sections.

 \end{itemize}

 \end{prop}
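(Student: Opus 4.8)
The plan is to exploit the fact that any two sections differ by a map into the kernel $M$, and then to trace how this single difference propagates into $\rho$, $\psi$ and $\chi$. Concretely, let $s_1,s_2:\frakg\rar\hat{\frakg}$ be two sections. Since $p\circ s_1=p\circ s_2=\Id_\frakg$, the difference $s_1-s_2$ lands in $\ker p=\im i$, so I may write $s_1=s_2+i\circ\phi$ for a unique linear map $\phi:\frakg\rar M$. Everything below is controlled by this $\phi$.

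For part (a) I would expand
\[
\rho_1(x_1,x_2)m=[s_1(x_1),s_1(x_2),i(m)]_{\hat{\frakg}}=[s_2(x_1)+i\phi(x_1),\,s_2(x_2)+i\phi(x_2),\,i(m)]_{\hat{\frakg}}
\]
by multilinearity of the bracket. Every term other than $[s_2(x_1),s_2(x_2),i(m)]_{\hat{\frakg}}$ contains at least two arguments lying in $M$, and hence vanishes by the abelian condition $[\cdot,n,p]_{\hat{\frakg}}=0$. Thus $\rho_1=\rho_2$, so the representation is independent of the section; I will write $\rho$ for this common map in part (b).

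For part (b) I will show that $(\psi_1,\chi_1)$ and $(\psi_2,\chi_2)$ differ by the coboundary $d^1(\phi,0)$. Expanding $\psi_1(x,y,z)=[s_1(x),s_1(y),s_1(z)]_{\hat{\frakg}}-s_1([x,y,z]_\frakg)$ and again discarding every term with two or more $M$-arguments, the three surviving cross terms become $\rho(y,z)\phi(x)$, $\rho(z,x)\phi(y)$, $\rho(x,y)\phi(z)$ after moving each $i\phi(\cdot)$ into the last slot via the tri-skew-symmetry of the bracket and invoking the definition of $\rho$; together with $-\phi([x,y,z]_\frakg)$ this is precisely $\delta^1(\phi)$ evaluated at $(x\wedge y,z)$. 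For the operator part, expanding $\chi_1(x)=\hat{T}(s_1(x))-s_1(T(x))$ and using the intertwining relation $\hat{T}\circ i=i\circ T_M$ coming from the extension diagram gives $\chi_1-\chi_2=T_M\circ\phi-\phi\circ T=-\Phi^1(\phi)$, recalling $\Phi^1(\phi)=\phi\circ T-T_M\circ\phi$. Hence
\[
(\psi_1,\chi_1)-(\psi_2,\chi_2)=(\delta^1(\phi),\,-\Phi^1(\phi))=d^1(\phi,0),
\]
so the two cocycles represent the same class in $\rmH^2_{\RBA}(\frakg,M)$.

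The only delicate point is the sign bookkeeping in matching the three cross terms of $\psi_1-\psi_2$ against the explicit $n=1$ instance of the $3$-Lie coboundary, including its boundary-type terms $\rho(y_n,x_{n+1})$ and $\rho(x_{n+1},x_n)$: one must track the signs produced when each $i\phi(\cdot)$ is transposed past the $s_2$-arguments and use $\rho(x,z)=-\rho(z,x)$. Once the abelian condition has eliminated all terms quadratic in $M$, the verification collapses to a direct term-by-term comparison, so this is the \emph{main} (though entirely routine) obstacle.
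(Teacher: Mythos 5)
Your proposal is correct and follows essentially the same route as the paper: both write the difference of sections as a map $\frakg\to M$ (your $\phi$, the paper's $\gamma$), kill all terms with two or more $M$-arguments via the abelian condition to get part (a), and then identify $(\psi_1,\chi_1)-(\psi_2,\chi_2)$ with $(\delta^1(\phi),-\Phi^1(\phi))=d^1(\phi,0)$. If anything, your treatment is slightly cleaner than the paper's, which carries the quadratic-in-$\gamma$ terms through its display before absorbing them, and leaves the intertwining relation $\hat{T}\circ i=i\circ T_M$ implicit where you state it.
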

 \begin{proof}Let $s_1$ and $s_2$ be two distinct sections of $p$.
  We define $\gamma:\frakg\rar M$ by $\gamma(x)=s_1(x)-s_2(x)$.

  Since the    Rota-Baxter $3$-Lie algebra v$(\hat{\frakg}, [\cdot, \cdot, \cdot]_{\hat{\frakg}}, T_{\hat{\frakg}})$ of  weight $\lambda$	satisfies   $[\cdot ,n, p]_{\hat{\frakg}}=0$ for all $n, p \in M,$, we have
  $$\rho(s_1(x), s_1(y))m=  \rho(s_2(x)+\gamma(x), s_2(y)+\gamma(y)) m=\rho(s_2(x),s_2(y) ) m.$$ So different choices of the section $s$ give the same  representation on $(M, [\cdot, \cdot, \cdot]_M, T_M)$;

  We   show that the cohomology class $[(\psi,\chi)]$ does not depend on the choice of sections.   Then
 	\begin{align*}
 		\psi_1(x, y, z)=&[s_1(x),  s_1(y), s_1(z)]_{\hat{\frakg}}-s_1([x, y, z]_\frakg)\\
 		=&[s_2(x)+\gamma(x), s_2(y)+\gamma(y), s_2(z)+\gamma(z)]_{\hat{\frakg}}-\big(s_2([x, y, z]_\frakg)+\gamma([x, y, z]_\frakg)\big)\\
 		 =&([s_2(x),  s_2(y), s_2(z)]_{\hat{\frakg}}-s_2([x, y, z]_\frakg))+[s_2(x), \gamma(y), \gamma(z)]_{\hat{\frakg}}+[s_2(x), s_2(y), \gamma(z)]_{\hat{\frakg}}\\
  &+[s_2(x), \gamma(y), s_2(z)]_{\hat{\frakg}}+[\gamma(x),  s_2(y), s_2(z)]_{\hat{\frakg}}+[\gamma(x),  s_2(y), \gamma(z)]_{\hat{\frakg}} \\
  &+[\gamma(x), \gamma(y), s_2(z)]_{\hat{\frakg}}-\gamma([x, y, z]_\frakg)\\
 		 =&([s_2(x),  s_2(y), s_2(z)]_{\hat{\frakg}}-s_2([x, y, z]_\frakg))+[x, \gamma(y), \gamma(z)]_{\hat{\frakg}}+ [x, y, \gamma(z)]_{\hat{\frakg}} \\
  &+ [x, \gamma(y), z]_{\hat{\frakg}} + [\gamma(x),  y, z]_{\hat{\frakg}} +[\gamma(x),  y, \gamma(z)]_{\hat{\frakg}}+[\gamma(x), \gamma(y), z]_{\hat{\frakg}}-\gamma([x, y, z]_\frakg)\\
 		 =&\psi_2(x, y, z)+\delta(\gamma)(x, y, z)
 	\end{align*}
 	and
 	\begin{align*}
 		\chi_1(x)&={\hat{T}}(s_1(x))-s_1(T(x))\\
 		&={\hat{T}}(s_2(x)+\gamma(x))-(s_2(T(x))+\gamma(T(x)))\\
 		&=({\hat{T}}(s_2(x))-s_2(T(x)))+{\hat{T}}(\gamma(x))-\gamma(T(x))\\
 		&=\chi_2(x)+T_M(\gamma(x))-\gamma(T(x))\\
 		&=\chi_2(x)-\Phi^1(\gamma)(x).
 	\end{align*}
 	That is, $(\psi_1,\chi_1)=(\psi_2,\chi_2)+d^1(\gamma)$. Thus $(\psi_1,\chi_1)$ and $(\psi_2,\chi_2)$ correspond to the same cohomology class  {in $\rmH_{\RBA}^2(\frakg,M)$}.

 \end{proof}

 We show now the isomorphic abelian extensions give rise to the same cohomology class.
 \begin{prop}Let $M$ be a vector space and  $T_M\in\End_\bfk(M)$. Then $(M,[\cdot, \cdot, \cdot]_M,  T_M)$ is a    Rota-Baxter $3$-Lie algebra with trivial $3$-Lie bracket of weight $\lambda$.
 Let $(\frakg,[\cdot, \cdot, \cdot]_\frakg, T)$ be a    Rota-Baxter $3$-Lie algebra of  weight $\lambda$.
 Two isomorphic abelian extensions of   Rota-Baxter $3$-Lie algebra $(\frakg,[\cdot, \cdot, \cdot]_\frakg,  T)$ of  weight $\lambda$ by  $(M, [\cdot, \cdot, \cdot]_M,  T_M)$  give rise to the same cohomology class  in $\rmH_{\RBA}^2(\frakg,M)$.
 \end{prop}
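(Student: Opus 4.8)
The plan is to exploit the freedom, established in Proposition~\ref{prop: different sections give}, to compute the $2$-cocycle of an abelian extension using \emph{any} section, and to choose sections on the two extensions that are matched by the given isomorphism $\zeta$. Concretely, suppose $(\hat{\frakg}_1,[\cdot, \cdot, \cdot]_{\hat{\frakg}_1}, \hat{T}_1)$ and $(\hat{\frakg}_2,[\cdot, \cdot, \cdot]_{\hat{\frakg}_2}, \hat{T}_2)$ are isomorphic abelian extensions, with $\zeta:(\hat{\frakg}_1,\hat{T}_1)\to (\hat{\frakg}_2,\hat{T}_2)$ the isomorphism of Rota-Baxter $3$-Lie algebras fitting into diagram~\eqref{Eq: isom of abelian extension}, so that $p_2\circ\zeta=p_1$, $\zeta\circ i_1=i_2$ and $\zeta\circ\hat{T}_1=\hat{T}_2\circ\zeta$. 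First I would choose an arbitrary section $s_1:\frakg\to\hat{\frakg}_1$ of $p_1$ and set $s_2:=\zeta\circ s_1$. The relation $p_2\circ\zeta=p_1$ immediately gives $p_2\circ s_2=p_1\circ s_1=\Id_\frakg$, so $s_2$ is a section of $p_2$.

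Next I would check that the two extensions induce the \emph{same} representation of $(\frakg,[\cdot, \cdot, \cdot]_\frakg,T)$ on $(M,T_M)$, which is needed for the two cocycles to lie in the same complex $\C^\bullet_{\RBA}(\frakg,M)$. Since $\zeta$ is a $3$-Lie algebra homomorphism with $\zeta\circ i_1=i_2$, applying $\zeta$ to the identity $[s_1(x_1),s_1(x_2),i_1(m)]_{\hat{\frakg}_1}=i_1(\rho_1(x_1,x_2)m)$ from Proposition~\ref{Prop: new RB bimodules from abelian extensions} yields $[s_2(x_1),s_2(x_2),i_2(m)]_{\hat{\frakg}_2}=i_2(\rho_1(x_1,x_2)m)$, and by injectivity of $i_2$ this gives $\rho_2=\rho_1$. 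With the representations identified, I would then compute the two $2$-cocycles $(\psi_1,\chi_1)$ and $(\psi_2,\chi_2)$ attached to the chosen sections. Applying $\zeta$ to the defining formula for $\psi_1$ and using that $\zeta$ is a $3$-Lie homomorphism commuting with the sections gives $i_2(\psi_2(x,y,z))=\zeta\big(i_1(\psi_1(x,y,z))\big)=i_2(\psi_1(x,y,z))$, hence $\psi_2=\psi_1$; likewise, using $\zeta\circ\hat{T}_1=\hat{T}_2\circ\zeta$ one obtains $i_2(\chi_2(x))=\zeta\big(i_1(\chi_1(x))\big)=i_2(\chi_1(x))$, so $\chi_2=\chi_1$.

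Thus, with these matched sections, the two extensions produce \emph{literally the same} cocycle $(\psi_1,\chi_1)=(\psi_2,\chi_2)$, and a fortiori the same class in $\rmH^2_{\RBA}(\frakg,M)$. Finally, invoking Proposition~\ref{prop: different sections give}, the class obtained from each extension is independent of the chosen section, so these classes are genuine invariants of the two extensions; since they coincide for the matched sections $s_1$ and $s_2=\zeta\circ s_1$, the two isomorphic extensions define the same cohomology class. I do not expect a serious obstacle: the argument is essentially a diagram chase, and the only point requiring care is to verify at the outset that $\rho_1=\rho_2$, so that $(\psi_1,\chi_1)$ and $(\psi_2,\chi_2)$ genuinely lie in one and the same cochain complex; once that is in place, the equalities $\psi_1=\psi_2$ and $\chi_1=\chi_2$ follow by directly transporting the defining formulas through $\zeta$.
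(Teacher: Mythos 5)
Your proposal is correct and follows essentially the same route as the paper: transport a section $s_1$ through the isomorphism to get $s_2=\zeta\circ s_1$, check that the induced representations agree, and then push the defining formulas for $(\psi_1,\chi_1)$ through $\zeta$ (using $\zeta\circ i_1=i_2$, $p_2\circ\zeta=p_1$, and $\zeta\circ\hat{T}_1=\hat{T}_2\circ\zeta$) to conclude the cocycles coincide on the nose. The only difference is cosmetic: you keep the embeddings $i_1,i_2$ explicit and use injectivity of $i_2$, where the paper identifies $M$ with its image and uses $\zeta|_M=\Id_M$.
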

 \begin{proof}
  Assume that $(\hat{\frakg}_1, [\cdot, \cdot, \cdot]_{\hat{\frakg}_1},  {\hat{T}_1})$ and $(\hat{\frakg}_2, [\cdot, \cdot, \cdot]_{\hat{\frakg}_2}, {\hat{T}_2})$ are two isomorphic abelian extensions of $(\frakg,[\cdot, \cdot, \cdot]_{\frakg}, T)$ by $(M,[\cdot, \cdot, \cdot]_M, T_M)$ as is given in Equation~\eqref{Eq: isom of abelian extension}. Let $s_1$ be a section of $(\hat{\frakg}_1,[\cdot, \cdot, \cdot]_{\hat{\frakg}_1},  {\hat{T}_1})$. As $p_2\circ\zeta=p_1$, we have
 	\[p_2\circ(\zeta\circ s_1)=p_1\circ s_1=\Id_{\frakg}.\]
 	Therefore, $\zeta\circ s_1$ is a section of $(\hat{\frakg}_2,[\cdot, \cdot, \cdot]_{\hat{\frakg}_2}, {\hat{T}_2})$. Denote $s_2:=\zeta\circ s_1$. Since $\zeta$ is a homomorphism of   Rota-Baxter $3$-Lie algebras of  weight $\lambda$ such that $\zeta|_M=\Id_M$, $\zeta(\rho(x, y) m)=\zeta(\rho(s_1(x), s_1(y))m)=\rho(s_2(x), s_2(y)) m=\rho(x,y )m$, so $\zeta|_M: M\to M$ is compatible with the induced  representation.
 We have
 	\begin{align*}
 		\psi_2(x, y, z)&=[s_2(x), s_2(y),  s_2(z)]_{\hat{\frakg}_2}-s_2[x, y, z]_{\frakg}\\
 &=[\zeta(s_1(x)), \zeta(s_1(y)),  \zeta( s_1(z))]_{\hat{\frakg}_2}-\zeta(s_1([x, y, z]_{\frakg})\\
 		&=\zeta([s_1(x), s_1(y),  s_1(z)]_{\hat{\frakg}_1}-s_1([x, y, z]_{\frakg})\\
 &=\zeta(\psi_1(x, y, z))\\
 		&=\psi_1(x, y, z)
 	\end{align*}
 	and
 	\begin{align*}
 		\chi_2(x)&={\hat{T}_2}(s_2(x))-s_2(T(x))={\hat{T}_2}(\zeta(s_1(x)))-\zeta(s_1(T(x)))\\
 		&=\zeta({\hat{T}_1}(s_1(x))-s_1(T(x)))=\zeta(\chi_1(x))\\
 		&=\chi_1(x).
 	\end{align*}
 	Consequently, two isomorphic abelian extensions give rise to the same element in {$\rmH_{\RBA}^2(\frakg,M)$}.
\end{proof}
 \bigskip

 Now we consider the reverse direction.

 Let $(M, \rho, T_M)$ be a representation over  Rota-Baxter $3$-Lie algebra $(\frakg, [\cdot, \cdot, \cdot]_\frakg,  T)$ of  weight $\lambda$. Given two linear maps  $\psi:\wedge^3\frakg\rar M$ and $\chi:\frakg\rar M$, one can define  a $3$-Lie bracket $[\cdot, \cdot, \cdot]_\psi$ and an operator $T_\chi$  on  $\frakg\oplus M$ by Equations~(\ref{eq:mul}) and (\ref{eq:dif}).
 The following fact is important:
 \begin{prop}\label{prop:2-cocycle}
 	The triple $(\frakg\oplus M,[\cdot, \cdot, \cdot]_\psi, T_\chi)$ is a  Rota-Baxter $3$-Lie algebra of  weight $\lambda$   if and only if
 	$(\psi,\chi)$ is a 2-cocycle in the cochain complex of the  Rota-Baxter $3$-Lie algebra  $(\frakg,[\cdot, \cdot, \cdot]_\frakg, T)$ of  weight $\lambda$ with  coefficients  in $(M,\rho, T_M)$.
 In this case,    we obtain an abelian extension
  $$0\to (M, [\cdot, \cdot, \cdot]_M, T_M)\stackrel{\left(\begin{array}{cc} 0& \Id\end{array}\right) }{\longrightarrow} (\frakg\oplus M,[\cdot, \cdot, \cdot]_\psi, T_\chi)\stackrel{\left(\begin{array}{c} \Id\\ 0\end{array}\right)}{\longrightarrow} (\frakg, [\cdot, \cdot, \cdot]_\frakg, T)\to 0$$
  and the canonical section $s=\left(\begin{array}{cc}   \Id  & 0\end{array}\right): (\frakg, T)\to (\frakg\oplus M, T_\chi)$ endows $M$ with the original representation.
 \end{prop}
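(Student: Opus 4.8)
The plan is to unwind the definition of a Rota-Baxter $3$-Lie algebra into its two constituent axioms and match each to one component of the cocycle equation $d^2(\psi,\chi)=0$. Recall from the definition of $d^\bullet$ that $d^2(\psi,\chi)=\big(\delta^2(\psi),\,-\partial^1(\chi)-\Phi^2(\psi)\big)$, so $(\psi,\chi)$ being a $2$-cocycle means precisely $\delta^2(\psi)=0$ and $\Phi^2(\psi)+\partial^1(\chi)=0$. On the other hand, $(\frakg\oplus M,[\cdot,\cdot,\cdot]_\psi,T_\chi)$ is a Rota-Baxter $3$-Lie algebra of weight $\lambda$ exactly when (i) $(\frakg\oplus M,[\cdot,\cdot,\cdot]_\psi)$ satisfies the Fundamental Identity and (ii) $T_\chi$ satisfies the Rota-Baxter relation~\eqref{Eq: Rota-Baxter relation}. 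I would show (i) $\Leftrightarrow \delta^2(\psi)=0$ and, granting (i), (ii) $\Leftrightarrow \Phi^2(\psi)+\partial^1(\chi)=0$; since each expansion below is an equivalence of the corresponding component equations, the stated ``if and only if'' then drops out.

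For (i), I substitute elements $x_\ell+m_\ell\in\frakg\oplus M$ into the Fundamental Identity for $[\cdot,\cdot,\cdot]_\psi$ defined in~\eqref{eq:mul}. The component lying in $\frakg$ is simply the Fundamental Identity of $(\frakg,[\cdot,\cdot,\cdot]_\frakg)$, hence holds automatically. Using that the bracket on $M$ is trivial and that $\rho$ is a representation of the $3$-Lie algebra $\frakg$ (so both representation identities are available), every term of the $M$-component that still carries some $m_\ell$ cancels, and what survives is precisely the vanishing of $\delta^2(\psi)$ evaluated on the five arguments $x_1,y_1,x_2,y_2,x_3$. Thus the Fundamental Identity for $[\cdot,\cdot,\cdot]_\psi$ is equivalent to $\delta^2(\psi)=0$; this is the standard abelian-extension computation for $3$-Lie algebras.

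Granting (i), I turn to the Rota-Baxter relation for $T_\chi$. Writing $T_\chi(x+m)=T(x)+\chi(x)+T_M(m)$ from~\eqref{eq:dif} and plugging $X=x+m$, $Y=y+n$, $Z=z+p$ into~\eqref{Eq: Rota-Baxter relation}, the $\frakg$-component is just the Rota-Baxter relation for $T$ on $\frakg$ and holds automatically. In the $M$-component I would separate the terms according to whether they carry a module argument $m,n,p$ or only the base arguments $x,y,z$. The terms carrying $m,n,p$ are exactly the ones appearing in the semidirect-product construction, and they assemble into (three cyclic copies of) the defining relation of a representation over a Rota-Baxter $3$-Lie algebra; hence they cancel by the hypothesis that $(M,\rho,T_M)$ is such a representation. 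The remaining terms, which involve $\psi$ through the bracket and $\chi$ through $T_\chi$, reorganize---after matching powers of $\lambda$ and the various insertions of $T$---into $\Phi^2(\psi)(x,y,z)+\partial^1(\chi)(x,y,z)$. Hence, modulo (i), the Rota-Baxter relation for $T_\chi$ is equivalent to $\Phi^2(\psi)+\partial^1(\chi)=0$, and combining the two parts yields the equivalence. I expect this matching of the $M$-component against the definition of $\Phi^2$ to be the main obstacle: the alternating sum over subsets of positions at which $T$ is inserted, weighted by powers of $\lambda$, must be reproduced term by term. This is precisely the reverse of the bookkeeping already carried out in Proposition~\ref{Prop: Infinitesimal is 2-cocyle} and in deriving~\eqref{Eq: Deform RB operator 1 in RBA}, so I can either transcribe that computation backwards or invoke its equivalences directly.

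Finally, for the ``in this case'' assertions, the displayed sequence is exact by the direct-sum structure; the inclusion $\left(\begin{smallmatrix} 0& \Id\end{smallmatrix}\right)$ and projection $\left(\begin{smallmatrix} \Id\\ 0\end{smallmatrix}\right)$ are morphisms of Rota-Baxter $3$-Lie algebras of weight $\lambda$ because they intertwine $T_\chi$ with $T_M$ and with $T$ by~\eqref{eq:dif} and respect the brackets (as $[\cdot,\cdot,\cdot]_\psi$ restricts to the trivial bracket on $M$ by~\eqref{eq:mul}), and $M$ is an abelian ideal; hence this is an abelian extension. For the canonical section $s=\left(\begin{smallmatrix}\Id& 0\end{smallmatrix}\right)$, the induced representation sends $(x_1,x_2,m)$ to $[s(x_1),s(x_2),m]_\psi=\rho(x_1,x_2)m$ directly from~\eqref{eq:mul}, recovering the original $\rho$, and compatibility with $T_M$ is immediate from~\eqref{eq:dif}.
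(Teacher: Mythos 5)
Your proposal is correct and follows essentially the same route as the paper's proof: expand the two defining axioms of a Rota-Baxter $3$-Lie algebra on $\frakg\oplus M$, identify the Fundamental Identity for $[\cdot,\cdot,\cdot]_\psi$ with $\delta^2(\psi)=0$, and identify the Rota-Baxter relation for $T_\chi$ (after the terms in $m,n,p$ cancel via the representation axioms) with $\partial^1(\chi)+\Phi^2(\psi)=0$. Your treatment of the ``in this case'' claims is also what the paper leaves as clear, so nothing further is needed.
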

 \begin{proof}
 	If $(\frakg\oplus M,[\cdot, \cdot, \cdot]_\psi,T_\chi)$ is a   Rota-Baxter $3$-Lie algebra of  weight $\lambda$, then the condition that $[\cdot, \cdot, \cdot]_\psi$ is a $3$-Lie bracket implies
 	\begin{align*}\label{eq:mc}
 		&\psi(x_1, x_2, [x_3, x_4, x_5]_\frakg) + \rho(x_1, x_2)\psi(x_3, x_4, x_5)\\
=&\psi([x_1, x_2, x_3]_\frakg, x_4, x_5) + \psi(y_1, [x_1, x_2, x_4]_\frakg, x_5)+\psi(x_3, x_4, [x_1, x_2, x_5]_\frakg) \\
&+\rho(x_4, x_5)\psi(x_1, x_2, x_3)+\rho(x_5, x_3)\psi(x_1, x_2, x_4)+\rho(x_3, x_4)\psi(x_1, x_2, x_5),\nonumber
 	\end{align*}
 	which means $\delta^2(\psi)=0$ in $\C^\bullet_{\tLie}(\frakg,M)$.
 	Since $T_\chi$ is a  Rota-Baxter operator of  weight $\lambda$,
 	for arbitrary  $x, y, z\in \frakg, m,n, p\in M$, we have
 	\begin{eqnarray*}
&&[T_\chi(x+m),  T_\chi(y+n), T_\chi(z+p)]_\psi\\
&=&T_\chi\big([T_\chi(x+m), T_\chi(y+n), z+p]_\psi+[T_\chi(x+m), y+n, T_\chi(z+p)]_\psi\\
&&+[x+m, T_\chi(y+n), T_\chi(z+p)]_\psi+\lambda \ [T_\chi(x+m), y+n, z+p]_\psi+\lambda \ [x+m, T_\chi(y+n), z+p]_\psi\\
&&+\lambda \ [x+m, y+n, T_\chi(z+p)]_\psi+\lambda^2 [x+m, y+n, z+p]_\psi\big)\end{eqnarray*}	
 	Then $\chi,\psi$ satisfy the following equation:
 	\begin{align*}
 		&\rho(T(x), T(y))\chi(z)+\rho(T(z), T(x))\chi(y)+\rho(T(y), T(z))\chi(x)+\psi(T(x), T(y), T(z))\\
 		=\quad & T_M(\rho(T(y), z)\chi(x)+\rho(z, T(x))\chi(y)+\psi(T(x), T(y), z))+\chi([T(x), T(y), z]_\frakg)\\
 &+ T_M(\rho(T(x), y)\chi(z)+\rho(y, T(z))\chi(x)+\psi(T(x), y, T(z)))+\chi([T(x), y, T(z)]_\frakg)\\
 &+ T_M(\rho(x, T(y))\chi(z)+\rho(T(z), x)\chi(y)+\psi(x, T(y), T(z)))+\chi([x, T(y), T(z)]_\frakg)\\
 &+\lambda \ T_M(\rho(y, z)\chi(x)+\psi(T(x), y, z))+\lambda \ T_M(\rho(z, x)\chi(y)+\psi(x, T(y), z))\\
 &+\lambda\chi([x, T(y), z]_\frakg)+\lambda \ T_M(\rho(x, y)\chi(z)+\psi(x, y, T(z)))+\lambda\chi([T(x), y, z]_\frakg)\\
  &+\lambda\chi([x, y, T(z)]_\frakg)+ \lambda^2 \chi([x, y, z]_\frakg)+\lambda^2T_M(\psi(x, y, z)).
 	\end{align*}
 	
 	That is,
 	\[ \partial^1( \chi )+\Phi^2(\psi)=0.\]
 	Hence, $(\psi,\chi)$ is a  2-cocycle in $C^\bullet_{\RBA}(\frakg,M)$.
 	
 	 Conversely, if $(\psi,\chi)$ is a 2-cocycle, one can easily check that $(\frakg\oplus M,[\cdot, \cdot, \cdot]_\psi, T_\chi)$ is a  Rota-Baxter $3$-Lie algebra of weight $\lambda$.

  The last statement is clear.
 \end{proof}

 Finally, we show the following result:
 \begin{prop}
 	Two cohomologous $2$-cocycles give rise to isomorphic abelian extensions.
 \end{prop}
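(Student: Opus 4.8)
The plan is to exhibit an explicit isomorphism between the two extensions produced from the two cocycles via Proposition~\ref{prop:2-cocycle}. Suppose $(\psi_1,\chi_1)$ and $(\psi_2,\chi_2)$ are cohomologous, so that $(\psi_1,\chi_1)-(\psi_2,\chi_2)=d^1(\gamma)$ for some $\gamma\in\C^1_\PLA(\frakg,M)=\Hom(\frakg,M)$. By the definition of $d^1$ this unpacks into the two identities $\psi_1-\psi_2=\delta^1(\gamma)$ and $\chi_1-\chi_2=-\Phi^1(\gamma)=T_M\circ\gamma-\gamma\circ T$. On the common underlying space $\frakg\oplus M$, carrying the two structures $([\cdot,\cdot,\cdot]_{\psi_1},T_{\chi_1})$ and $([\cdot,\cdot,\cdot]_{\psi_2},T_{\chi_2})$ given by~\eqref{eq:mul} and~\eqref{eq:dif}, I would take the candidate map $\zeta\colon\frakg\oplus M\to\frakg\oplus M$, $\zeta(x+m)=x+\gamma(x)+m$, i.e. $\zeta=\Id+\gamma\circ\mathrm{pr}_\frakg$, whose inverse is $x+m\mapsto x-\gamma(x)+m$. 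This is visibly a linear isomorphism restricting to $\Id_M$ on $M$ and inducing $\Id_\frakg$ on the quotient $\frakg$, so it automatically fits the commutative diagram~\eqref{Eq: isom of abelian extension}.

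The real content is then to check that $\zeta$ is a morphism of Rota-Baxter $3$-Lie algebras. For the bracket, I would expand $\zeta([x+m,y+n,z+p]_{\psi_1})$ and $[\zeta(x+m),\zeta(y+n),\zeta(z+p)]_{\psi_2}$ using~\eqref{eq:mul} and cancel the common $[\cdot,\cdot,\cdot]_\frakg$- and $\rho$-terms; the difference of the two sides collapses to
\begin{align*}
\psi_1(x,y,z)-\psi_2(x,y,z)-\bigl(\rho(x,y)\gamma(z)+\rho(y,z)\gamma(x)+\rho(z,x)\gamma(y)-\gamma([x,y,z]_\frakg)\bigr),
\end{align*}
and the bracketed expression is exactly $\delta^1(\gamma)(x,y,z)$. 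Thus the homomorphism property for the bracket is equivalent to the first identity $\psi_1-\psi_2=\delta^1(\gamma)$. It is precisely the abelian hypothesis $[\cdot,n,p]_M=0$ that kills the terms quadratic and cubic in $\gamma$, leaving only the $\delta^1$-differential. For the operators, comparing $\zeta(T_{\chi_1}(x+m))$ with $T_{\chi_2}(\zeta(x+m))$ via~\eqref{eq:dif} reduces the required equality $\zeta\circ T_{\chi_1}=T_{\chi_2}\circ\zeta$ to $\chi_1(x)-\chi_2(x)=T_M(\gamma(x))-\gamma(T(x))$, which is exactly the second identity $\chi_1-\chi_2=-\Phi^1(\gamma)$.

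Consequently both morphism conditions hold precisely because $(\psi_1,\chi_1)$ and $(\psi_2,\chi_2)$ differ by the coboundary $d^1(\gamma)$, so $\zeta$ is the desired isomorphism of abelian extensions. I expect the only delicate point to be the bracket computation: one must keep the three $\rho$-terms of~\eqref{eq:mul} in their correct cyclic positions and track the signs in $\delta^1$ so that the leftover combination matches $\delta^1(\gamma)$ on the nose. Since this is essentially the computation in Proposition~\ref{prop: different sections give} run in reverse, no genuinely new difficulty arises; the operator identity and the compatibility with the extension diagram are immediate.
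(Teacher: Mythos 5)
Your construction of $\zeta$ and both verification computations (for the bracket and for the operator) are correct and essentially coincide with the paper's own argument, but your opening step mis-states what ``cohomologous'' means in this complex, and that leaves a genuine (if small) gap. In the paper's cochain complex, $\C^1_{\RBA}(\frakg,M)=\C^1_\PLA(\frakg,M)\oplus \C^{0}_{\RBO}(\frakg,M)$, so a $1$-cochain is a \emph{pair} $(\gamma_1,\gamma_0)$ with $\gamma_1\colon\frakg\to M$ and $\gamma_0\in\Hom(\bfk,M)\cong M$, and the general coboundary is
\[
d^1(\gamma_1,\gamma_0)=\big(\delta^1(\gamma_1),\,-\Phi^1(\gamma_1)-\partial^0(\gamma_0)\big).
\]
By writing $(\psi_1,\chi_1)-(\psi_2,\chi_2)=d^1(\gamma)$ with $\gamma\in\C^1_\PLA(\frakg,M)$ you have implicitly set $\gamma_0=0$, i.e.\ you only treat cocycles differing by coboundaries of the special form $d^1(\gamma,0)$, whereas the statement requires arbitrary coboundaries.

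The gap is easily closed, and the paper does exactly this: since $\partial^0=\Phi^1\circ\delta^0$ and $\delta^1\circ\delta^0=0$, replacing $\gamma_1$ by $\gamma:=\gamma_1+\delta^0(\gamma_0)$ gives $d^1(\gamma_1,\gamma_0)=\big(\delta^1(\gamma),-\Phi^1(\gamma)\big)$, so your two identities $\psi_1-\psi_2=\delta^1(\gamma)$ and $\chi_1-\chi_2=-\Phi^1(\gamma)$ do hold for this modified $\gamma$, and the rest of your argument then goes through verbatim. One further remark: your map $\zeta(x+m)=x+\gamma(x)+m$ is the inverse of the paper's map $\zeta(x+m)=x-\gamma(x)+m$; yours is a morphism from the $(\psi_1,\chi_1)$-extension to the $(\psi_2,\chi_2)$-extension while the paper's goes the other way, and since both are isomorphisms this difference is immaterial.
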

 \begin{proof}

 	  Given two 2-cocycles $(\psi_1,\chi_1)$ and $(\psi_2,\chi_2)$, we can construct two abelian extensions $(\frakg\oplus M,[\cdot, \cdot, \cdot]_{\psi_1},T_{\chi_1})$ and  $(\frakg\oplus M,[\cdot, \cdot, \cdot]_{\psi_2},T_{\chi_2})$ via Equations~\eqref{eq:mul} and \eqref{eq:dif}. If they represent the same cohomology  class {in $\rmH_{\RBA}^2(\frakg,M)$}, then there exists two linear maps $\gamma_0:k\rightarrow M, \gamma_1:\frakg\to M$ such that $$(\psi_1,\chi_1)=(\psi_2,\chi_2)+(\delta^1(\gamma_1),-\Phi^1(\gamma_1)-\partial^0(\gamma_0)).$$
 	Notice that $\partial^0=\Phi^1\circ\delta^0$. Define $\gamma: \frakg\rightarrow M$ to be $\gamma_1+\delta^0(\gamma_0)$. Then $\gamma$ satisfies
 	\[(\psi_1,\chi_1)=(\psi_2,\chi_2)+(\delta^1(\gamma),-\Phi^1(\gamma)).\]
 	
 	Define $\zeta:\frakg\oplus M\rar \frakg\oplus M$ by
 	\[\zeta(x+m):=x-\gamma(x)+m.\]
 	Then $\zeta$ is an isomorphism of these two abelian extensions $(\frakg\oplus M,[\cdot, \cdot, \cdot]_{\psi_1},T_{\chi_1})$ and  $(\frakg\oplus M,[\cdot, \cdot, \cdot]_{\psi_2},T_{\chi_2})$.
 \end{proof}

\bigskip

\section{Skeletal  Rota-Baxter $3$-Lie $2$-algebras and  crossed modules}

In this section, we introduce the notion of  Rota-Baxter $3$-Lie  2-algebras of  arbitrary weights  and
show that skeletal  Rota-Baxter $3$-Lie algebras of  arbitrary weights are classified by 3-cocycles of  Rota-Baxter $3$-Lie
algebras of  arbitrary weights.

\subsection{Skeletal  Rota-Baxter $3$-Lie  $2$-algebras} \

In the following, we first recall the definition of  $3$-Lie  2-algebras from \cite{ZLS17}.
\begin{defn}
A \textbf{$3$-Lie  $2$-algebra} $\mathcal{G}=\mathfrak{g}_0\oplus \mathfrak{g}_1$ over $\mathbf{k}$ consists of the following data:
\begin{itemize}
\item[(a)]  a  linear map: $d: \mathfrak{g}_1\rightarrow \mathfrak{g}_0$,
\item[(b)]  completely skew-symmetric trilinear maps $l_3 : \mathfrak{g}_i \otimes \mathfrak{g}_j\otimes \mathfrak{g}_k \rightarrow \mathfrak{g}_{i+j+k}$, where $0 \leq i + j+k \leq 1$,

\item[(c)] a multilinear map $l_5 : (\mathfrak{g}_0\wedge \mathfrak{g}_0) \otimes (\mathfrak{g}_0\wedge \mathfrak{g}_0\wedge \mathfrak{g}_0 )\rightarrow \mathfrak{g}_1$,
    \end{itemize}
such that for all $x, y, x_i \in \mathfrak{g}_0, i=1,..., 7$ and $\alpha, \beta\in \mathfrak{g}_1$, the following equalities are satisfied:
\begin{eqnarray*}
&&(1)\quad dl_3(x, y, \alpha)=l_3(x, y, d(\alpha)),\\
&&(2)\quad l_3(\alpha, d(\beta), x)=l_3(d(\alpha), \beta,  x),\\
&&(3)\quad dl_5(x_1, x_2, x_3, x_4, x_5)=-l_3(x_1, x_2, l_3(x_3, x_4, x_5))+l_3(x_3, l_3(x_1, x_2, x_4), x_5)+\\
&&\quad \quad l_3(l_3(x_1, x_2, x_3), x_4, x_5)+l_3(x_3, x_4, l_3(x_1,x_2, x_5)),\\
&&(4)  \quad l_5(d(\alpha), x_2, x_3, x_4, x_5)=-l_3(\alpha, x_2, l_3(x_3, x_4, x_5))+l_3(x_3, l_3(\alpha, x_2, x_4), x_5)+\\
&&\quad \quad \quad l_3(l_3(\alpha, x_2, x_3), x_4, x_5)+l_3(x_3, x_4, l_3(\alpha,x_2, x_5)),\\
&& (5)  \quad  l_5(x_1, x_2, d(\alpha), x_4, x_5)=-l_3(x_1, x_2, l_3(\alpha, x_4, x_5))+l_3(\alpha, l_3(x_1, x_2, x_4), x_5)+\\
&&\quad \quad \quad l_3(l_3(x_1, x_2, \alpha), x_4, x_5)+l_3(\alpha, x_4, l_3(x_1,x_2, x_5)),\\
&&(6)\quad l_3(l_5(x_1, x_2, x_3, x_4, x_5), x_6, x_7)+l_3(x_5, l_5(x_1, x_2, x_3, x_4, x_6), x_7)\\
&&\quad\quad \quad +l_3(x_1, x_2, l_5(x_3, x_4, x_5, x_6, x_7))+l_3(x_5, x_6, l_5(x_1, x_2, x_3, x_4, x_7))\\
&&\quad \quad\quad +l_5(x_1, x_2, l_3(x_3, x_4, x_5), x_6, x_7)+l_5(x_1, x_2. x_5, l_3(x_3, x_4, x_6), x_7)\\
&&\quad \quad\quad +l_5(x_1, x_2, x_5, x_6, l_3(x_3, x_4, x_7))=l_3(x_3, x_4, l_5(x_1, x_2, x_5, x_6, x_7))\\
&&\quad\quad \quad +l_5(l_3(x_1, x_2, x_3), x_4, x_5, x_6, x_7)+l_5(x_3, l_3(x_1, x_2, x_4), x_5, x_6, x_7)\\
&&\quad \quad\quad + l_5(x_3, x_4, l_3(x_1, x_2, x_5), x_6, x_7)+l_5(x_3, x_4, x_5, l_3(x_1, x_2, x_6),  x_7)\\
&&\quad\quad \quad +l_5(x_1, x_2, x_3, x_4, l_3(x_5, x_6, x_7))+l_5(x_3, x_4, x_5, x_6, l_3(x_1, x_2, x_7)).
\end{eqnarray*}
\end{defn}
 A   $3$-Lie  2-algebra  is said to
be {\bf skeletal} if $d = 0$.    A   $3$-Lie  2-algebra is said to
be {\bf strict} if $l_5 = 0$.

Inspired by \cite{JS21}, we give the notion of a  Rota-Baxter $3$-Lie  2-algebra of  arbitrary weights.
\begin{defn}\label{Def: RBpreLie 2-algebra}
A  \textbf{Rota-Baxter $3$-Lie  2-algebra} of weight $\lambda$ consists of a $3$-Lie 2-algebra $\mathcal{G} = (\mathfrak{g}_0, \mathfrak{g}_1, d, l_3, l_5)$
and a  Rota-Baxter operator $\Theta = (T_0, T_1, T_2)$ on $\mathcal{G}$, where $T_0:\mathfrak{g}_0\rightarrow \mathfrak{g}_0, T_1: \mathfrak{g}_1\rightarrow \mathfrak{g}_1$ and a completely skew-symmetric trilinear map $T_2: \mathfrak{g}_0\otimes \mathfrak{g}_0\otimes\mathfrak{g}_0\rightarrow \mathfrak{g}_1$ satisfying  the following equalities:
\begin{small}
\begin{eqnarray*}
	(a)&& T_0\circ d=d\circ T_1\\
(b)&& T_0(l_3(T_0(x), T_0(y), z)+l_3(x, T_0(y), T_0(z))+l_3(T_0(x), y, T_0(z))\\
\quad &&+\lambda l_3(T_0(x), y, z)+\lambda l_3(x, T_0(y), z)+\lambda l_3(x, y, T_0(z)) +\lambda^2 l_3(x, y, z))
-l_3(T_0(x), T_0(y), T_0(z))\\
&=&dT_2(x, y, z);\\
(c)&&T_1(l_3(T_0(x), T_0(y), \alpha)+l_3(x, T_0(y), T_1(\alpha))+l_3(T_0(x), y, T_1(\alpha))+\lambda l_3(T_0(x), y, \alpha)+\lambda l_3(x, T_0(y), \alpha)\\
\quad &&+\lambda l_3(x, y, T_1(\alpha)) +\lambda^2 l_3(x, y, \alpha))-l_3(T_1(\alpha), T_0(x))\\
&=&T_2( x, y, d(\alpha));\\
(d)&& l_5(T_0(x_1),T_0(x_2), T_0(x_3), T_0(x_4), T_0(x_5))+l_3(T_2(x_1, x_2, x_3), T_0(x_4), T_0(x_5))\\
\quad &&  +l_3(T_0(x_3), T_2(x_1, x_2, x_4),T_0(x_5))+l_3(T_0(x_3), T_0(x_4), T_2(x_1, x_2, x_5))\\
\quad && +T_2(l_3(T_0(x_1), T_0(x_2), x_3)+l_3(x_1, T_0(x_2), T_0(x_3))+l_3(T_0(x_1), x_2, T_0(x_3)))\\
\quad &&+\lambda l_3(T_0(x_1), x_2, x_3+\lambda l_3(x_1, T_0(x_2), x_3)+\lambda l_3(x_1, x_2, T_0(x_3)) +\lambda^2 l_3(x_1, x_2, x_3), x_4, x_5)\\
\quad &&+ T_2(x_3,l_3(T_0(x_1), T_0(x_2), x_4)+l_3(x_1, T_0(x_2), T_0(x_4))+l_3(T_0(x_1), x_2, T_0(x_4))\\
\quad &&+\lambda l_3(T_0(x_1), x_2, x_4)+\lambda l_3(x_1, T_0(x_2), x_4)+\lambda l_3(x_1, x_2, T_0(x_4)) +\lambda^2 l_3(x_1, x_2, x_4),x_5 )\\
\quad &&+T_2(x_3, x_4, l_3(T_0(x_1), T_0(x_2), x_5)+l_3(x_1, T_0(x_2), T_0(x_5))+l_3(T_0(x_1), x_2, T_0(x_5))\\
\quad &&+\lambda l_3(T_0(x_1), x_2, x_5)+\lambda l_3(x_1, T_0(x_2), x_5)+\lambda l_3(x_1, x_2, T_0(x_5)) +\lambda^2 l_3(x_1, x_2, x_5))\\
\quad &=&l_3(T_0(x_1), T_0(x_2), T_2(x_3, x_4, x_5))+T_2(x_1, x_2, l_3(T_0(x_3), T_0(x_4), x_5)+l_3(x_3, T_0(x_4), T_0(x_5))\\
\quad &&+l_3(T_0(x_3), x_4, T_0(x_5))+\lambda l_3(T_0(x_3), x_4, x_5)+\lambda l_3(x_3, T_0(x_4), x_5)\\
\quad &&+\lambda l_3(x_3, x_4, T_0(x_5)) +\lambda^2 l_3(x_3, x_4, x_5))+ T_1(l_5(x_1, x_2, x_3, x_4, x_5)).
\end{eqnarray*}
\end{small}
\end{defn}
We will denote a  Rota-Baxter $3$-Lie  2-algebra of  weight $\lambda$ by $(\mathcal{G}, \Theta)$. A  Rota-Baxter $3$-Lie  2-algebra of  weight $\lambda$ is said to
be {\bf skeletal} if $d = 0$.    A  Rota-Baxter $3$-Lie  2-algebra of weight $\lambda$ is said to
be {\bf strict} if $l_5 = 0, T_2=0$.

Let $(\mathcal{G}, \Theta)$ be a skeletal  Rota-Baxter $3$-Lie  2-algebra of  weight $\lambda$. Then $\mathcal{G}$ is a  skeletal 3-Lie 2-algebra. Therefore, we have
\begin{eqnarray}
&&\quad 0=-l_3(x_1, x_2, l_3(x_3, x_4, x_5))+l_3(x_3, l_3(x_1, x_2, x_4), x_5)+l_3(l_3(x_1, x_2, x_3), x_4, x_5)\\
&&\quad \quad +l_3(x_3, x_4, l_3(x_1,x_2, x_5)),\nonumber\\
&&\quad 0=-l_3(\alpha, x_2, l_3(x_3, x_4, x_5))+l_3(x_3, l_3(\alpha, x_2, x_4), x_5)+l_3(l_3(\alpha, x_2, x_3), x_4, x_5)\\
&&\quad \quad +l_3(x_3, x_4, l_3(\alpha,x_2, x_5)),\nonumber\\
&&\quad 0=-l_3(x_1, x_2, l_3(\alpha, x_4, x_5))+l_3(\alpha, l_3(x_1, x_2, x_4), x_5)+l_3(l_3(x_1, x_2, \alpha), x_4, x_5)\\
&& \quad \quad +l_3(\alpha, x_4, l_3(x_1,x_2, x_5)),\nonumber\\
&&\quad  l_3(l_5(x_1, x_2, x_3, x_4, x_5), x_6, x_7)+l_3(x_5, l_5(x_1, x_2, x_3, x_4, x_6), x_7) \label{skeletal 3-Lie 2-algebra 4}\nonumber\\
&&\quad +l_3(x_1, x_2, l_5(x_3, x_4, x_5, x_6, x_7))+l_3(x_5, x_6, l_5(x_1, x_2, x_3, x_4, x_7)) \nonumber\\
&&\quad +l_5(x_1, x_2, l_3(x_3, x_4, x_5), x_6, x_7)+l_5(x_1, x_2. x_5, l_3(x_3, x_4, x_6), x_7)\nonumber\\
&&\quad  +l_5(x_1, x_2, x_5, x_6, l_3(x_3, x_4, x_7))\nonumber\\
&&=l_3(x_3, x_4, l_5(x_1, x_2, x_5, x_6, x_7))
 +l_5(l_3(x_1, x_2, x_3), x_4, x_5, x_6, x_7) \\
 &&\quad +l_5(x_3, l_3(x_1, x_2, x_4), x_5, x_6, x_7)  + l_5(x_3, x_4, l_3(x_1, x_2, x_5), x_6, x_7)\nonumber\\
&& \quad +l_5(x_3, x_4, x_5, l_3(x_1, x_2, x_6),  x_7) +l_5(x_1, x_2, x_3, x_4, l_3(x_5, x_6, x_7))\nonumber\\
&&\quad  +l_5(x_3, x_4, x_5, x_6, l_3(x_1, x_2, x_7)).\nonumber
\end{eqnarray}
for all $x_i \in \mathfrak{g}_0, i=1,..., 7$ and $\alpha\in \mathfrak{g}_1$.

\begin{thm}
There is a one-to-one correspondence between skeletal  Rota-Baxter $3$-Lie  2-algebras of  arbitrary weights and 3-cocycles of   Rota-Baxter $3$-Lie  algebra of  arbitrary weights.

\end{thm}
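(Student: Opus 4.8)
The plan is to set up a dictionary that reads the defining axioms of a skeletal Rota-Baxter $3$-Lie $2$-algebra, one at a time, as the data of a Rota-Baxter $3$-Lie algebra together with a representation and a degree-$3$ cocycle in the complex $\C^\bullet_\RBA$. Since skeletal means $d=0$, I would first extract two pieces of structure from the trilinear map $l_3$: its restriction to $\wedge^3\mathfrak{g}_0$ is a skew-symmetric bracket $[\cdot,\cdot,\cdot]$ on $\mathfrak{g}_0$, while the assignment $\rho(x,y)\alpha:=l_3(x,y,\alpha)$ for $x,y\in\mathfrak{g}_0,\ \alpha\in\mathfrak{g}_1$ is a candidate representation. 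Then I would check that axiom (3) with $d=0$ is verbatim the Fundamental Identity for $[\cdot,\cdot,\cdot]$ on $\mathfrak{g}_0$, and that axioms (4) and (5) with $d=0$ are exactly the two identities in the definition of a representation; hence $(\mathfrak{g}_0,[\cdot,\cdot,\cdot])$ is a $3$-Lie algebra and $(\mathfrak{g}_1,\rho)$ a representation of it.

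Next I would treat the Rota-Baxter data. Axiom (b) with $d=0$ reads $l_3(T_0(x),T_0(y),T_0(z))=T_0(\cdots)$, which is precisely the weight-$\lambda$ Rota-Baxter relation of Equation~\eqref{Eq: Rota-Baxter relation}, so $(\mathfrak{g}_0,[\cdot,\cdot,\cdot],T_0)$ is a Rota-Baxter $3$-Lie algebra. Axiom (c) with $d=0$ (correcting the evident typo $l_3(T_1(\alpha),T_0(x))$ to $l_3(T_0(x),T_0(y),T_1(\alpha))$) becomes, after substituting $\rho(x,y)=l_3(x,y,-)$, exactly the defining identity of a Rota-Baxter representation (Definition~\ref{Def: Rota-Baxter representations}); thus $(\mathfrak{g}_1,\rho,T_1)$ is a representation of $(\mathfrak{g}_0,[\cdot,\cdot,\cdot],T_0)$. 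This pins down the coefficient system for the $\RBA$-cohomology.

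With the ambient structure fixed, I would identify $l_5$ as a cochain in $\C^3_\PLA(\mathfrak{g}_0,\mathfrak{g}_1)$ (it takes five arguments from $\mathfrak{g}_0$ and lands in $\mathfrak{g}_1$) and $T_2$ as a cochain in $\C^2_\RBO(\mathfrak{g}_0,\mathfrak{g}_1)$ (a skew-symmetric map $\wedge^3\mathfrak{g}_0\to\mathfrak{g}_1$). The skeletal identity displayed after the axioms, a seven-argument equation, is term-by-term the statement $\delta^3(l_5)=0$ in $\C^\bullet_\PLA(\mathfrak{g}_0,\mathfrak{g}_1)$. The crux is axiom (d): I would expand $\Phi^3(l_5)$ via the explicit formula for $\Phi^n$ — for $n=3$ this yields $l_5\circ(T_0,T_0,T_0,T_0,T_0)$ together with the double sum $\sum_{k=0}^{4}\lambda^{4-k}\sum_{i_1<\cdots<i_{k-1}}T_1\circ l_5\circ(\text{insertions of }T_0)$ — and expand $\partial^2(T_2)$ using the $\RBO$-differential, then match both against the monomials of (d). This identifies (d) with $\partial^2(T_2)+\Phi^3(l_5)=0$, which together with $\delta^3(l_5)=0$ is exactly $d^3(l_5,T_2)=0$; that is, $(l_5,T_2)$ is a $3$-cocycle in $\C^\bullet_\RBA(\mathfrak{g}_0,\mathfrak{g}_1)$.

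For the converse I would run the same dictionary backwards: starting from a Rota-Baxter $3$-Lie algebra $(\mathfrak{g}_0,[\cdot,\cdot,\cdot],T_0)$, a representation $(\mathfrak{g}_1,\rho,T_1)$, and a $3$-cocycle $(l_5,T_2)$, set $d=0$, let $l_3$ be the bracket on $\mathfrak{g}_0$ and $\rho$ on the mixed slots, and put $\Theta=(T_0,T_1,T_2)$; axioms (1)--(6) and (a)--(d) then hold because they are the reversed implications just established. Since the two assignments are mutually inverse on the underlying data, this yields the claimed bijection. I expect the only genuinely laborious step to be the term-by-term matching of axiom (d) with $\partial^2(T_2)+\Phi^3(l_5)=0$, owing to the combinatorial proliferation of $T_0$-insertions in $\Phi^3$; every other step is a direct comparison of the weight-$\lambda$ Rota-Baxter identities with the differentials recalled in Sections~2 and~3.
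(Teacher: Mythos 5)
Your proposal is correct and takes essentially the same route as the paper: view $(\mathfrak{g}_0,l_3,T_0)$ as the Rota-Baxter $3$-Lie algebra, read $(l_5,T_2)$ as a pair in $\C^3_{\PLA}(\mathfrak{g}_0,\mathfrak{g}_1)\oplus\C^2_{\RBO}(\mathfrak{g}_0,\mathfrak{g}_1)=\C^3_{\RBA}(\mathfrak{g}_0,\mathfrak{g}_1)$, and identify the skeletal seven-argument identity with $\delta^3(l_5)=0$ and axiom (d) with $\partial^2(T_2)+\Phi^3(l_5)=0$, i.e.\ $d^3(l_5,T_2)=0$. Your write-up is in fact more careful than the paper's, which leaves implicit the verification that $\rho(x,y)=l_3(x,y,-)$ makes $(\mathfrak{g}_1,\rho,T_1)$ a Rota-Baxter representation (needed to even form the coefficient complex) and which contains the typo in axiom (c) that you flag.
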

\begin{proof}
Let $(\mathcal{G}, \Theta)$ be a skeletal   Rota-Baxter $3$-Lie  2-algebra of  weight $\lambda$. Then $(\mathfrak{g}_0, l_3, T_0)$ is a  Rota-Baxter $3$-Lie  algebra of  weight $\lambda$. Define linear maps
$f: (\mathfrak{g}_0\wedge \mathfrak{g}_0) \otimes (\mathfrak{g}_0\wedge \mathfrak{g}_0\wedge \mathfrak{g}_0 )\rightarrow \mathfrak{g}_1$ and $\theta: \mathfrak{g}_0 \otimes \mathfrak{g}_0\otimes \mathfrak{g}_0\rightarrow \mathfrak{g}_1$ by
\begin{eqnarray*}
& f(x_1, x_2, x_3, x_4, x_5)=l_5(x_1, x_2, x_3, x_4, x_5),\\
& \theta(x, y, z)=T_2(x, y, z).
\end{eqnarray*}
By (c) in Definition \ref{Def: RBpreLie 2-algebra}, we have
\begin{eqnarray}
&& f(T_0(x_1),T_0(x_2), T_0(x_3), T_0(x_4), T_0(x_5))+l_3(T_2(x_1, x_2, x_3), T_0(x_4), T_0(x_5))\label{skeletal 3-Lie 2-algebra 5}\nonumber\\
\quad &&  +l_3(T_0(x_3), \theta(x_1, x_2, x_4),T_0(x_5))+l_3(T_0(x_3), T_0(x_4), \theta(x_1, x_2, x_5))\nonumber\\
\quad && +\theta(l_3(T_0(x_1), T_0(x_2), x_3)+l_3(x_1, T_0(x_2), T_0(x_3))+l_3(T_0(x_1), x_2, T_0(x_3))+\lambda l_3(T_0(x_1), x_2, x_3)\nonumber\\
\quad &&+\lambda l_3(x_1, T_0(x_2), x_3)+\lambda l_3(x_1, x_2, T_0(x_3)) +\lambda^2 l_3(x_1, x_2, x_3), x_4, x_5)\nonumber\\
\quad &&+ \theta(x_3,l_3(T_0(x_1), T_0(x_2), x_4)+l_3(x_1, T_0(x_2), T_0(x_4))+l_3(T_0(x_1), x_2, T_0(x_4)) \\
\quad &&+\lambda l_3(T_0(x_1), x_2, x_4)+\lambda l_3(x_1, T_0(x_2), x_4)+\lambda l_3(x_1, x_2, T_0(x_4)) +\lambda^2 l_3(x_1, x_2, x_4),x_5 )\nonumber\\
\quad &&+T_2(x_3, x_4, l_3(T_0(x_1), T_0(x_2), x_5)+l_3(x_1, T_0(x_2), T_0(x_5))+l_3(T_0(x_1), x_2, T_0(x_5))\nonumber\\
\quad &&+\lambda l_3(T_0(x_1), x_2, x_5)+\lambda l_3(x_1, T_0(x_2), x_5)+\lambda l_3(x_1, x_2, T_0(x_5)) +\lambda^2 l_3(x_1, x_2, x_5))\nonumber\\
\quad &&-l_3(T_0(x_1), T_0(x_2), T_2(x_3, x_4, x_5))-\theta(x_1, x_2, l_3(T_0(x_3), T_0(x_4), x_5)+l_3(x_3, T_0(x_4), T_0(x_5))\nonumber\\
\quad &&+l_3(T_0(x_3), x_4, T_0(x_5))+\lambda l_3(T_0(x_3), x_4, x_5)+\lambda l_3(x_3, T_0(x_4), x_5)\nonumber\\
\quad &&+\lambda l_3(x_3, x_4, T_0(x_5)) +\lambda^2 l_3(x_3, x_4, x_5))- T_1(f(x_1, x_2, x_3, x_4, x_5))=0.\nonumber
\end{eqnarray}
 By (\ref{skeletal 3-Lie 2-algebra 4}) and (\ref{skeletal 3-Lie 2-algebra 5}),  we have $d^n(f, \theta)=0$. Thus,  $(f, \theta)$ is a 3-cocycle of  Rota-Baxter $3$-Lie  algebras of  weight $\lambda$.

The proof of the other direction is similar, and  we are done.
\end{proof}

\subsection{Crossed modules} \

Now we turn to the study on strict  Rota-Baxter $3$-Lie  2-algebras. First we introduce the notion of crossed
modules of  Rota-Baxter $3$-Lie  algebras.

\begin{defn}
A \textbf{crossed module} of  Rota-Baxter $3$-Lie  algebras of weight $\lambda$ is a sextuple $$((\mathfrak{g}_0, [\cdot, \cdot, \cdot]_{\mathfrak{g}_0}),
 (\mathfrak{g}_1, [\cdot, \cdot, \cdot]_{\mathfrak{g}_1}), d, S, T_0, T_1),$$
where $(\mathfrak{g}_0, [\cdot, \cdot, \cdot]_{\mathfrak{g}_0}, T_0)$ is a   Rota-Baxter $3$-Lie  algebra of weight $\lambda$ and $(\mathfrak{g}_1, [\cdot, \cdot, \cdot]_{\mathfrak{g}_1})$ is a  3-Lie algebra, $d: \mathfrak{g}_1\rightarrow \mathfrak{g}_0$ is a homomorphism of 3-Lie algebras,   and $(\mathfrak{g}_1, S,  T_1)$ is a Rota-Baxter representation  over $(\mathfrak{g}_0, [\cdot, \cdot, \cdot]_{\mathfrak{g}_0}, T_0)$ such that for all $x, y \in \mathfrak{g}_0$ and $\alpha, \beta, \gamma \in \mathfrak{g}_1$, the following
equalities are satisfied:
\begin{eqnarray*}
  &&d(S(x, y))(\alpha) = [x, y,  d(\alpha)]_{\mathfrak{g}_0}, T_0\circ d=d\circ T_1,\\
  && S(d(\alpha), d(\beta))(\gamma)= [\alpha, \beta, \gamma]_{\mathfrak{g}_1}, S(x, d(\alpha))(\beta)=-S(x, d(\beta))(\alpha).
\end{eqnarray*}

\end{defn}
Let $(\mathcal{G}, \Theta)$ be a strict  Rota-Baxter $3$-Lie  2-algebra of  weight $\lambda$. Then $\mathcal{G}$ is a  strict 3-Lie 2-algebra. Therefore, we have
\begin{eqnarray}
&&~dl_3(x, y, \alpha)=l_3(x, y, d(\alpha)),\\
&&~l_3(\alpha, d(\beta), x)=l_3(d(\alpha), \beta,  x),\label{strict RBA 2} \\
&&0=-l_3(x_1, x_2, l_3(x_3, x_4, x_5))+l_3(x_3, l_3(x_1, x_2, x_4), x_5)+l_3(l_3(x_1, x_2, x_3), x_4, x_5)\\
&&\quad\quad +l_3(x_3, x_4, l_3(x_1,x_2, x_5)),\label{strict RBA 3}\nonumber\\
&& 0=-l_3(\alpha, x_2, l_3(x_3, x_4, x_5))+l_3(x_3, l_3(\alpha, x_2, x_4), x_5)+l_3(l_3(\alpha, x_2, x_3), x_4, x_5)\\
&&\quad \quad +l_3(x_3, x_4, l_3(\alpha,x_2, x_5)),\nonumber\\
&&0=-l_3(x_1, x_2, l_3(\alpha, x_4, x_5))+l_3(\alpha, l_3(x_1, x_2, x_4), x_5)+l_3(l_3(x_1, x_2, \alpha), x_4, x_5)\label{strict RBA 5}\\
&& \quad\quad +l_3(\alpha, x_4, l_3(x_1,x_2, x_5)),\nonumber
\end{eqnarray}
for all $x_i \in \mathfrak{g}_0, i=1,..., 7$ and $\alpha, \beta\in \mathfrak{g}_1$.
\begin{thm}
There is a one-to-one correspondence between strict  Rota-Baxter $3$-Lie  2-algebras of  weight $\lambda$ and crossed
modules of  Rota-Baxter $3$-Lie  algebras of weight $\lambda$.
\end{thm}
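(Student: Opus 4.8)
The plan is to build an explicit dictionary between the two kinds of structures and to check that the defining axioms on each side translate into those on the other, so that the assignments are mutually inverse. Starting from a strict Rota-Baxter $3$-Lie $2$-algebra $(\mathcal{G},\Theta)$ with $\mathcal{G}=\mathfrak{g}_0\oplus\mathfrak{g}_1$, $l_5=0$ and $T_2=0$, I would keep $d$, $T_0$, $T_1$ unchanged, put $[\cdot,\cdot,\cdot]_{\mathfrak{g}_0}:=l_3|_{\wedge^3\mathfrak{g}_0}$, define the action $S(x,y)(\alpha):=l_3(x,y,\alpha)$ for $x,y\in\mathfrak{g}_0$ and $\alpha\in\mathfrak{g}_1$, and define the bracket on $\mathfrak{g}_1$ by $[\alpha,\beta,\gamma]_{\mathfrak{g}_1}:=S(d\alpha,d\beta)(\gamma)=l_3(d\alpha,d\beta,\gamma)$. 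In the reverse direction, starting from a crossed module I would set $l_5=0$ and $T_2=0$ and reconstruct the graded components of $l_3$ by reading these same formulas backwards, extending by complete skew-symmetry. The proof is then a term-by-term matching of axioms.

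First I would record the routine matches. The fundamental identity for $\mathfrak{g}_0$ is exactly the assertion that $(\mathfrak{g}_0,[\cdot,\cdot,\cdot]_{\mathfrak{g}_0})$ is a $3$-Lie algebra; condition (b) of Definition~\ref{Def: RBpreLie 2-algebra} with $T_2=0$ says precisely that $T_0$ is a Rota-Baxter operator of weight $\lambda$ on $\mathfrak{g}_0$; condition (a) is $T_0\circ d=d\circ T_1$; the identity $dl_3(x,y,\alpha)=l_3(x,y,d\alpha)$ becomes the crossed-module relation $d(S(x,y))(\alpha)=[x,y,d\alpha]_{\mathfrak{g}_0}$; and the relation $l_3(\alpha,d\beta,x)=l_3(d\alpha,\beta,x)$ becomes $S(x,d\alpha)(\beta)=-S(x,d\beta)(\alpha)$ after reorganising the three entries of $l_3$ by complete skew-symmetry. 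The two remaining structural equations (the analogues of (4) and (5) with $l_5=0$) are exactly the two axioms making $S$ a representation of $\mathfrak{g}_0$ on $\mathfrak{g}_1$, and condition (c) with $T_2=0$ is, once the term $l_3(T_0 x,T_0 y,T_1\alpha)=S(T_0 x,T_0 y)(T_1\alpha)$ is moved to one side, literally the defining equation of a Rota-Baxter representation $(\mathfrak{g}_1,S,T_1)$. Finally, condition (d) is vacuous ($0=0$) as soon as $l_5=0$ and $T_2=0$, so it imposes nothing and needs no counterpart on the crossed-module side.

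The only step requiring genuine work is to verify that the bracket $[\alpha,\beta,\gamma]_{\mathfrak{g}_1}:=S(d\alpha,d\beta)(\gamma)$ really is a $3$-Lie bracket and that $d$ is a homomorphism, since these are not among the strict structural equations and must be deduced. For the homomorphism property I would combine $d(S(x,y))(\alpha)=[x,y,d\alpha]_{\mathfrak{g}_0}$ with the definition of the bracket to obtain $d[\alpha,\beta,\gamma]_{\mathfrak{g}_1}=[d\alpha,d\beta,d\gamma]_{\mathfrak{g}_0}$; skew-symmetry in the last two slots is exactly $S(x,d\alpha)(\beta)=-S(x,d\beta)(\alpha)$, while skew-symmetry in the first two slots follows from $S$ factoring through $\wedge^2\mathfrak{g}_0$. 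The fundamental identity for $[\cdot,\cdot,\cdot]_{\mathfrak{g}_1}$ is the delicate computation: writing each nested bracket through $S$ and $d$ and using the homomorphism property to replace each $d[\alpha,\beta,\gamma]_{\mathfrak{g}_1}$ by $[d\alpha,d\beta,d\gamma]_{\mathfrak{g}_0}$, the identity collapses precisely to the first representation axiom for $S$ evaluated at $(d\alpha,d\beta,d\gamma,d\delta)$ and applied to $\gamma$'s fifth argument. I expect this translation — keeping track of the signs produced by complete skew-symmetry and confirming that the representation axiom exactly closes up the fundamental identity — to be the one real obstacle. The converse direction is then obtained by reading each of these equivalences backwards, so that the two constructions are mutually inverse and the correspondence is bijective.
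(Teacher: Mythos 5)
Your proposal is correct and takes essentially the same approach as the paper's proof: the identical dictionary $[\cdot,\cdot,\cdot]_{\mathfrak{g}_0}=l_3|_{\wedge^3\mathfrak{g}_0}$, $S(x,y)(\alpha)=l_3(x,y,\alpha)$, $[\alpha,\beta,\gamma]_{\mathfrak{g}_1}=l_3(d\alpha,d\beta,\gamma)$, with $l_5=0$, $T_2=0$ in the converse direction, together with the same axiom-by-axiom matching (conditions (a)--(c) giving the Rota-Baxter data, the shifted Fundamental Identities giving the representation axioms for $S$, and (d) becoming vacuous). Your treatment of the two points the paper leaves implicit --- that $[\cdot,\cdot,\cdot]_{\mathfrak{g}_1}$ satisfies the Fundamental Identity because it collapses, via $d[\alpha,\beta,\gamma]_{\mathfrak{g}_1}=[d\alpha,d\beta,d\gamma]_{\mathfrak{g}_0}$, to the first representation axiom evaluated at $(d\alpha_1,d\alpha_2,d\alpha_3,d\alpha_4)$, and that $d$ is a $3$-Lie homomorphism --- is in fact more explicit than the paper's own verification.
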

\begin{proof}
Let $(\mathcal{G}, \Theta)$ be a strict  Rota-Baxter $3$-Lie  2-algebra of  weight $\lambda$. We construct a crossed module of
strict  Rota-Baxter $3$-Lie  algebras of  weight $\lambda$ as follows. Obviously, $(\mathfrak{g}_0, [\cdot, \cdot, \cdot]_{\mathfrak{g}_0}, T_0)$ is a  Rota-Baxter $3$-Lie  algebra of weight $\lambda$. Define brackets  $[\cdot,\cdot,\cdot]_{\mathfrak{g}_0}, [\cdot, \cdot, \cdot]_{\mathfrak{g}_1}$ on $\mathfrak{g}_0, \mathfrak{g}_1$ respectively as:
\begin{align*}
[x, y, z]_{\mathfrak{g}_0}&=l_3(x, y, z), \\
[\alpha, \beta, \gamma]_{\mathfrak{g}_1}&=l_3(d(\alpha), d(\beta), \gamma  )=l_3(d(\alpha), \beta, d(\gamma)  )=l_3(\alpha, d(\beta), d(\gamma)  ).
\end{align*}
By conditions (\ref{strict RBA 2}) and  (\ref{strict RBA 3}). Then    $(\mathfrak{g}_1, [\cdot, \cdot, \cdot]_{\mathfrak{g}_1})$ is a 3-Lie algebra.  Obviously, we deduce that $d$ is a
homomorphism between 3-Lie algebras. Define $S: \mathfrak{g}_0\wedge \mathfrak{g}_0  \rightarrow gl(\mathfrak{g}_1)$ by
\begin{eqnarray}
S(x, y)(\alpha) = l_3(x, y, \alpha).\label{crossed module}
\end{eqnarray}
By conditions (b),  (c) and (\ref{strict RBA 5}), (\ref{crossed module}),  the triple $(\mathfrak{g}_1, S, T_1)$ is a  representation  over the Rota-Baxter $3$-Lie algebra $(\mathfrak{g}_0, [\cdot, \cdot, \cdot]_{\mathfrak{g}_0}, T_0)$.
Thus $((\mathfrak{g}_0, [\cdot, \cdot, \cdot]_{\mathfrak{g}_0}), (\mathfrak{g}_1, [\cdot, \cdot, \cdot]_{\mathfrak{g}_1}), d, S, T_0, T_1)$ is a crossed module of  Rota-Baxter $3$-Lie   algebras of  weight $\lambda$.

Conversely, a crossed module of  Rota-Baxter $3$-Lie  algebras $((\mathfrak{g}_0, [\cdot, \cdot, \cdot]_{\mathfrak{g}_0}), (\mathfrak{g}_1, [\cdot, \cdot, \cdot]_{\mathfrak{g}_1}), $d, S,$T_0, T_1)$ of  weight $\lambda$ gives rise to a strict  Rota-Baxter $3$-Lie  2-algebra $(\mathfrak{g}_0,\mathfrak{g}_1, d, l_3, l_5 = 0, T_0, T_1, T_2=0)$ of  weight $\lambda$, where $l_3 : \mathfrak{g}_i \otimes \mathfrak{g}_j \otimes\mathfrak{g}_k \rightarrow \mathfrak{g}_{i+j+k} , 0 \leq i + j+k \leq 1$ is given by
\begin{eqnarray*}
l_3(x, y, z) = [x, y, z]_{\mathfrak{g}_0}, l_3(x, y, \alpha)=S(x, y)(\alpha).
\end{eqnarray*}
Direct verification shows that $(\mathfrak{g}_0, \mathfrak{g}_1, d, l_3, l_5=0, T_0, T_1, T_2=0)$  is a strict  Rota-Baxter $3$-Lie  2-algebra of  weight $\lambda$.
\end{proof}
\bigskip

\section*{Appendix A: Proof of Proposition~\ref{Prop: Chain map Phi} }
	The purpose of this appendix is to prove Proposition \ref{Prop: Chain map Phi}.
	
	Firstly, let's introduce some notations we will use in the following proof. For an element  $$(\mathfrak{X}_1, \dots, \mathfrak{X}_{n }, x_{n+1})\in\underbrace{\wedge^2\mathfrak{g}\otimes \cdots \otimes \wedge^2\mathfrak{g}}_{n-1}\wedge \mathfrak{g}$$ with $\mathfrak{X}_i=x_i\wedge y_i$, we write it as an ordered sequence $(z_1,z_2,\dots,z_{2n},z_{2n+1})$, where $$z_1=x_1,\ z_2=y_1,\dots, z_{2k-1}=x_{k},\ z_{2k}=y_k,\dots, z_{2n-1}=x_n,\ z_{2n}=y_n,\ z_{2n+1}=x_{n+1}.$$
	For arbitrary permutation $\tau \in S_{2n+1}$, its action on the element $(z_1,\dots,z_{2n+1})$ is given as   $$\tau(z_{1},z_2,\dots,z_{2n},z_{2n+1})= (z_{\tau^{-1}(1)},z_{\tau^{-1}(2)},\dots,z_{\tau^{-1}(2n)},z_{\tau^{-1}(2n+1)}).$$
 Define an operation  $[ i-1,i,i+1 ]_\mathfrak{g}$ from $\underbrace{\wedge^2\mathfrak{g}\otimes \cdots \otimes \wedge^2\mathfrak{g}}_{n-1}\wedge \mathfrak{g}$ to $\underbrace{\wedge^2\mathfrak{g}\otimes \cdots \otimes \wedge^2\mathfrak{g}}_{n-2}\wedge \mathfrak{g}$ as $$[ i-1,i,i+1 ]_\mathfrak{g}\big(z_{1},z_2,\dots,z_{2n},z_{2n+1}\big)=\big(z_{1},z_2,\dots,z_{i-2},[ z_{i-1},z_{i },z_{i+1} ]_\mathfrak{g},z_{i+2},\dots ,z_{2n+1}\big).$$

 {\noindent{\bf{Proposition \ref{Prop: Chain map Phi}}}
 	The map $\Phi^\bullet: \C^\bullet_\PLA(\frakg,M)\rightarrow \C^\bullet_{\RBO}(\frakg,M)$ is a chain map.
}

	\begin{proof}
		Let $n\geqslant 1$. For arbitrary  $ f\in \C^n_{\PLA}(\frakg,M)$ and $\frak{X}_1\otimes \frak{X}_2\otimes \dots \otimes \frak{X}_{n}\wedge x_{n+1}\in (\frak{g}^{\wedge 2})^{\ot n}\ot \frak{g}^{\wedge 3}$ with $\frak{X}_i=x_i\wedge y_i, \forall 1\leqslant i\leqslant n$,
\begin{small}
		\begin{eqnarray*}
			&&\Big(\Phi^{n+1}\delta^n(f)\Big)\Big(\mathfrak{X}_1, ... ,\mathfrak{X}_{n}, x_{n+1}\Big)\\
			&=&\Big(\delta^n(f)\circ\left( T,\cdots,T,T\right)\Big) \Big(\mathfrak{X}_1, ..., \mathfrak{X}_{n}, x_{n+1}\Big) \\
			&&-\sum_{k=0}^{2n}\lambda^{2n-k}\sum_{1\leqslant i_1<i_2<\cdots< i_{k}\leqslant 2n+1}   \Big(T_M\circ \delta^n(f)\circ (\Id^{(i_1-1)} ,  T , \Id^{ (i_2-i_1-1)} , T, \cdots, T,\Id^{  (2n+1-i_{k})}  )\Big)\Big(\mathfrak{X}_1, ..., \mathfrak{X}_{n}, x_{n+1}\Big)\\
		    &= &\sum_{1\leq j< k\leq n}(-1)^{j}f\Big(T(x_1)\wedge T(y_1), \cdots, \mathfrak{\hat{X}}_j, \cdots, T(x_{k-1})\wedge T(y_{k-1}), \Big([T(x_j), T(y_j), T(x_k)]_\mathfrak{g}\wedge T(y_k)\\
		    &&+ T(x_k)\wedge [T(x_j), T(y_j), T(y_k)]_\mathfrak{g}\Big),  T(x_{k+1})\wedge T(y_{k+1}), \cdots,  T(x_{n})\wedge T(y_{n}), T(x_{n+1})\Big)\\
		    && +\sum^{n}_{j=1}(-1)^{j}f\Big( T(x_{1})\wedge T(y_{1}), \cdots, \mathfrak{\hat{X}}_j, \cdots,  T(x_{n})\wedge T(y_{n}), [T(x_j), T(y_j), T(x_{n+1})]_\mathfrak{g}\Big)\\
		    &&+ \sum^{n}_{j=1}(-1)^{j+1}\rho\Big(T(x_j), T(y_j)\Big)f\Big(T(x_1)\wedge T(y_1), \cdots, \mathfrak{\hat{X}}_j, \cdots, T(x_{n})\wedge T(y_{n}), T(x_{n+1})\Big)\\
		    &&+ (-1)^{n+1} \rho\Big(T(y_{n}), T(x_{n+1})\Big)f\Big(T(x_1)\wedge T(y_1),  \cdots, T(x_{n-1})\wedge T(y_{n-1}), T(x_{n}) \Big) \\
		    &&  +(-1)^{n+1} \rho\Big(T(x_{n+1}), T(x_{n})\Big)f\Big(T(x_1)\wedge T(y_1),  \cdots, T(x_{n-1})\wedge T(y_{n-1}), T(y_{n})\Big) \\
		   	&&-\sum_{k=0}^{2n}\lambda^{2n-k}\sum_{1\leqslant i_1<i_2<\cdots< i_{k}\leqslant 2n+1}\sum_{1\leq j< k\leq n}(-1)^{j} \Big( T_M\circ f\circ      [ 2k-3,2k-2,2k-1]_\mathfrak{g}\circ    \\
		    &&\ \     (2j-1,\cdots,2k-2)^{-2}\circ (\Id^{(i_1-1)} ,  T , \Id^{ (i_2-i_1-1)} , T, \cdots, T,\Id^{  (2n+1-i_{k})}  )\Big)\Big(\mathfrak{X}_1, \cdots,  \mathfrak{X}_{n}, x_{n+1}\Big)\\
		    &&-\sum_{k=0}^{2n}\lambda^{2n-k}\sum_{1\leqslant i_1<i_2<\cdots< i_{k}\leqslant 2n+1}\sum_{1\leq j< k\leq n}(-1)^{j} \Big( T_M\circ f\circ      [ 2k-2,2k-1,2k ]_\mathfrak{g}\circ   \\
		    && \ \     (2j-1,\cdots,2k-1)^{-2}\circ (\Id^{(i_1-1)} ,  T , \Id^{ (i_2-i_1-1)} , T, \cdots, T,\Id^{  (2n+1-i_{k})}  )\Big)\Big(\mathfrak{X}_1, \cdots,  \mathfrak{X}_{n}, x_{n+1}\Big)\\
			&&-\sum_{k=0}^{2n}\lambda^{2n-k}\sum_{1\leqslant i_1<i_2<\cdots< i_{k}\leqslant 2n+1} \sum^{n}_{j=1}(-1)^{j}\Big(T_M\circ f\circ   [ 2n-1,2n,2n+1 ]_\mathfrak{g}\circ  \\
			&&\ \  (2j-1,\cdots,2n)^{-2}\circ(\Id^{(i_1-1)} ,  T , \Id^{ (i_2-i_1-1)} , T, \cdots, T,\Id^{  (2n+1-i_{k})}  )\Big)\Big(\mathfrak{X}_1, \cdots,  \mathfrak{X}_{n}, x_{n+1}\Big) \\
			&&-\sum_{k=0}^{2n}\lambda^{2n-k}\sum_{1\leqslant i_1<i_2<\cdots< i_{k}\leqslant 2n+1} \sum^{n}_{j=1}(-1)^{j+1}\Big( T_M\left( \rho\cdot f \right) \circ (1,\cdots, 2j)^2\circ \\
				&&\ \  (\Id^{(i_1-1)} ,  T , \Id^{ (i_2-i_1-1)} , T, \cdots, T,\Id^{  (2n+1-i_{k})}  )\big)\Big(\mathfrak{X}_1, \cdots,  \mathfrak{X}_{n}, x_{n+1}\Big) \\
			&&-\sum_{k=0}^{2n}\lambda^{2n-k}\sum_{1\leqslant i_1<i_2<\cdots< i_{k}\leqslant 2n+1}(-1)^{n+1}\Big\{ T_M \circ\big(\rho\cdot f\big)\circ\Big( (1,\cdots,2n+1)+(1,\cdots,2n+1)\circ (1,\cdots,2n-1)\Big) \circ\\
				&&\ \  \big(\Id^{(i_1-1)} ,  T , \Id^{ (i_2-i_1-1)} , T, \cdots, T,\Id^{  (2n+1-i_{k})}\big) \Big\}\Big(\mathfrak{X}_1, \cdots,  \mathfrak{X}_{n}, x_{n+1}\Big)
		\end{eqnarray*}
\end{small}
	On the other hand, we have
\begin{small}
		\begin{align*}
		&\partial^n\Phi^{n}\big(f\big)(\mathfrak{X}_1, ... \mathfrak{X}_{n}, x_{n+1})\\
		=&\sum_{1\leq j< k\leq n}(-1)^{j}\Phi^{n}\big(f\big)\Big(\mathfrak{X}_1, \cdots, \mathfrak{\hat{X}}_j, \cdots, \mathfrak{X}_{k-1}, [x_j, y_j, x_k]_T\wedge y_k+ x_k\wedge [x_j, y_j, y_k]_T, \mathfrak{X}_{k+1}, \cdots, \mathfrak{X}_{n}, x_{n+1}\Big)\\
		& +\sum^{n}_{j=1}(-1)^{j}\Phi^{n}\big(f\big)\Big(\mathfrak{X}_1, \cdots, \mathfrak{\hat{X}}_j, \cdots, \mathfrak{X}_{n}, [x_j, y_j, x_{n+1}]_T\Big)\\
		&+ \sum^{n}_{j=1}(-1)^{j+1}\rho \Big(T(x_j), T(y_j)\Big)\Phi^{n}\big(f\big)\Big(\mathfrak{X}_1, \cdots, \mathfrak{\hat{X}}_j, \cdots, \mathfrak{X}_{n}, x_{n+1}\Big)\\
		&-\sum^{n}_{j=1}(-1)^{j+1} T_M\Big( \rho\big(T(x_j),  y_j \big)\Phi^{n}\big(f\big)\big(\mathfrak{X}_1, \cdots, \mathfrak{\hat{X}}_j, \cdots, \mathfrak{X}_{n}, x_{n+1}\big)\Big) \\
		&-\sum^{n}_{j=1}(-1)^{j+1} T_M\Big( \rho\big( x_j ,  T(y_j) \big) \Phi^{n}\big(f\big)\big(\mathfrak{X}_1, \cdots, \mathfrak{\hat{X}}_j, \cdots, \mathfrak{X}_{n}, x_{n+1}\big)\Big) \\
		&-\sum^{n}_{j=1}(-1)^{j+1}\lambda \ T_M\Big( \rho\big( x_j ,  y_j \big) \Phi^{n}\big(f\big)\big(\mathfrak{X}_1, \cdots, \mathfrak{\hat{X}}_j, \cdots, \mathfrak{X}_{n}, x_{n+1}\big)\Big) \\
		&+ (-1)^{n+1}\Big(  \rho\big(T(y_{n}), T(x_{n+1})\big)\Phi^{n}\big(f\big)\big(\mathfrak{X}_1,  \cdots, \mathfrak{X}_{n-1}, x_{n}\big)+\rho \big(T(x_{n+1}), T(x_{n})\big)\Phi^{n}\big(f\big)\big(\mathfrak{X}_1,  \cdots, \mathfrak{X}_{n-1}, y_{n}\big) \Big)  \\
		&- (-1)^{n+1} T_M\Big( \rho\big(T(y_{n}),  x_{n+1} \big)\Phi^{n}\big(f\big)\big(\mathfrak{X}_1,  \cdots, \mathfrak{X}_{n-1}, x_{n}\big)+\rho \big(T(x_{n+1}), x_{n}\big)\Phi^{n}\big(f\big)\big(\mathfrak{X}_1,  \cdots, \mathfrak{X}_{n-1}, y_{n}\big)\Big) \\
		&- (-1)^{n+1}T_M\Big(  \rho\big( y_{n} , T(x_{n+1})\big)\Phi^{n}\big(f\big)\big(\mathfrak{X}_1,  \cdots, \mathfrak{X}_{n-1}, x_{n}\big)+\rho \big(x_{n+1}, T(x_{n})\big)\Phi^{n}\big(f\big)\big(\mathfrak{X}_1,  \cdots, \mathfrak{X}_{n-1}, y_{n}\big)\Big) \\
		&- (-1)^{n+1}\lambda \ T_M\Big(  \rho\big( y_{n} ,  x_{n+1} \big)\Phi^{n}\big(f\big)\big(\mathfrak{X}_1,  \cdots, \mathfrak{X}_{n-1}, x_{n}\big)+\rho \big(x_{n+1}, x_{n})\Phi^{n}\big(f\big)\big(\mathfrak{X}_1,  \cdots, \mathfrak{X}_{n-1}, y_{n}\big)\Big) \\
		&\\
		=&\sum_{1\leq j< k\leq n}(-1)^{j}f\Big(T(x_1)\wedge T(y_1), \cdots, {\mathfrak{\hat{X}_j}}, \cdots, T(x_{k-1})\wedge T(y_{k-1}), [T(x_j), T(y_j), T(x_k)]_\mathfrak{g}\wedge T(y_k)\\
		&\ \ \ \ \ \ \ \ \ \ \ \  + T(x_k)\wedge [T(x_j), T(y_j), T(y_k)]_\mathfrak{g},  T(x_{k+1})\wedge T(y_{k+1}), \cdots,  T(x_{n})\wedge T(y_{n}), T(x_{n+1})\Big)\\
			&-\sum_{k=0}^{2n}\lambda^{2n-k}\sum_{0\leqslant i_1<i_2<\cdots< i_{k}\leqslant 2n+1}\sum_{1\leq j< k\leq n}(-1)^{j}  T_M\circ f\circ      [ 2k-3,2k-2,2k-1]_\mathfrak{g}\circ (2j-1,\cdots,2k-2)^{-2}\circ   \\
		&     \ \ \ \ \ \ \ \ \ \ \ \ \ \ \ (\Id^{(i_1-1)} ,  T , \Id^{ (i_2-i_1-1)} , T, \cdots, T,\Id^{  (2n+1-i_{k})}  )\circ(\mathfrak{X}_1, \cdots,  \mathfrak{X}_{n}, x_{n+1})\\
		&-\sum_{k=0}^{2n}\lambda^{2n-k}\sum_{1\leqslant i_1<i_2<\cdots< i_{k}\leqslant 2n+1}\sum_{1\leq j< k\leq n}(-1)^{j}  T_M\circ f\circ      [ 2k-2,2k-1,2k ]_\mathfrak{g}\circ (2j-1,\cdots,2k-1)^{-2}\circ  \\
		& \ \ \ \ \ \ \ \ \ \  \ \ \     (\Id^{(i_1-1)} ,  T ,
		\Id^{ (i_2-i_1-1)} , T, \cdots, T,\Id^{
			(2n+1-i_{k})}  )\circ(\mathfrak{X}_1, \cdots,
		\mathfrak{X}_{n}, x_{n+1})\\
	    & +\sum^{n}_{j=1}(-1)^{j}f( T(x_{1})\wedge T(y_{1}), \cdots, \mathfrak{\hat{X}}_j, \cdots,  T(x_{n})\wedge T(y_{n}), [T(x_j), T(y_j), T(x_{n+1})]_\mathfrak{g})\\
	   	&-\sum_{k=0}^{2n}\lambda^{2n-k}\sum_{1\leqslant i_1<i_2<\cdots< i_{k}\leqslant 2n+1} \sum^{n}_{j=1}(-1)^{j}T_M\circ f\circ   [ 2n-1,2n,2n+1 ]_\mathfrak{g}\circ (2j-1,\cdots,2n)^{-2}\circ \\
	   & \ \ \ \ \ \ \ \ \ \ \big(\Id^{(i_1-1)} ,  T , \Id^{ (i_2-i_1-1)} , T, \cdots, T,\Id^{  (2n+1-i_{k})}  \big)\Big(\mathfrak{X}_1, \cdots,  \mathfrak{X}_{n}, x_{n+1}\Big) \\
	    & + \sum^{n+1}_{j=1}(-1)^{j+1}\rho\Big(T(x_j), T(y_j)\Big)f\Big(T(x_1)\wedge T(y_1), \cdots, \mathfrak{\hat{X}}_j, \cdots, T(x_{n})\wedge T(y_{n}), T(x_{n+1})\Big)\\
	   	&-\sum_{k=0}^{2n-2}\lambda^{2n-2-k}\sum_{1\leqslant i_1<i_2<\cdots< i_{k}\leqslant 2n-1}   \rho\Big(T(x_j), T(y_j)\Big)\cdot T_M\circ f\circ\Big(\Id^{(i_1-1)} ,T,   \cdots, \Id^{  (2n-1-i_{k})}  \Big) \Big(\mathfrak{X}_1, \cdots, \mathfrak{\hat{X}}_j, \cdots, \mathfrak{X}_{n}, x_{n+1}\Big)   \\
	   	&-\sum^{n}_{j=1}(-1)^{j+1} T_M\Big( \rho\big(T(x_j),  y_j \big)f\big(T(x_1)\wedge T(y_1), \cdots, \mathfrak{\hat{X}}_j, \cdots, T(x_{n})\wedge T(y_{n}), T(x_{n+1})\big)\Big) \\
	   		&+\sum_{k=0}^{2n-2}\lambda^{2n-2-k}\sum_{1\leqslant i_1<i_2<\cdots< i_{k}\leqslant 2n-1} \sum^{n}_{j=1}(-1)^{j+1}\\
	   	& \ \ \ \  \ \ \ \  T_M\circ \Big( \rho\big(T(x_j), y_j\big)\cdot T_M\circ f\circ\big(\Id^{(i_1-1)} ,T,   \cdots, \Id^{  (2n-1-i_{k})}  \big) \big(\mathfrak{X}_1, \cdots, \mathfrak{\hat{X}}_j, \cdots, \mathfrak{X}_{n}, x_{n+1}\big)\Big)   \\
	   	&- \sum^{n}_{j=1}(-1)^{j+1}T_M\Big( \rho\big(x_j, T(y_j) \big)\cdot f\big(T(x_1)\wedge T(y_1), \cdots, \mathfrak{\hat{X}}_j, \cdots, T(x_{n})\wedge T(y_{n}), T(x_{n+1})\big)\Big) \\
	   	&+\sum_{k=0}^{2n-1}\lambda^{2n-2-k}\sum_{1\leqslant i_1<i_2<\cdots< i_{k}\leqslant 2n-1} \sum^{n}_{j=1}(-1)^{j+1}\\
	   	& T_M\circ\Big(  \rho\big(x_j, T(y_j)\big) \cdot \Big( T_M\circ f\circ \big(\Id^{(i_1-1)} ,T,   \cdots, \Id^{  (2n-1-i_{k-1})}  \big) \big(\mathfrak{X}_1, \cdots, \mathfrak{\hat{X}}_j, \cdots, \mathfrak{X}_{n}, x_{n+1}\big)\Big)  \Big) \\
	   	&+ \sum^{n}_{j=1}(-1)^{j+1}\lambda \ T_M \Big( \rho(x_j,  y_j )\cdot f\Big(T(x_1)\wedge T(y_1), \cdots, \mathfrak{\hat{X}}_j, \cdots, T(x_n)\wedge T(y_n), T(x_{n+1})\Big)\Big) \\
	   	&+\sum_{k=0}^{2n-2}\lambda^{2n -k-1}\sum_{1\leqslant i_1<i_2<\cdots< i_{k-1}\leqslant 2n-1} \sum^{n}_{j=1}(-1)^{j+1}\\
	   	& \ \ \ \ \ \ T_M\circ\Big( \rho(x_j, y_j )\cdot\Big( T_M\circ f\circ \big(\Id^{(i_1-1)} ,T,   \cdots, \Id^{  (2n-1-i_{k-1})}  \big) \big(\mathfrak{X}_1, \cdots, \mathfrak{\hat{X}}_j, \cdots, \mathfrak{X}_{n}, x_{n+1}\big)\Big)  \Big) \\
	   	&+  (-1)^{n+1}  \rho\big(T(y_n), T(x_{n+1})\big)\cdot f\Big(T(x_1)\wedge T(y_1), \cdots, T(x_{n-1})\wedge T(y_{n-1})\wedge T(x_{n})\Big)\\
	   	&+ (-1)^{n+1}\rho \big(T(x_{n+1}), T(x_n)\big)\cdot f\Big(T(x_1)\wedge T(y_1), \cdots, T(x_{n-1})\wedge T(y_{n-1})\wedge T(y_{n})\Big)   \\
	   	&- (-1)^{n+1} \sum_{k=0}^{2n-2}\lambda^{2n-2-k}\sum_{1\leqslant i_1<i_2<\cdots< i_{k}\leqslant 2n-1} \rho\big(T(y_n), T(x_{n+1})\big)\cdot\Big\{ T_M\circ f\circ\\
	   	&\big(\Id^{(i_1-1)} ,T,   \cdots, \Id^{  (2n-1-i_{k-1})}  \big)\big(\mathfrak{X}_1,  \cdots, \mathfrak{X}_{n-1}, x_{n}\big)\Big\}\\
	   	&- (-1)^{n+1} \sum_{k=0}^{2n-2}\lambda^{2n-2-k}\sum_{1\leqslant i_1<i_2<\cdots< i_{k}\leqslant 2n-1} \\
	   	&\rho \big(T(x_{n+1}), T(x_n)\big)\cdot\Big\{ T_M\circ f\Big(\Id^{(i_1-1)} ,T,   \cdots, \Id^{  (2n-1-i_{k})}  \Big)\Big(\mathfrak{X}_1,  \cdots, \mathfrak{X}_{n-1}, y_{n}\Big)\Big\}\\
	   	&- (-1)^{n+1} T_M \Big\{  \rho\big(T(y_n),  x_{n+1} \big)\cdot f\Big(T(x_1)\wedge T(y_1), \cdots, T(x_{n-1})\wedge T(y_{n-1})\wedge T(x_{n})\Big)\Big\} \\
	   	&- (-1)^{n+1}T_M \Big\{\big( \rho (T(x_{n+1}), x_n\big)\cdot f\Big(T(x_1)\wedge T(y_1), \cdots, T(x_{n-1})\wedge T(y_{n-1})\wedge T(y_{n})\Big)  \Big\}  \\
	   	&+(-1)^{n+1} \sum_{k=0}^{2n-2}\lambda^{2n-2-k}\sum_{1\leqslant i_1<i_2<\cdots< i_{k}\leqslant 2n-1} \\
	   	&T_M \Big\{ \rho\big(T(y_n),  x_{n+1} \big)\cdot \Big(T_M\circ f\circ \big(\Id^{(i_1-1)} ,T,   \cdots, \Id^{  (2n-1-i_{k})}  \big)\big(\mathfrak{X}_1,  \cdots, \mathfrak{X}_{n-1}, x_{n}\big)\Big)\Big\} \\
	   	&+(-1)^{n+1} \sum_{k=0}^{2n-2}\lambda^{2n-2-k}\sum_{1\leqslant i_1<i_2<\cdots< i_{k}\leqslant 2n-1} \\
	   	&T_M \Big\{ \rho \big(T(x_{n+1}), x_n \big)\cdot\Big( T_M\circ f\circ \big(\Id^{(i_1-1)} ,T,   \cdots, \Id^{  (2n-1-i_{k})}  \big) \big(\mathfrak{X}_1,  \cdots, \mathfrak{X}_{n-1}, y_{n}\big)\Big)\Big\}\\
	   		&- (-1)^{n+1} T_M \Big\{  \rho\big( y_n ,  T(x_{n+1}) \big) \cdot  f\Big(T(x_1)\wedge T(y_1), \cdots, T(x_{n-1})\wedge T(y_{n-1})\wedge T(x_{n})\Big)\Big\}  \\
	   	&- (-1)^{n+1}T_M \Big\{ \rho \big( x_{n+1} , T(x_n) \big)\cdot f\Big(T(x_1)\wedge T(y_1), \cdots, T(x_{n-1})\wedge T(y_{n-1})\wedge T(y_{n})\Big)  \Big\}  \\
	   	&+(-1)^{n+1} \sum_{k=0}^{2n-2}\lambda^{2n-2-k}\sum_{1\leqslant i_1<i_2<\cdots< i_{k}\leqslant 2n-1} \\
	   	&T_M \Big\{ \rho\big( y_n ,  T(x_{n+1}) \big)\cdot \Big(T_M\circ f\big (\Id^{(i_1-1)} ,T,   \cdots, \Id^{  (2n-1-i_{k})}  \big)\big(\mathfrak{X}_1,  \cdots, \mathfrak{X}_{n-1}, x_{n}\big)\Big)\Big\} \\
	   	&+(-1)^{n+1} \sum_{k=0}^{2n-2}\lambda^{2n-2-k}\sum_{1\leqslant i_1<i_2<\cdots< i_{k}\leqslant 2n-1} \\
	   	&T_M \Big\{ \rho \big( x_{n+1} , T(x_n) \big)\cdot\Big( T_M\circ f \circ \big(\Id^{(i_1-1)} ,T,   \cdots, \Id^{  (2n-1-i_{k})}  \big) \big(\mathfrak{X}_1,  \cdots, \mathfrak{X}_{n-1}, y_{n}\big)\Big)\Big\} \\
	   	&- (-1)^{n+1}\lambda \  T_M\Big\{  \rho\big( y_n ,   x_{n+1}  \big)\cdot f\Big(T(x_1)\wedge T(y_1), \cdots, T(x_{n-1})\wedge T(y_{n-1})\wedge T(x_{n})\Big)\Big\} \\
	   	&-  (-1)^{n+1}\lambda \ T_M \Big\{ \rho \big( x_{n+1} , x_n  \big)\cdot f\Big(T(x_1)\wedge T(y_1), \cdots, T(x_{n-1})\wedge T(y_{n-1})\wedge T(y_{n})\Big)  \Big\}  \\
	   	&+(-1)^{n+1} \sum_{k=0}^{2n-2}\lambda^{2n -k-1}\sum_{1\leqslant i_1<i_2<\cdots< i_{k}\leqslant 2n-2} \\
	   	&\ \ \ \ \ \  \ \ T_M \Big\{ \rho( y_n ,   x_{n+1}  )\cdot \Big(T_M\circ f\circ \big(\Id^{(i_1-1)} ,T,   \cdots, \Id^{  (2n-1-i_{k})}  \big) \big(\mathfrak{X}_1,  \cdots, \mathfrak{X}_{n-1}, x_{n}\big)\Big)\Big\} \\
	   	&+(-1)^{n+1} \sum_{k=0}^{2n-2}\lambda^{2n -k-1}\sum_{1\leqslant i_1<i_2<\cdots< i_{k}\leqslant 2n-2} \\
	   	&\  \ \ \ \ \ \ \ T_M \Big\{ \rho ( x_{n+1} ,  x_n  )\cdot \Big( T_M\circ f\circ \big(\Id^{(i_1-1)} ,T,   \cdots, \Id^{  (2n-1-i_{k})}  \big) \big(\mathfrak{X}_1,  \cdots, \mathfrak{X}_{n-1}, y_{n}\big)\Big)\Big\} \\
	   	=& \sum_{1\leq j< k\leq n}(-1)^{j}f\Big( T(x_1)\wedge T(y_1), \cdots, \mathfrak{\hat{X}}_j, \cdots, T(x_{k-1})\wedge T(y_{k-1}), [T(x_j), T(y_j), T(x_k)]_\mathfrak{g}\wedge T(y_k)\\
	   	&\ \ \ \ \ \ \ \ \ \ \ \ \ + T(x_k)\wedge [T(x_j), T(y_j), T(y_k)]_\mathfrak{g},  T(x_{k+1})\wedge T(y_{k+1}), \cdots,  T(x_{n})\wedge T(y_{n})\wedge T(x_{n+1})\Big)\\
	   	&-\sum_{k=0}^{2n}\lambda^{2n-k}\sum_{1\leqslant i_1<i_2<\cdots< i_{k}\leqslant 2n+1}\sum_{1\leq j< k\leq n}(-1)^{j}  T_M\circ f\circ  [2k-1,2k,2k+1]_\mathfrak{g}\circ    \\
	   &  \ \ \ \ \ \    \big(2j-1,\cdots,2k\big)^{-2}\circ\big(\Id^{(i_1-1)} ,  T , \Id^{ (i_2-i_1-1)} , T, \cdots, T,\Id^{  (2n+1-i_{k})}  \big)\Big(\mathfrak{X}_1, \cdots,  \mathfrak{X}_{n}, x_{n+1}\Big)\\
	   &-\sum_{k=0}^{2n}\lambda^{2n-k}\sum_{1\leqslant i_1<i_2<\cdots< i_{k}\leqslant 2n+1}\sum_{1\leq j< k\leq n}(-1)^{j}  T_M\circ f\circ      [ 2k,2k+1,2k+2 ]_\mathfrak{g}\circ (2j-1,\cdots,2k+1)^{-2}\circ  \\
	   &     \ \ \ \ \ \ \big(\Id^{(i_1-1)} ,  T , \Id^{ (i_2-i_1-1)} , T, \cdots, T,\Id^{  (2n+1-i_{k})}  \big)\Big(\mathfrak{X}_1, \cdots,  \mathfrak{X}_{n}, x_{n+1}\Big)\\
	   	&  +\sum^{n}_{j=1}(-1)^{j}f\Big( T(x_{1})\wedge T(y_{1}), \cdots, \mathfrak{\hat{X}}_j, \cdots,  T(x_{n})\wedge T(y_{n})\wedge  [T(x_j), T(y_j), T(x_{n+1})]_\mathfrak{g}\Big)\\
	   	&-\sum_{k=0}^{2n}\lambda^{2n-k}\sum_{1\leqslant i_1<i_2<\cdots< i_{k}\leqslant 2n+1}\sum_{1\leq j< k\leq n}(-1)^{j}  T_M\circ f\circ      [ 2k,2k+1,2k+2 ]_\mathfrak{g}\circ \big(2j-1,\cdots,2k+1\big)^{-2}\circ  \\
	   	&   \ \ \ \ \ \ \ \ \   \circ \big(\Id^{(i_1-1)} ,  T , \Id^{ (i_2-i_1-1)} , T, \cdots, T,\Id^{  (2n+1-i_{k})}  \big)\Big(\mathfrak{X}_1, \cdots,  \mathfrak{X}_{n}, x_{n+1}\Big)\\
        &+ \sum^{n}_{j=1}(-1)^{j+1}\rho\big(T(x_j), T(y_j)\big)\cdot f\Big(T(x_1)\wedge T(y_1), \cdots, \mathfrak{\hat{X}}_j, \cdots, T(x_n)\wedge T(y_n)\wedge T(x_{n+1})\Big)\\
       	&-\sum_{k=0}^{2n}\lambda^{2n-k}\sum_{1\leqslant i_1<i_2<\cdots< i_{k}\leqslant 2n+1} \sum^{n}_{j=1}(-1)^{j+1} T_M\circ(\rho\cdot f) \circ (1,\cdots, 2j)^2\circ \\
       &\  \ \ \ \ \ \ \ \ \ \ \  \big(\Id^{(i_1-1)} ,  T , \Id^{ (i_2-i_1-1)} , T, \cdots, T,\Id^{  (2n+1-i_{k})}  \big)\Big(\mathfrak{X}_1, \cdots,  \mathfrak{X}_{n}, x_{n+1}\Big) \\
       &-\sum_{k=0}^{2n}\lambda^{2n-k}\sum_{1\leqslant i_1<i_2<\cdots< i_{k}\leqslant 2n+1}(-1)^{n+1} T_M \circ(\rho\cdot f)\circ  \Big( (1,\cdots,2n+1)+(1,\cdots,2n+1)\circ (1,\cdots,2n-1)\Big) \circ\\
       & \  \ \ \ \ \ \ \ \ \ \ \Big(\Id^{(i_1-1)} ,  T , \Id^{ (i_2-i_1-1)} , T, \cdots, T,\Id^{  (2n+1-i_{k})}  \Big)\Big(\mathfrak{X}_1, \cdots,  \mathfrak{X}_{n}, x_{n+1}\Big)\\
          	&+ (-1)^{n+1}  \rho\big(T(y_n), T(x_{n+1})\big)\cdot f\Big(T(x_1)\wedge T(y_1), \cdots, T(x_{n-1})\wedge T(y_{n-1}), T(x_{n})\Big)\\
        &+ (-1)^{n+1}\rho \big(T(x_{n+1}), T(x_n)\big)\cdot f\Big(T(x_1)\wedge T(y_1), \cdots, T(x_{n-1})\wedge T(y_{n-1}), T(y_{n})\Big).
 	\end{align*}
 \end{small}  So we have $\partial^n\Phi^{n}=\Phi^{n+1}\delta^n$.
	\end{proof}

	\smallskip

\noindent
{{\bf Acknowledgments.} This work is supported in part by Natural Science Foundation of China (Grant Nos. 12071137, 12161013, 11971460) and  by   STCSM  (No. 22DZ2229014).

The authors are very grateful to the referee for his/her careful reading of this paper and for his most useful comments which improve much the presentation of this paper.

\smallskip
	
\noindent{\bf Data availability statement.} Data sharing not applicable to this article as no datasets were generated or analysed during
the current study.
\smallskip

\noindent{\bf Conflict of interest statement.} The authors declared that they have no conflicts of interest to this work.

\end{document}